\theoremstyle{plain}
\newtheorem{theorem}{Theorem}[section]
\newtheorem{definition}[theorem]{Definition}
\newtheorem{lemma}[theorem]{Lemma}
\newtheorem{corollary}[theorem]{Corollary}
\newtheorem{proposition}[theorem]{Proposition}
\newtheorem{hyp}[theorem]{Assumption}
\theoremstyle{remark}
\newtheorem{remark}[theorem]{Remark}
\def\C{{\mathbb C}}
\def\R{{\mathbb R}}
\def\N{{\mathbb N}}
\def\Z{{\mathbb Z}}
\def\({\left(}
\def\){\right)}
\def\<{\left\langle}
\def\>{\right\rangle}
\def\1{{\mathbf 1}}
\def\d{{\partial}}
\def\eps{\varepsilon}
\DeclareMathOperator{\RE}{Re}
\DeclareMathOperator{\IM}{Im}
\DeclareMathOperator{\diver}{div}
\numberwithin{equation}{section}
\date\today
\title[Existence for QMHD]{Existence and Stability of almost finite energy weak solutions to the Quantum Euler-Maxwell system}
\author[P.~Antonelli]{Paolo Antonelli}
\address[P.~Antonelli]{Gran Sasso Science Institute \\ via Crispi 7 \\ 67100 L'Aquila (Italy).}
\email{paolo.antonelli@gssi.it}
\author[P.~Marcati]{Pierangelo Marcati}
\address[P.~Marcati]{Gran Sasso Science Institute \\ via Crispi 7 \\ 67100 L'Aquila (Italy).}
\email{pierangelo.marcati@gssi.it}
\author[R.~Scandone]{Raffaele Scandone}
\address[R.~Scandone]{Gran Sasso Science Institute \\ via Crispi 7 \\ 67100 L'Aquila (Italy).}
\email{raffaele.scandone@gssi.it}
\begin{document}


\begin{abstract}
We prove the existence of global in time, finite energy, weak solutions to a quantum magnetohydrodynamic system (QMHD) with large data, modeling a charged quantum fluid interacting with a self-generated electromagnetic field. The analysis of QMHD relies upon the use of Madelung transformations. The rigorous derivation requires non-trivial smoothing estimates, which are obtained by assuming slightly higher regularity for the electromagnetic potential.
These assumptions are motivated by the nonlinear dependence of the hydrodynamic system in terms of the underlying wave function dynamics, which is supercritical with respect to the bare energy bounds. 
\newline
Due to quantum effects on the dispersive properties of QMHD, our approach requires neither smallness nor high regularity, unlike a large amount of existing literature for Euler-Maxwell's classical system. In fact, the difficulty posed by the presence of the nonlinear electromagnetic force field (Lorentz) severely restricts the possibility to get existence and stability results in the general framework of finite energy solutions. In the classical case the dispersion is not able to deal with the transport of a non-trivial vorticity, therefore almost GWP holds in a life span, reciprocal of the amplitude of the vorticity. GWP can be proved in the irrotational case, where in any case  smallness and high regularity assumptions are needed. 
\newline
For quantum MHD system the irrotationality and the presence of a highly nonlinear quantum stress tensor induce much stronger dispersive properties, as a byproduct of a close relationship with the classical Maxwell-Schr\"odinger system. 
Therefore the core argument is shifted to the analysis of the nonlinearities related to the formulation of the hydrodynamic variables through the Madelung transformations. The analysis carried out in section 4 shows that it is necessary to go through non-trivial smoothing estimates and these require us to assume regularity conditions, just above the energy norms, for the initial data of the Maxwellian electromagnetic potential.
In the same regime of regularity, with the help of suitable local smoothing estimates, we also prove stability of both the hydrodynamic variables and the Lorentz force associated with the electromagnetic field.
\end{abstract}

\subjclass[2020]{35Q40, 35Q35, 76Y05, 82D10}

\keywords{Quantum hydrodynamics, Quantum MHD, Maxwell-Schr\"odinger, Madelung transformation, Finite-energy weak solutions, Large data}

\maketitle
\section{Introduction and main result}
\subsection{Problem setup}
In this paper we consider the Cauchy problem associated to the following quantum magnetohydrodynamic (QMHD) system in three space dimensions
\begin{equation}\label{eq:qmhd}
\begin{cases}
\partial_t\rho+\diver J=0\\
\partial_t J+\diver\Big(\frac{J\otimes J}{\rho}\Big)+\nabla P(\rho)=\rho E+J\wedge B+\frac{1}{2}\rho\nabla\Big(\frac{\Delta\sqrt{\rho}}{\sqrt{\rho}}\Big)\\
\diver E=\rho,\quad\nabla\wedge E=-\partial_t B\\
\diver B=0,\quad\nabla\wedge B=J+\partial_t E,
\end{cases}
\end{equation}
with prescribed initial data
\begin{equation}\label{eq:qmhd_id}
\rho_{|t=0}=\rho_0, \;J_{|t=0}=J_0,\; E_{|t=0}=E_0, \;B_{|t=0}=B_0.
\end{equation}
The unknowns $\rho, J$ denote the charge and current densitites, whereas $E, B$ are the electric and magnetic fields. 
The pressure term $P=P(\rho)$ is assumed to be barotropic and for simplicity it satisfies a $\gamma-$law, i.e.~ $P(\rho)=\frac{\gamma-1}{\gamma}\rho^{\gamma}$, with $\gamma\in(1, 3)$. 
The equation for the current density presents two contributions on its right hand side, the former one given by the Lorentz force
\begin{equation}\label{eq:lor}
F_L:=\rho E+J\wedge B,
\end{equation}
and the latter being the quantum pressure term, given by the third order nonlinear operator in $\rho$.
The QMHD system \eqref{eq:qmhd} bears no dissipative or relaxation effects and indeed its total energy, given by
\begin{equation}\label{eq:en}
\mathcal E(t)=\int_{\R^3}\frac12|\nabla\sqrt{\rho}|^2+\frac12\frac{|J|^2}{\rho}+f(\rho)+\frac12|E|^2+\frac12|B|^2\,dx,
\end{equation}
is formally conserved along the flow of solutions to \eqref{eq:qmhd}. Here the internal energy $f(\rho)$ is determined by the pressure, according to the formula $f(\rho)=\rho\int_0^\rho P(s)/s^2\,ds$.
\newline
The QMHD system \eqref{eq:qmhd} describes a (positively) charged quantum fluid interacting with its self-generated electromagnetic field. It is a prototypical model for a quantum plasma, arising for instance in the description of dense astrophysical objects such as white dwarf stars \cite{SE3}. 
In particular, the introduction of the quantum term is motivated by the fact that in such contexts the thermal de Broglie wavelength becomes comparable (larger than or equal) to the typical interatomic distance \cite{NP}. 
Unlike classical fluid dynamics, in this framework the term $P(\rho)$ describes the quantum statistical pressure, which is not of thermal origin (hence does not depend on the temperature) but comes from the electron degeneracy, due to Heisenberg's uncertainty principle and Pauli's exclusion principle \cite{SE3}. For instance, for non-relativistic degenerate electron gases, the quantum statistical pressure in the zero temperature limit is given by $P(\rho)\sim\rho^{(d+2)/d}$ \cite{CHM, MH}, where $d$ denotes the space dimension.
A hydrodynamic model of this type has been used by Feynman \cite{Fey1,Fey2} in the study of superconductivity. In fact, in superconductors the electrons in the ground state come in pairs with opposite spin. Since the resulting spin is zero, the ``particle" obeys Bose statistics. Just as Maxwell's equations describe the motion of many photons in the same state, the QMHD system are macroscopic equations for many superconducting ``particles" in the same state.

\subsection{Comparison with similar compressible fluid models}
If the quantum term is neglected in the equation for the current density, then system \eqref{eq:qmhd} reduces to the well known Euler-Maxwell system for ions \cite[Chap. 9]{Bitt}, that shares several mathematical difficulties with its quantum counterpart.
For instance, due to the absence of dissipative terms in both systems, the a priori bounds derived from the physical entropies are in general not sufficient to prove the existence of global in time weak solutions (see also \cite{UWK} where a damping term allows the authors to study the system by the energy method). 
In order to obtain suitable estimates, we thus need to exploit other physical effects encoded in the system, such as dispersion.
This idea has been carefully exploited by Germain and Masmoudi in \cite{GM} for the classical Euler-Maxwell model. In \cite{GM} the global well-posedness was proved by combining higher order energy bounds and dispersive estimates (yielded by an underlying system of Klein-Gordon equations). 
A similar strategy was previously used to study the Euler-Poisson system by Guo \cite{G} and Guo and Pausader \cite{GP}, and was further implemented in more complex models, such as the two-fluid Euler-Maxwell system \cite{GIP}.
Let us mention that all such results are concerned with irrotational solutions. 
This restriction allows to eliminate the normal mode related to the vorticity of the fluid, that is only transported and hence it does not decay.
For non-irrotational solutions (i.e., solutions with vorticity), then it is not possible to obtain a global well-posedness result, see for instance \cite{IL} where their lifespan is inversely proportional to the size of the initial vorticity. Similar result is analysed in the two dimensional case in \cite{Zheng}.
\newline
However, all those previous results \cite{G, GP, GM, GIP} require a smallness assumption on the initial data, that is actually necessary since large perturbations of the equilibrium solution may produce a breakdown of regularity at finite time \cite{GTZ}, in the same spirit of the seminal paper by Sideris \cite{Sid}.
\newline 
Let us notice that the presence of the quantum term in \eqref{eq:qmhd} modifies the dispersion relation for the irrotational part of the flow, that is now given by
\begin{equation*}
\omega(\xi)=\left(c_s^2|\xi|^2+1+\frac14|\xi|^4\right)^{1/2},
\end{equation*}
where $c_s=\sqrt{P'(1)}$ is the speed of sound waves.
The reader should compare this formula with the Klein-Gordon type dispersion relation satisfied by system (1.3) in \cite{GM}, where the quartic part is missing.
The quantum term thus yields better dispersive properties as compared to the classical Euler-Maxwell system, see for instance \cite{AHM, AHM_Brix, H_2D} where such a property was exploited to study acoustic oscillations for quantum fluid systems, in the low Mach number regime. 
We remark that a similar dispersion relation also describes the excitations in a Bose condensed gas, as derived in the seminal paper by Bogoliubov \cite{Bog}, see also \cite{BBCS}.
\newline
However, this augmented dispersion is still not sufficient to yield satisfactory a priori bounds and prove the existence of global solutions for arbitrary initial data. Following a strategy as in \cite{GM} and other aforementioned papers, would not allow us to go beyond an analogue global existence result, which considers initial data that are small regular perturbations of stationary solutions.
\subsection{Quantum fluid models and the Madelung transform}
Our aim is to prove the existence of global in time, finite energy weak solutions to \eqref{eq:qmhd}, without restrictions on the size or any higher assumptions on the initial data. In particular we shall not require the charge density to be a (small and regular) perturbation of a constant.
\newline
Our approach exploits the correspondence between the QMHD system \eqref{eq:qmhd} and a wave function dynamics provided by the nonlinear Maxwell-Schr\"odinger system (see \eqref{eq:clas_MS} below), by means of the so called Madelung transformations \cite{Mad}. 
This has been a well-established and successful strategy, see for instance \cite{AM_CMP, AM_2D} where a class of quantum fluid models was studied by exploiting their analogy with an underlying nonlinear Schr\"odinger-Poisson system. 
More precisely, given a finite energy solution $\psi$ to the wave function dynamics, then its associated momenta, defined by $\rho=|\psi|^2$, $J=\IM(\bar\psi\nabla\psi)$ solve the quantum hydrodynamics (QHD) system in the weak sense.
This correspondence, although formally obtained by Madelung in \cite{Mad}, is rigorously achieved by using the polar factorization method \cite{AM_CMP, AM_2D}, that overcomes the mathematical difficulty of defining the velocity field in the vacuum region -- see Section \ref{sse:polar} for more details. 
\newline
For the QMHD system \eqref{eq:qmhd},  the underlying wave function dynamics is determined by the following nonlinear Maxwell-Schr\"odinger system
\begin{equation}\label{eq:clas_MS}
	\begin{cases}
		i\partial_t \psi = -\frac{1}{2}\Delta_A\psi+\phi \psi+|\psi|^{2(\gamma-1)}\psi\\
		-\Delta\phi-\partial_t\diver A=\rho\\
		\square A+\nabla(\partial_t\phi+\diver A)=J,
	\end{cases}
\end{equation}
where the source terms appearing on the right hand side of the Maxwell's equations are again determined by means of the Madelung transformation,
\begin{equation}\label{eq:mad}
	\rho=|\psi|^2, \quad
	J=\RE(\bar\psi(-i\nabla-A)\psi)
\end{equation}
and $-\Delta_A:=(-i\nabla-A)^2$ is the magnetic (positive definite) Laplacian. 
\newline
Let us notice how the Madelung transformations in \eqref{eq:mad} are modified according to the presence of an electromagnetic potential. 
The electromagnetic fields $(E, B)$ are determined from the potentials $(\phi, A)$ through the usual formulas
\begin{equation}\label{eq:em_f}
E=-\nabla\phi-\d_tA, \quad B=\nabla\wedge A.
\end{equation}
System \eqref{eq:clas_MS} is invariant under the following gauge transformation
\begin{equation}\label{eq:gauge}
	(\psi,\phi,A)\mapsto (e^{i\lambda}\psi,\phi-\partial_t\lambda,A+\nabla\lambda).
\end{equation}
It is straightforward to see that the same transformation leaves invariant also the definition of the hydrodynamic quantities $(\rho, J, E, B)$, so that the choice of the gauge for \eqref{eq:clas_MS} does not affect the study of \eqref{eq:qmhd}. For our convenience here we consider the Coulomb gauge, i.e.~$\diver A=0$, under which the nonlinear Maxwell-Schr\"odinger system becomes
\begin{equation}\label{eq:MS}
	\begin{cases}
		i\partial_t \psi = -\frac{1}{2}\Delta_A\psi+\phi \psi+|\psi|^{2(\gamma-1)}\psi\\
		\square A=\mathbb{P}J\\
		\diver A=0,
	\end{cases}\quad t\in\R,x\in\R^3,
\end{equation}
where now $\phi:=(-\Delta)^{-1}\rho$  and $\mathbb{P}:=\mathbb{I}-\nabla\diver\Delta^{-1}$ denotes the Helmholtz projection onto solenoidal vector fields. 
\newline
Unfortunately, the approach in \cite{AM_CMP, AM_2D} cannot be applied here in a straightforward way. Indeed, it is not possible to prove any stability property for finite energy solutions to \eqref{eq:MS}. This issue is already thoroughly discussed in the literature, see for instance \cite{AMS, Geyer}, as it seems that there is some technical obstructions in proving a well-posedness result below the $H^{11/8}$ regularity framework for the wave function.
\newline 
For this reason, here we adopt a different strategy. We avoid  to pass through an approximation argument and we rather exploit the integral formulation associated to \eqref{eq:MS}. More precisely, instead of trying to derive the differential equations in \eqref{eq:qmhd} for regular solutions and then passing to the limit into their weak formulation, we directly consider a (finite energy) weak solution to \eqref{eq:MS}, as given by the corresponding Duhamel's formula, and we directly prove that the hydrodynamical quantities defined in \eqref{eq:mad} satisfy the weak formulation associated to \eqref{eq:qmhd}, see the Definition \ref{def:fews} below.
This alternative approach was already introduced in \cite{A_PAMS} in the framework of QHD systems and is somehow similar to the idea used in \cite{Ozawa-2006} in order to prove the conservation in time of the physical quantities associated to the NLS equation, see also \cite{FM}. 
\subsection{Defining the Lorentz force}
In order to rigorously justify all steps and give meaning to all terms in \eqref{eq:qmhd}, some a priori estimates are necessary. Indeed, the self-consistent electromagnetic forces acting on the quantum plasma, determined through Madelung transformations, are a priori not well defined. 
In particular, for weak solutions to \eqref{eq:MS} which are simply in the energy space, we are unable to prove that the Lorentz force \eqref{eq:lor} is locally integrable. 
Our strategy for overcoming these mathematical difficulties
combines hypotheses of extra regularity on initial electromagnetic fields, with
suitable dispersive-type estimates for the weak solutions of \eqref{eq:MS}.
Higher regularity for the electromagnetic fields also plays a role in the analysis of the incompressible Navier-Stokes-Maxwell system done by \cite{Masmoudi, AG, ARIMA} or the incompressible Euler-Maxwell system in \cite{AH}, in order to analyze the Lorentz force provided by the Ohm's law. 
However, we underline that our QMHD \eqref{eq:qmhd} system is irrotational and compressible, therefore it presents several further difficulties in order to deriving the hydrodynamic equations rigorously, even in a weak sense, therefore requires new non-trivial a priori bounds. 
\subsection{Main results}
Before stating our main results, we introduce some notations.
Given $s,\sigma\geq1$, we define the spaces
\begin{gather*}\Sigma^{\sigma}:=\{(A_0,A_1)\in H^{\sigma}(\R^3;\R^3)\times H^{\sigma-1}(\R^3;\R^3)\,s.t.\,\diver A_0=\diver A_1=0\},
\end{gather*}
and
\begin{equation}\label{eq:M_ss}
M^{s,\sigma}:=H^s(\R^3)\times\Sigma^{\sigma}.
\end{equation} 
Moreover, we say that the initial datum $(\rho_0,J_0,E_0,B_0)$ is $(s,\sigma)$-admissible if there exists $(\psi_0,A_0,A_1)\in M^{s,\sigma}$ such that
\begin{equation}\label{eq:ssigma_id}
\rho_0=|\psi_0|^2,\, J_0:=\RE\big(\overline{\psi}_0(-i\nabla-A_0)\psi_0\big),\, E_0=-A_1-\nabla\phi_0,\, B_0:=\nabla\wedge A_0.
\end{equation}
In order to rigorously address arbitrary weak solutions for quantum fluid models,
it turns out that the right hydrodynamic quantities are given by  $\sqrt{\rho}$ and $\Lambda$, where $\Lambda$ is such that $J=\sqrt{\rho}\Lambda$, see Section \ref{sse:polar} and the references \cite{AM_CMP, AM_2D, AM_B, AMZ} for a more exhaustive discussion. The formulation of the problem in terms of $\sqrt{\rho}$ and $\Lambda$ has the advantage of considering well-defined objects for finite energy weak solutions. Furthermore, they can be defined directly, by polar factorization from a given finite energy wave function. 
\begin{theorem}\label{th:main}
Let $\gamma\in(1,3)$, $\sigma>1$, and let $(\rho_0,J_0,E_0,B_0)$ be an $(1,\sigma)$-admissible initial data. Then there exists a global in time, finite energy weak solution $(\rho,J,E,B)$ to the QMHD system \eqref{eq:qmhd}, in the sense of Definition \ref{def:fews}. 
\end{theorem}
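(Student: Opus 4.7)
The plan is to avoid a direct approximation scheme for \eqref{eq:qmhd} and instead to exploit the correspondence with the Maxwell--Schr\"odinger system \eqref{eq:MS} through the Madelung transformations \eqref{eq:mad}, in the spirit of \cite{A_PAMS}. Given an $(1,\sigma)$-admissible initial datum, I would first lift it to a triple $(\psi_0,A_0,A_1)\in M^{1,\sigma}$ as in \eqref{eq:ssigma_id}. Since $\sigma>1$ lies strictly above the energy threshold for the electromagnetic component of \eqref{eq:MS}, a global Cauchy theory for the nonlinear Maxwell--Schr\"odinger system is available just above the $H^{11/8}$ obstruction recalled in the introduction, and yields a global wave function solution $(\psi,A)$ described by Duhamel's formula in the Coulomb gauge.

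Next, the hydrodynamic unknowns are defined directly from $(\psi,A)$: set $\rho=|\psi|^2$, and use the polar factorization of Section~\ref{sse:polar} to obtain a unimodular factor $\varphi$ with $\psi=\sqrt{\rho}\,\varphi$, so that $\Lambda:=\RE\bigl(\bar\varphi(-i\nabla-A)\psi\bigr)$ satisfies $J=\sqrt{\rho}\,\Lambda$, while $E$ and $B$ are recovered via \eqref{eq:em_f}. Conservation of the Maxwell--Schr\"odinger energy together with the pointwise identity $|\nabla\sqrt{\rho}|^2+|\Lambda|^2=|(-i\nabla-A)\psi|^2$ coming from polar factorization then yields the bound \eqref{eq:en} uniformly in time, so that $(\rho,J,E,B)$ is a candidate finite-energy object.

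The heart of the argument is to check that $(\rho,J,E,B)$ satisfies \eqref{eq:qmhd} in the sense of Definition~\ref{def:fews}. The continuity equation, Gauss laws and Maxwell evolution equations follow by testing the Schr\"odinger equation for $\psi$ and the wave equation $\square A=\mathbb{P}J$ against smooth, compactly supported functions. The momentum balance is more delicate: one has to recognize that the sum of the convective flux $J\otimes J/\rho$ and the Bohm quantum stress $\tfrac12\rho\nabla(\Delta\sqrt{\rho}/\sqrt{\rho})$ can be rewritten in terms of $\nabla\sqrt{\rho}\otimes\nabla\sqrt{\rho}$ and $\Lambda\otimes\Lambda$, thereby making sense at the energy level, and one has to extract the Lorentz source $\rho E+J\wedge B$ from the coupling of the Schr\"odinger equation with the magnetic potential $A$. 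Testing the Duhamel identity for $\psi$ against vector-valued test functions, and handling the vacuum set $\{\rho=0\}$ through the polar factor $\varphi$, should produce the desired weak momentum formulation.

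The main obstacle, and the reason for the regularity assumption $\sigma>1$, is that at the bare energy level neither $\rho E$ nor $J\wedge B$ is a priori locally integrable: one only has $\rho\in L^\infty_tL^1_x\cap L^\infty_tL^\gamma_x$, $J\in L^\infty_tL^1_x$ and $E,B\in L^\infty_tL^2_x$. To give $F_L$ a meaning I would prove local smoothing and Strichartz-type estimates for the magnetic Schr\"odinger propagator associated to $A\in H^\sigma$ with $\sigma>1$, together with wave-type smoothing for $\square A=\mathbb{P}J$ driven by the current $\mathbb{P}J$. These estimates, which form the technical heart of the paper, upgrade the integrability of $\nabla\psi$ and $A$ just enough to make $F_L$ locally integrable and to justify every step leading to the weak momentum equation. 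Once this dispersive toolbox is in place, Theorem~\ref{th:main} follows.
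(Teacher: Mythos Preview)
Your overall strategy matches the paper's: lift the admissible data to $(\psi_0,A_0,A_1)\in M^{1,\sigma}$, take a global solution $(\psi,A)$ of the nonlinear Maxwell--Schr\"odinger system \eqref{eq:MS}, define $(\sqrt{\rho},\Lambda,E,B)$ by polar factorization and \eqref{eq:em_f}, and then verify each item of Definition~\ref{def:fews} directly from the Duhamel formulation, with the extra regularity $\sigma>1$ ensuring $F_L\in L^1_{\mathrm{loc}}$.

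There is, however, a genuine confusion in your second paragraph. The assumption $\sigma>1$ concerns only the magnetic potential; the wave function datum is merely $\psi_0\in H^1$, and the $H^{11/8}$ obstruction for $\psi$ is \emph{not} overcome. The paper does not invoke any well-posedness theory for \eqref{eq:MS}; it relies on the existence of a global \emph{weak} $M^{1,1}$-solution (Proposition~\ref{pr:exMS}), which need not be unique. This is precisely why the paper works directly with the integral (Duhamel) identity rather than passing through smooth approximations. A second, smaller point: the local smoothing estimates you mention are used in the paper only for the stability result (Theorem~\ref{th:stability}); for the existence theorem the relevant dispersive input is the Koch--Tzvetkov type bound of Proposition~\ref{th:apriori}, which gives $\psi\in L^2_TW^{1/2,6}$ and $A\in L^\infty_TH^\sigma\cap L^2_TL^\infty$ and suffices both to place $F_L$ in $L^2_TL^1$ (Proposition~\ref{pr:wdlf}) and to justify the Fubini-type manipulations in the derivation of the continuity and momentum equations (Section~\ref{sec:der}).
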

\begin{remark}
Let us remark that the critical case $\gamma=3$ in the equation of state for the pressure term requires a finer analysis, due to the necessary a priori estimates to rigorously justify the derivation of fluid dynamical equations. For the sake of clarity in our presentation, we omit here this case and postpone it to a forthcoming paper.
\end{remark}

As it will be clear from the proof of Theorem \ref{th:main}, also the solutions constructed here satisfy a (generalized version of) irrotationality condition, see \eqref{eq:gen_irr} and Remark \ref{rmk:gic}. In this sense such solutions belong to the same framework as those ones studied in \cite{G, GP, GM, GIP} and related papers. 
Nonetheless, we remark that our solutions do carry some vorticity, as quantized vortices are allowed in the vacuum region $\{\rho=0\}$, see \cite{Donn}.
For a more precise discussion on those aspects we refer to Remark \ref{rmk:gic} below.
\newline
For QHD systems (i.e.~system \eqref{eq:qmhd} with no magnetic fields), the well-posedness of the underlying NLS equation, combined with the stability of polar factorization, implies a stability property for a class of finite energy weak solutions to the QHD system, see also \cite{AMZ, AMZ_2D} for more general results that do not rely on the underlying wave function dynamics.
\newline
Due to the strategy of proof adopted for Theorem \ref{th:main}, it is straightforward to see that such stability properties cannot be deduced for the solutions constructed here. However, by exploiting some further delicate smoothing estimates for \eqref{eq:MS}, we can prove the following result.
\begin{theorem}\label{th:stability}
Let $\gamma\in(1, 3)$, $\sigma>1$ and let 
$\{(\psi_0^{(n)},A_0^{(n)},A_1^{(n)})\}\subset M^{1, \sigma}$ be uniformly bounded. Let $(\rho^{(n)}_0,J^{(n)}_0,E^{(n)}_0,B^{(n)}_0)$ be defined by $(\psi_0^{(n)},A_0^{(n)},A_1^{(n)})$ according to identities \eqref{eq:mad}, \eqref{eq:em_f} and let $\big(\rho^{(n)},J^{(n)},E^{(n)},B^{(n)}\big)$ be the weak solution to \eqref{eq:qmhd} with initial data $(\rho^{(n)}_0,J^{(n)}_0,E^{(n)}_0,B^{(n)}_0)$, as constructed in Theorem \ref{th:main}.
\newline
Then there exists a subsequence, not relabeled, and a global in time, finite energy weak solution $(\rho,J,E,B)$ to \eqref{eq:qmhd} such that:
	\begin{itemize}
		\item[(i)] for every $T>0$, $\big(\sqrt{\rho}^{(n)},\Lambda^{(n)},E^{(n)},B^{(n)}\big)$ converges to $(\sqrt{\rho},\Lambda,E,B)$ weak* in $L^{\infty}\Big((0,T);H^1(\R^3)\times \big(L^2(\R^3)\big)^3\Big)$;
		\item[(ii)] by defining the Lorentz force associated to $\big(\rho^{(n)},J^{(n)},E^{(n)},B^{(n)}\big)$ as $F_L^{(n)}:=\rho^{(n)} E^{(n)}+J^{(n)}\wedge B^{(n)}$, then we have $F_L^{(n)}\rightarrow F_L$ in $L^1_{\mathrm{loc}}(\R_t^+\times \R_x^3)$.
	\end{itemize}
\end{theorem}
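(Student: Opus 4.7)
The plan is to work at the level of the underlying Maxwell--Schr\"odinger system \eqref{eq:MS}, from which the QMHD solutions in Theorem \ref{th:main} are built. Let $(\psi^{(n)}, A^{(n)})$ be the sequence of finite energy solutions to \eqref{eq:MS} with initial data $(\psi_0^{(n)}, A_0^{(n)}, A_1^{(n)})$ that produces $(\rho^{(n)}, J^{(n)}, E^{(n)}, B^{(n)})$ via \eqref{eq:mad}--\eqref{eq:em_f}. Conservation of mass and energy, together with the a priori bounds and local smoothing estimates from Section~4, give uniform control of $\psi^{(n)}$ in $L^{\infty}_t H^1_x$ (plus the auxiliary smoothing norms of Section~4) and of $(A^{(n)}, \partial_t A^{(n)})$ in $L^{\infty}_t \Sigma^{\sigma}$.

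The next step is extraction of limits. By Banach--Alaoglu, pass to a subsequence such that $\psi^{(n)} \rightharpoonup^{\ast} \psi$ in $L^{\infty}_t H^1_x$ and $(A^{(n)}, \partial_t A^{(n)}) \rightharpoonup^{\ast} (A, \partial_t A)$ in $L^{\infty}_t \Sigma^{\sigma}$. Using \eqref{eq:MS} one obtains uniform bounds on $\partial_t \psi^{(n)}$ and $\partial_t^2 A^{(n)}$ in negative-order Sobolev spaces, so an Aubin--Lions argument yields $A^{(n)} \to A$ strongly in $C([0,T]; H^{\sigma'}_{\mathrm{loc}})$ for every $\sigma' < \sigma$, and $\psi^{(n)} \to \psi$ strongly in $C([0,T]; L^2_{\mathrm{loc}})$. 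Combined with the smoothing estimates, this is enough to pass to the limit in all the nonlinearities of \eqref{eq:MS}: the Hartree term $\phi^{(n)} \psi^{(n)}$, the defocusing power $|\psi^{(n)}|^{2(\gamma - 1)} \psi^{(n)}$ for $\gamma \in (1,3)$, the magnetic couplings $A^{(n)} \cdot \nabla \psi^{(n)}$ and $|A^{(n)}|^2 \psi^{(n)}$, and the source $\mathbb{P} J^{(n)}$. Hence $(\psi, A)$ is itself a finite energy weak solution to \eqref{eq:MS}, and its hydrodynamic quantities $(\sqrt{\rho}, \Lambda, E, B)$ obtained by polar factorization form a global finite energy weak solution to \eqref{eq:qmhd}. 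Stability of the polar factorization produces the weak-$\ast$ convergences claimed in (i).

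For (ii), split $F_L^{(n)} = \rho^{(n)} E^{(n)} + J^{(n)} \wedge B^{(n)}$. Since $\sigma > 1$, the magnetic field $B^{(n)} = \nabla \wedge A^{(n)}$ is uniformly bounded in $L^{\infty}_t H^{\sigma - 1}_x$ with $\sigma - 1 > 0$; paired with $\partial_t B^{(n)} = -\nabla \wedge E^{(n)} \in L^{\infty}_t H^{-1}_x$, Aubin--Lions gives $B^{(n)} \to B$ strongly in $L^2_t L^2_{\mathrm{loc}}$. Together with $J^{(n)} = \sqrt{\rho^{(n)}}\,\Lambda^{(n)}$ uniformly in $L^{\infty}_t L^{3/2}_x$ and locally compact via the wave function dynamics, this yields $J^{(n)} \wedge B^{(n)} \to J \wedge B$ in $L^1_{\mathrm{loc}}$. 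For the electric part, write in the Coulomb gauge $E^{(n)} = -\nabla \phi^{(n)} - \partial_t A^{(n)}$: the piece $\nabla \phi^{(n)} = -\nabla(-\Delta)^{-1} \rho^{(n)}$ is locally compact thanks to the strong local convergence of $\rho^{(n)}$ and the continuity of Riesz-type operators, while $\partial_t A^{(n)} \to \partial_t A$ strongly in $L^2_t H^{\sigma' - 1}_{\mathrm{loc}}$ for every $\sigma' < \sigma$. Paired with $\rho^{(n)}$ uniformly bounded in $L^{\infty}_t L^3_x$, this gives $\rho^{(n)} E^{(n)} \to \rho E$ in $L^1_{\mathrm{loc}}$.

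The main obstacle lies in Step~3 and the second half of Step~4, where the limited regularity of the energy space makes the treatment of the mixed terms borderline: $\nabla \psi^{(n)}$ is only weakly convergent in $L^2$, and $E^{(n)}$ is a priori only in $L^{\infty}_t L^2_x$, so every product identification against another weakly converging factor is delicate. The hypothesis $\sigma > 1$ is used precisely here, as it promotes $A^{(n)}$ and $\partial_t A^{(n)}$ to locally compact sequences one derivative above the energy, just enough to close the magnetic couplings in \eqref{eq:MS} and to render $E^{(n)}$ locally compact after Helmholtz decomposition. The local smoothing estimates of Section~4 enter in tandem, upgrading the compactness of $\psi^{(n)}$ to the level required to pass to the limit in $A^{(n)} \cdot \nabla \psi^{(n)}$ and in the nonlinear momentum flux, where a naive Aubin--Lions bound would fall short.
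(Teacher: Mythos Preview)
Your overall strategy matches the paper's: work at the Maxwell--Schr\"odinger level, obtain uniform bounds from energy plus the a priori and local smoothing estimates, extract a weak limit $(\psi,A)$ that solves \eqref{eq:MS}, and then read off the hydrodynamic convergences. The treatment of part~(ii) is close to the paper's (they use slightly different exponents, exploiting the $L_T^2W^{1/2,6}$ bound on $\psi^{(n)}$ to handle $J^{(n)}\wedge B^{(n)}$, but your Helmholtz splitting of $E^{(n)}$ and Aubin--Lions argument for $B^{(n)}$ are the same in spirit).

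There is, however, a genuine gap in your argument for~(i). You obtain from a naive Aubin--Lions step only $\psi^{(n)}\to\psi$ strongly in $C([0,T];L^2_{\mathrm{loc}})$, and then write that ``stability of the polar factorization produces the weak-$\ast$ convergences claimed in~(i).'' This step fails as written: the map $\psi\mapsto\Lambda=\RE(\bar\varphi\nabla_A\psi)$ is \emph{not} stable under weak $H^1$ convergence of $\psi$. The paper gives the explicit counterexample $\psi_n=n^{-1}e^{inx_1}$, for which $\psi_n\rightharpoonup 0$ in $H^1$ but $\Lambda_n\equiv\mathbf e_1$. Lemma~\ref{le:polar} requires \emph{strong} $H^1$ convergence of $\psi_n$ to conclude $\Lambda_n\to\Lambda$ in $L^2$.

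What the paper actually does---and what your last paragraph gestures at but never connects to~(i)---is use the local smoothing estimate of Proposition~\ref{pr:smoot_MS} (this is Section~\ref{sec:ls}, not Section~4), which gives a uniform bound on $\chi_\alpha\psi^{(n)}$ in $\ell_\alpha^\infty L_T^2H^{1+\delta}$ for some $\delta>0$. Interpolating with the $L_T^\infty H^1$ bound and applying Aubin--Lions with the \emph{compact} embedding $H^{1+\delta}(\Omega)\hookrightarrow H^1(\Omega)$ (and $\partial_t\psi^{(n)}\in L_T^\infty H^{-1}$) yields $\psi^{(n)}\to\psi$ \emph{strongly in $L_T^4H^1(\Omega)$}. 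Only at this point is Lemma~\ref{le:polar} applicable, producing $\sqrt{\rho^{(n)}}\to\sqrt\rho$ in $L_T^4H^1(\Omega)$ and $\Lambda^{(n)}\to\Lambda$ in $L_T^4L^2(\Omega)$; the weak-$\ast$ convergences then follow from uniform boundedness. You should make this chain explicit: the local smoothing is not merely an auxiliary tool for the nonlinearities in \eqref{eq:MS}, it is the sole mechanism that upgrades compactness of $\psi^{(n)}$ to the strong $H^1_{\mathrm{loc}}$ level that the polar factorization requires.
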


A key tool in the proof of Theorem \ref{th:stability} will be the local-smoothing estimate in Proposition \ref{pr:smoot_MS} below, which provides sufficient compactness in order to guarantee the weak stability of the hydrodynamic variables. We point out that also the stability of the Lorentz force heavily relies on the extra-regularity condition $\sigma>1$. As mentioned before, an analogous situation occurs in classical magnetohydrodynamic system. In this perspective we mention the paper \cite{ARIMA} on the derivation of the MHD system, where the authors show that even the compensated compactness method fails to prove the weak stability of the Lorentz force in the energy space.

\subsection{Outline of the paper}
The paper is organized as follows. In Section \ref{sec:prel} we introduce some preliminary results we will need throughout the paper, as well as the precise notion of finite energy weak solutions to system \eqref{eq:qmhd}. 
In Section \ref{sec:apr} we prove suitable dispersive estimates for weak solutions to the non-linear Maxwell-Schr\"odinger system \eqref{eq:MS}. 
These estimates play a crucial role in Section \ref{sec:der}, where we rigorously derive the continuity and momentum equations in \eqref{eq:qmhd}. As a byproduct the Lorentz force is well defined.
Section \ref{sec:ls} is devoted to prove non trivial local-smoothing estimates for the Maxwell-Schr\"odinger system \eqref{eq:MS}. Finally, in Section \ref{sec:fin} we prove our main results concerning the QMHD system \eqref{eq:qmhd}.

\section{Preliminaries}\label{sec:prel}
In this section we collect some preliminary results we are going to use throughout the paper and introduce the notation that will be used.
\newline
Given two positive quantities $A,B$, we write $A\lesssim B$ if there exists a constant $C>0$ such that $A\leq CB$. If the constant $C$ depends on a parameter $k$, we write $A\lesssim_k B$. For any $\lambda\in\R$, we set $\langle \lambda\rangle:=\sqrt{1+\lambda^2}$. 
Given a Lebesgue exponent $p\in[1, \infty]$, we denote by $p'$ its dual exponent. For a given vector field $A:\R^3\to\R^3$, we define the magnetic gradient 
\begin{equation*}
\nabla_A:=(-i\nabla-A). 
\end{equation*}
Given $s\in\R$, we write $\mathcal{D}^s:=(1-\Delta)^{s/2}$ for the Bessel operator of order $s$.  We set $\partial_j=\partial_{x_j}$, and we shall occasionally use the repeated indices notations. Given $\alpha\in\Z^3$, we denote by $Q_{\alpha}$ the unit cube centered at $\alpha$ with sides parallel to the axis. 
From now on, we fix a non-negative, smooth function $\chi$ such that $\chi\equiv 1$ on $Q_{0}$ and $\chi\equiv 0$ for $x\in 2Q_0^c$. For $\alpha\in\Z^3$, we set $\chi_{\alpha}(x):=\chi(x-\alpha)$. 
We often write $L^p$ (resp.~$W^{s,p}$) to denote the Lebesgue space $L^p(\R^3)$ (resp.~the Sobolev space $W^{s,p}(\R^3)$). As usual, $H^s$ denotes the space $W^{s,2}$. For any interval $I\subseteq\R$ and any Banach space $\mathcal{X}$, we denote by $L^p(I,\mathcal{X})$ (resp.~$W^{s,p}(I,\mathcal{X})$) the space of $\mathcal{X}$-valued Bochner measurable function on $I$, whose $\mathcal{X}$-norm belongs to $L^p(I)$ (resp.~$W^{s,p}(I)$). These spaces will be often abbreviated to $L_T^p\mathcal{X}$ and $W_T^{s,p}\mathcal{X}$ when $I=[0,T]$. Similarly, we write $\ell_{\alpha}^p\mathcal{X}$ to denote the space $\ell^p(\Z^3,\mathcal{X})$. Given two measurable functions $f,g:\R^3\to\C$, we write
$$(f,g):=\int_{\R^3}\overline{f}gdx,\quad \langle f,g\rangle:=\RE(f,g).$$
When not specified otherwise, $n$ denotes a positive integer constant, which may change at each occurrence.

We state an identity for vector fields in $\R^3$, which we will repeatedly use in the computations.
\begin{equation}\label{eq:tril}
	(V_1\wedge(\nabla\wedge V_2))\cdot V_3=V_1\cdot((V_3\cdot\nabla)V_2)-V_3\cdot((V_1\cdot\nabla)V_2).
\end{equation}

Let us recall the generalized fractional Leibniz rule \cite{Gulisashvili-Kon-1996}.
\begin{lemma}
Let  $s,\,\alpha,\,\beta\in[0,\infty)$, $p\in(1,\infty)$, and let $p_1,\,p_2,\,q_1,\,q_2\in(1,\infty]$ be such that $\frac{1}{p_i}+\frac{1}{q_i}=\frac1p$, $i=1,2$. Then
\begin{equation}\label{frac_leibniz}
\|\mathcal{D}^s(fg)\|_{L^p}\lesssim \|\mathcal{D}^{s+\alpha}f\|_{L^{p_1}}\|\mathcal{D}^{-\alpha}g\|_{L^{q_1}}+\|\mathcal{D}^{-\beta}f\|_{L^{p_2}} \|\mathcal{D}^{s+\beta}g\|_{L^{q_2}}.
\end{equation}
\end{lemma}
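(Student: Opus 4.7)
The plan is to establish \eqref{frac_leibniz} by a Littlewood--Paley paraproduct decomposition combined with the Coifman--Meyer multiplier theorem. Let $\{\Delta_j\}_{j\ge -1}$ be a standard inhomogeneous dyadic resolution, so that $\widehat{\Delta_j f}$ is supported in an annulus of scale $2^j$ for $j\ge 0$ and in a ball for $j=-1$. Split the product as
\begin{equation*}
fg \;=\; \pi_1(f,g)+\pi_2(f,g)+\pi_3(f,g),
\end{equation*}
where $\pi_1=\sum_{k\le j-N}(\Delta_j f)(\Delta_k g)$ (high--low), $\pi_2=\sum_{j\le k-N}(\Delta_j f)(\Delta_k g)$ (low--high), and $\pi_3=\sum_{|j-k|\le N}(\Delta_j f)(\Delta_k g)$ (diagonal), with $N$ a fixed large integer.

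First I would handle $\pi_1$, which is the piece that will produce the first term on the right of \eqref{frac_leibniz}. Each summand $(\Delta_j f)(\Delta_k g)$ with $k\le j-N$ has Fourier support in an annulus of size $2^j$, so $\mathcal{D}^s$ essentially acts on the $f$ factor. Inserting $\mathcal{D}^{s+\alpha}\mathcal{D}^{-(s+\alpha)}$ in front of $\Delta_j f$ and $\mathcal{D}^{-\alpha}\mathcal{D}^{\alpha}$ in front of $\Delta_k g$, I would rewrite $\mathcal{D}^s\pi_1(f,g)$ as a bilinear Fourier multiplier $T_\sigma(\mathcal{D}^{s+\alpha}f,\mathcal{D}^{-\alpha}g)$ whose symbol $\sigma(\xi,\eta)$ is supported in $\{|\eta|\lesssim |\xi|\}$ and satisfies the Coifman--Meyer estimates $|\partial_\xi^a\partial_\eta^b\sigma|\lesssim (1+|\xi|+|\eta|)^{-|a|-|b|}$; the use of Bessel (rather than Riesz) potentials makes these bounds uniform down to the origin. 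The Coifman--Meyer theorem together with the condition $\tfrac{1}{p_1}+\tfrac{1}{q_1}=\tfrac{1}{p}$ then yields
\begin{equation*}
\|\mathcal{D}^s\pi_1(f,g)\|_{L^p}\lesssim \|\mathcal{D}^{s+\alpha}f\|_{L^{p_1}}\|\mathcal{D}^{-\alpha}g\|_{L^{q_1}}.
\end{equation*}
The argument for $\pi_2$ is entirely symmetric, with $(\alpha,p_1,q_1)$ replaced by $(\beta,p_2,q_2)$, and produces the second term in \eqref{frac_leibniz}.

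The main obstacle will be the diagonal piece $\pi_3$, where both factors live at comparable frequency and $\mathcal{D}^s$ does not naturally fall on either one. Here the strategy is to exploit spectral localization: each $(\Delta_j f)(\Delta_k g)$ with $|j-k|\le N$ has spectrum in a ball of radius $\lesssim 2^j$, so I can write $\mathcal{D}^s\pi_3(f,g)=\sum_j \tilde\Delta_j[\mathcal{D}^s\pi_3(f,g)]$ for a slightly fattened projector $\tilde\Delta_j$. I would then freely redistribute the derivatives: pass $\mathcal{D}^{s+\alpha}$ onto the $f$-block and $\mathcal{D}^{-\alpha}$ onto the $g$-block (or the symmetric choice), bound each summand pointwise by a product of Hardy--Littlewood maximal functions, and close the estimate using the Littlewood--Paley square function characterization of $L^p$ together with the Fefferman--Stein vector-valued maximal inequality. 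This step is what forces $p\in(1,\infty)$ and genuinely uses both decompositions, producing a contribution dominated by either of the two terms in \eqref{frac_leibniz}.

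A final technical point is the treatment of endpoints $p_i=\infty$ or $q_i=\infty$: since $p<\infty$, the Coifman--Meyer boundedness still applies (with an $L^\infty$ factor replaced, when needed, by its John--Nirenberg surrogate, which here is harmless because $\mathcal{D}^{-\alpha}$ and $\mathcal{D}^{-\beta}$ already produce Bessel-smoothed factors). The low-frequency block $\Delta_{-1}$ in each decomposition is handled separately by Young's convolution inequality, since $\mathcal{D}^s\Delta_{-1}$ and $\mathcal{D}^{-\alpha}\Delta_{-1}$ are convolution operators with Schwartz kernels; this yields a controlled perturbative contribution that is absorbed into both terms on the right-hand side. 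Collecting the bounds for $\pi_1,\pi_2,\pi_3$ concludes the proof.
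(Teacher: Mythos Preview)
The paper does not prove this lemma at all: it is stated as a known fact with a citation to Gulisashvili--Kon, \emph{Exact smoothing properties of Schr\"odinger semigroups}, Amer.\ J.\ Math.\ 118 (1996). So there is no in-paper argument to compare against, and your paraproduct outline is already more than what the paper supplies.

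That said, there is a genuine gap in your treatment of $\pi_1$ (and symmetrically $\pi_2$). You claim that after inserting $\mathcal{D}^{s+\alpha}\mathcal{D}^{-(s+\alpha)}$ and $\mathcal{D}^{-\alpha}\mathcal{D}^{\alpha}$ the resulting bilinear symbol
\[
\sigma(\xi,\eta)=\langle\xi+\eta\rangle^{s}\,\langle\xi\rangle^{-(s+\alpha)}\,\langle\eta\rangle^{\alpha}\cdot m(\xi,\eta),
\]
with $m$ the high--low cutoff, satisfies the Coifman--Meyer bounds $|\partial_\xi^a\partial_\eta^b\sigma|\lesssim(1+|\xi|+|\eta|)^{-|a|-|b|}$. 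This fails precisely when $0<\alpha<1$. On the support one has $|\eta|\lesssim|\xi|$, so $1+|\xi|+|\eta|\sim\langle\xi\rangle$; but when $\partial_\eta$ lands on $\langle\eta\rangle^{\alpha}$ you only gain $\langle\eta\rangle^{-1}$, giving a term of size $\langle\xi\rangle^{-\alpha}\langle\eta\rangle^{\alpha-1}$. For $|\eta|\sim 1$ and $|\xi|\sim 2^j$ this is $2^{-j\alpha}$, which is much larger than the required $2^{-j}$ when $\alpha<1$. So the multiplier is \emph{not} of Coifman--Meyer type in this range, and a direct appeal to that theorem does not close the estimate; your remark about Bessel versus Riesz potentials addresses the low-frequency behavior but not this high--low mismatch.

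The paraproduct strategy is still the right one, but for $\pi_1,\pi_2$ you need to argue dyadically via square functions and the pointwise bound $|S_{j}\mathcal{D}^{\alpha}h|\lesssim 2^{j\alpha}Mh$ (whose kernel has $L^1$ norm $\lesssim 2^{j\alpha}$ because the symbol $\varphi(2^{-j}\cdot)\langle\cdot\rangle^{\alpha}$ is smooth with all derivatives controlled by $2^{j\alpha}$), together with Fefferman--Stein, rather than packaging everything as a single Coifman--Meyer operator. Your handling of $\pi_3$ is sketched along these lines and is closer to what is actually needed throughout.
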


In addition we have the following estimate, which can be deduced by the Kato-Ponce commutator estimates \cite{Kato-Ponce_CommEst-1988} and the observation that $\mathbb{P}\nabla=0$, see \cite[Lemma 2.6]{NW1} for details.

\begin{lemma}\label{le:tre}
	Let $s\geq 0$, and let $p,p_1,p_2\in(1,\infty)$, $q_1,q_2\in(1,\infty]$ be such that $\frac{1}{p_i}+\frac{1}{q_i}=\frac1p$, $i=1,2$. Then
	$$\|\mathbb{P}(\bar{f}\,\nabla g)\|_{W^{s,p}}\lesssim\|f\|_{W^{s,p_1}}\|\nabla g\|_{L^{q_1}}+\|\nabla f\|_{L^{q_2}}\|g\|_{W^{s,p_2}}.$$
\end{lemma}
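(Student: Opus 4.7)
The plan is to reduce the estimate to the Kato--Ponce commutator estimate of \cite{Kato-Ponce_CommEst-1988}, exploiting the two structural properties that $\mathcal{D}^s$ commutes with $\mathbb{P}$ (both are Fourier multipliers) and that $\mathbb{P}\nabla=0$. The latter yields the crucial algebraic identity
\begin{equation*}
\mathbb{P}(\bar f\,\nabla h)=-\mathbb{P}(h\,\nabla\bar f),
\end{equation*}
obtained by applying $\mathbb{P}$ to the Leibniz relation $\nabla(\bar f h)=\bar f\,\nabla h+h\,\nabla\bar f$. Recognizing that one must use this identity is, in my view, the main conceptual point: a direct application of the fractional Leibniz rule \eqref{frac_leibniz} to $\bar f\,\nabla g$ would force $\nabla g$ into a space of type $W^{s,\cdot}$, one derivative more than is available. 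The identity instead transfers that extra derivative onto $f$ inside the projection, which is exactly what matches the right-hand side of the claim.

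Using $\|u\|_{W^{s,p}}\sim\|\mathcal{D}^s u\|_{L^p}$, the first step will be to rewrite
\begin{equation*}
\mathcal{D}^s\mathbb{P}(\bar f\,\nabla g)=\mathbb{P}\mathcal{D}^s(\bar f\,\nabla g)=\mathbb{P}\bigl([\mathcal{D}^s,\bar f]\nabla g\bigr)+\mathbb{P}\bigl(\bar f\,\nabla(\mathcal{D}^s g)\bigr),
\end{equation*}
and then apply the algebraic identity to the second summand, producing
\begin{equation*}
\mathcal{D}^s\mathbb{P}(\bar f\,\nabla g)=\mathbb{P}\bigl([\mathcal{D}^s,\bar f]\nabla g\bigr)-\mathbb{P}\bigl((\mathcal{D}^s g)\,\nabla\bar f\bigr).
\end{equation*}
Both pieces are now amenable to standard tools.

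For the second piece, $L^p$-boundedness of $\mathbb{P}$ (which, since $\mathbb{P}=\mathbb{I}-\nabla\diver\Delta^{-1}$, is a matrix of Calder\'on--Zygmund operators) together with H\"older yields $\|\mathbb{P}((\mathcal{D}^s g)\,\nabla\bar f)\|_{L^p}\lesssim\|g\|_{W^{s,p_2}}\|\nabla f\|_{L^{q_2}}$, which is the second contribution in the claim. For the first piece I would invoke the Kato--Ponce commutator estimate
\begin{equation*}
\|[\mathcal{D}^s,\bar f]\nabla g\|_{L^p}\lesssim\|\nabla f\|_{L^{q_2}}\|\mathcal{D}^{s-1}\nabla g\|_{L^{p_2}}+\|\mathcal{D}^s f\|_{L^{p_1}}\|\nabla g\|_{L^{q_1}},
\end{equation*}
and then use $L^{p_2}$-boundedness of the Mikhlin multiplier with symbol $i\xi/\langle\xi\rangle$ to absorb $\|\mathcal{D}^{s-1}\nabla g\|_{L^{p_2}}\lesssim\|\mathcal{D}^s g\|_{L^{p_2}}=\|g\|_{W^{s,p_2}}$. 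Adding the bounds produces the inequality. The only delicate technical point I anticipate is the validity of the commutator estimate at the endpoint $q_i=\infty$ permitted by the statement; this is by now well understood through refinements of the Kato--Ponce inequality (e.g.\ Grafakos--Oh, Li) and does not introduce any genuinely new obstruction.
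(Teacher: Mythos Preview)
Your argument is correct and follows exactly the approach indicated in the paper: the combination of the Kato--Ponce commutator estimate with the key observation $\mathbb{P}\nabla=0$, which is precisely what \cite[Lemma 2.6]{NW1} does. The decomposition via $[\mathcal{D}^s,\bar f]$ and the use of $\mathbb{P}(\bar f\,\nabla h)=-\mathbb{P}(h\,\nabla\bar f)$ to avoid placing an extra derivative on $g$ is the heart of the matter, and you have identified it correctly.
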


The following lemma (see e.g.~\cite[Proposition 4.9.4]{cazenave}) will be useful when we estimate the pure-power term $|\psi|^{2(\gamma-1)}u$ in fractional Sobolev spaces.

\begin{lemma}\label{esti_pure}
	Let $\gamma>1$, $s\in(0,1)$, $\eps>0$, $1\leq q<p\leq \infty$, and set $\widetilde{p}:=\frac{(2\gamma-1)pq}{p-q}$. Then we have the estimate  
	$$\||\psi|^{2(\gamma-1)}\psi\|_{W^{s,q}}\lesssim\|\psi\|_{L^{\widetilde{p}}}^{2(\gamma-1)}\|\psi\|_{W^{s+\eps,p}}.$$
\end{lemma}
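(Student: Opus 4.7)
The bound is a Moser/fractional-chain-rule estimate for the pure-power nonlinearity $F(z):=|z|^{2(\gamma-1)}z$. My plan follows the scheme behind \cite[Prop.~4.9.4]{cazenave}, whose essential inputs are the pointwise bounds
$$|F(z)-F(w)|\lesssim \bigl(|z|+|w|\bigr)^{2(\gamma-1)}|z-w|,\qquad |F'(z)|\lesssim |z|^{2(\gamma-1)},$$
valid for every $\gamma>1$ (they follow from differentiating away from $0$ and continuity), together with the flexibility afforded by Sobolev embedding once $\eps>0$.

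Concretely I would split $\|F(\psi)\|_{W^{s,q}}\lesssim\|F(\psi)\|_{L^q}+[F(\psi)]_{\dot W^{s,q}}$ and treat the two summands separately. The Lebesgue piece is handled by the identity $\|F(\psi)\|_{L^q}=\|\psi\|^{2\gamma-1}_{L^{(2\gamma-1)q}}$ together with log-convex interpolation of the Lebesgue exponent $(2\gamma-1)q$ between $L^{\widetilde p}$ (present on the right-hand side) and a higher Lebesgue exponent $L^m$ supplied by the Sobolev embedding $W^{s+\eps,p}\hookrightarrow L^m$, which holds for some $m>p$ precisely because $s+\eps>0$. For the Gagliardo seminorm I would invoke the fractional chain rule
$$[F(\psi)]_{\dot W^{s,q}}\lesssim \|F'(\psi)\|_{L^a}\|\psi\|_{\dot W^{s,r}},\qquad \tfrac1q=\tfrac1a+\tfrac1r,$$
valid for $s\in(0,1)$ whenever $F$ is weighted-Lipschitz in the above sense; then substitute $\|F'(\psi)\|_{L^a}=\|\psi\|^{2(\gamma-1)}_{L^{2(\gamma-1)a}}$, interpolate this Lebesgue norm again between $L^{\widetilde p}$ and an $L^{m'}$ embedded into by $W^{s+\eps,p}$, and either use $\|\psi\|_{\dot W^{s,r}}\le\|\psi\|_{W^{s+\eps,p}}$ trivially (when $r\le p$) or Sobolev inclusion $W^{s+\eps,p}\hookrightarrow W^{s,r}$ (when $r>p$, which costs at most $\eps/3$ of Lebesgue exponent). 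The specific value $\widetilde p=(2\gamma-1)pq/(p-q)$ and the freedom in the auxiliary exponents $m,m',r$ are exactly what make the bookkeeping close.

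The main obstacle is the low-regularity regime $1<\gamma<3/2$, where $F\in C^{1,2(\gamma-1)}$ but $F'$ fails to be Lipschitz, so the differentiable fractional chain rule of Christ--Weinstein does not apply in its cleanest form. In that range I would replace the chain rule by the Slobodeckij representation
$$[F(\psi)]_{\dot W^{s,q}}^q\sim\iint_{\R^3\times\R^3}\frac{|F(\psi(x))-F(\psi(y))|^q}{|x-y|^{3+sq}}\,dxdy,$$
insert the pointwise weighted-Lipschitz bound, symmetrize in $(x,y)$, and apply Hölder in $x$ with exponent tuned so that the $|\psi|^{2(\gamma-1)}$-factor lands in $L^{\widetilde p}$; the inner integral in $h=y-x$ is a Triebel--Lizorkin $F^{s}_{q,q}$-density which, after the outer Hölder and a standard embedding, is controlled by $\|\psi\|^q_{W^{s+\eps,p}}$. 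A secondary technical point is the endpoint $p=\infty$, where Sobolev embedding degenerates and $\widetilde p=(2\gamma-1)q$; this case must be handled by hand via the elementary inequality $\|F(\psi)\|_{W^{s,q}}\lesssim \|\psi\|^{2(\gamma-1)}_{L^\infty}\|\psi\|_{W^{s,q}}$, combined with the trivial bound $\|\psi\|_{W^{s,q}}\le\|\psi\|_{L^{\widetilde p}}^{2(\gamma-1)/(2\gamma-1)}\|\psi\|_{W^{s+\eps,\infty}}^{1/(2\gamma-1)}$ obtained by Hölder applied pointwise to $F$.
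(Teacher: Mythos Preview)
The paper does not prove this lemma; it simply records the statement and refers to \cite[Proposition~4.9.4]{cazenave}. Your outline follows precisely that reference and reproduces the standard Moser/fractional-chain-rule scheme behind it (weighted-Lipschitz bound on $F$, H\"older splitting, Sobolev embedding to absorb the $\eps$-loss), so there is nothing to compare---your approach \emph{is} the approach the paper defers to.

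One caveat on the execution: your treatment of the low-order Lebesgue piece $\|F(\psi)\|_{L^q}$ is looser than the seminorm part. You write that you will interpolate $L^{(2\gamma-1)q}$ between $L^{\widetilde p}$ and some $L^m$ with $m>p$ supplied by Sobolev embedding of $W^{s+\eps,p}$; but the H\"older balance forces $\tfrac1m=\tfrac1q-\tfrac{2(\gamma-1)}{\widetilde p}$, which gives $m<p$, not $m>p$. In Cazenave's formulation the result is stated in homogeneous Besov seminorms, where this issue does not arise, and in the paper's applications the zero-order part is always controlled separately (or absorbed by the same $H^1\cap W^{1/2,6}$ bounds). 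So the core of your plan---the Gagliardo/Slobodeckij argument for the seminorm---is exactly right, but be aware that the inhomogeneous $L^q$-contribution needs either the homogeneous reading of the estimate or an independent bound. Your handling of the $p=\infty$ endpoint is similarly heuristic; the cleanest route there is direct: $\|F(\psi)\|_{W^{s,q}}\lesssim\|\psi\|_{L^\infty}^{2(\gamma-1)}\|\psi\|_{W^{s,q}}$ together with $\widetilde p=(2\gamma-1)q$ and a Gagliardo--Nirenberg interpolation of $W^{s,q}$ between $L^{\widetilde p}$ and $W^{s+\eps,\infty}$.
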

For the Hartree term $\phi\psi$ we will use the bound
\begin{equation}\label{esti_conv}
\|\big((-\Delta)^{-1}|\psi|^2\big)\psi\|_{H^{1/2}}\lesssim\|u\|_{H^1}^3,Ponce commutator estimates
\end{equation}
which follows by combining H\"older and Hardy-Littlewood-Sobolev inequalities -- see e.g.~\cite[Lemma 2.1]{NW2}.

We will also use the inequality
\begin{equation}\label{eq:disin}
	\|\chi_{\alpha} f\|_{\ell_{\alpha}^rW^{s,r}}\lesssim \|f\|_{W^{s,r}},\qquad s\in\R,\,r\in(1,\infty),
\end{equation}
which can be proved analogously as in \cite[Lemma 2.1]{Wada2012}.

Next, we state a version of the Strichartz estimates for the wave equation. We say that a pair $(q,r)$ is wave-admissible if $\frac{1}{q}+\frac{1}{r}=\frac12$, $q\in(2,+\infty]$. We have the following result, see e.g.~\cite{Brenner,Ginibre-Velo_KG,Ginibre-Velo_Strichartz}.
\begin{lemma}\label{le:uno}
Let $T>0$, $\sigma\geq 1$, and let $(q_0,r_0)$ be a wave-admissible pair. For any given $(A_0,A_1)\in\Sigma^{\sigma}$ and $F\in L_{T}^{q'_0}W^{\sigma-1+2/q_0,r'_0}$, there exists a unique solution  $A\in C([0,T],H^{\sigma})\cap C^1([0,T],H^{\sigma-1})$ to the equation $\square A=F$, with initial data $A(0)=A_0$, $\partial_tA(0)=A_1$, which satisfies the estimate
\begin{equation}\label{eq:kg_stri}
\max_{k=0,1}\|\partial_t^k A\|_{L_T^{q}W^{\sigma-k-2/q,r}}\lesssim_T\|(A_0,A_1)\|_{\Sigma^{\sigma}}+\|F\|_{L_T^{q'_0}W^{\sigma+2/q_0-1,r'_0}}
\end{equation}
for every wave-admissible pair $(q,r)$.
\end{lemma}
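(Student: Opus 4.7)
Plan: Since the lemma is linear, I would write $A$ via Duhamel's formula
$$A(t) = \cos(t|\nabla|)\,A_0 + |\nabla|^{-1}\sin(t|\nabla|)\,A_1 + \int_0^t |\nabla|^{-1}\sin((t-\tau)|\nabla|)\,F(\tau)\,d\tau,$$
and derive \eqref{eq:kg_stri} by invoking the classical Strichartz inequalities for the three-dimensional free wave equation (as in Brenner and Ginibre--Velo). Existence and uniqueness in $C([0,T];H^\sigma)\cap C^1([0,T];H^{\sigma-1})$ then follow from the standard energy estimate for $\square$ applied to the difference of two candidate solutions, together with a density argument for smooth data.

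For the homogeneous contributions, the sharp Strichartz bound for any wave-admissible pair $(q,r)$ with $q>2$ reads $\|e^{\pm it|\nabla|}g\|_{L^q_t L^r_x}\lesssim\|g\|_{\dot H^{1-2/r}}$. Replacing $g$ by $|\nabla|^{\sigma-2/q}g$ and using the sharp-admissibility identity $\frac{2}{q}+\frac{2}{r}=1$ (so that $(\sigma-2/q)+(1-2/r)=\sigma$), one obtains
$$\bigl\|\cos(t|\nabla|)A_0\bigr\|_{L^q_T \dot W^{\sigma-2/q,r}} + \bigl\||\nabla|^{-1}\sin(t|\nabla|)A_1\bigr\|_{L^q_T \dot W^{\sigma-2/q,r}}\lesssim\|(A_0,A_1)\|_{\Sigma^\sigma}.$$
The passage from these homogeneous norms to the inhomogeneous $W^{\sigma-2/q,r}$ norms of \eqref{eq:kg_stri} is handled by dominating the low-frequency part of $A$ by $T^{1/q}\|A\|_{L^\infty_T H^\sigma}$, which is in turn controlled by $\|(A_0,A_1)\|_{\Sigma^\sigma}$ through the standard energy identity. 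This is what produces the $T$-dependence of the constant.

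For the Duhamel integral I would pair the dual (non-retarded) Strichartz inequality
$$\Bigl\|\int_0^T |\nabla|^{-1}\sin(-\tau|\nabla|)\,F(\tau)\,d\tau\Bigr\|_{\dot H^\sigma}\lesssim \|F\|_{L^{q_0'}_T \dot W^{\sigma+2/q_0-1,r_0'}}$$
with the homogeneous Strichartz bound just established, and use the Christ--Kiselev lemma (applicable since $q_0>q_0'$, i.e.~$q_0>2$) to restore the retarded time cutoff. The bounds for $\partial_t A$ are proved identically after noting $\partial_t\cos(t|\nabla|)=-|\nabla|\sin(t|\nabla|)$ and $\partial_t(|\nabla|^{-1}\sin(t|\nabla|))=\cos(t|\nabla|)$, which shift the derivative count by one and thereby account for the $-k$ in the exponent $\sigma-k-2/q$.

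There is no genuinely hard step here: the lemma is classical and is invoked by the authors essentially as a black box. The only delicate points are (i) the scaling compatibility between the sharp wave-admissibility condition $\frac1q+\frac1r=\frac12$ and the derivative exponent $\sigma-k-2/q$ on the left of \eqref{eq:kg_stri}, and (ii) the absorption of the mismatch between homogeneous and inhomogeneous Sobolev norms at low frequency into a constant depending on $T$.
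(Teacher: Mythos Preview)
Your sketch is correct and follows the standard route (Duhamel representation, homogeneous Strichartz for the half-wave propagators, duality plus Christ--Kiselev for the inhomogeneous term, and a low-frequency correction to pass to inhomogeneous Sobolev norms). The paper itself does not prove this lemma at all: it is stated as a known result with references to Brenner and Ginibre--Velo, so there is nothing to compare beyond noting that your outline is precisely the argument one would extract from those references.
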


We collect now a series of dispersive-type estimates for the Schr\"odinger equation. 
First of all, we are going to extensively use the following endpoint Strichartz estimates

\begin{gather}
\label{clasti}	\|e^{it\Delta}f\|_{L_T^2L^6}\lesssim \|f\|_{L_T^2},\\
\label{class_stri}
\big\|\int_0^te^{i(t-s)\Delta}F(s)ds\big\|_{L_T^2L^6}\lesssim \|F\|_{L_T^1L^2+L_T^2L^{6/5}}.
\end{gather}
For more details we refer to \cite{Keel-Tao} and the references therein. Let us further state the following smoothing-Strichartz estimate, which can be deduced by the results in \cite[Lemma 3]{Ionescu-Kenig}.
\begin{equation}\label{sm-du-st}
		\big\|\chi_{\alpha}\,\mathcal{D}^{1/2}\!\int_0^t e^{i(t-s)\Delta}F(s)ds\big\|_{\ell_{\alpha}^{\infty}L_T^2L^2}\lesssim \|F\|_{L_T^1L^2+L_T^2L^{6/5}}.
\end{equation}
We also recall the following Koch-Tzvetkov type-estimate, see e.g.~\cite[Lemma 2.4]{NW1}.
\begin{lemma}\label{le:due}
Let $T>0$ and $s,\alpha\in\R$. Fix moreover $F\in L_T^{2}H^{s-2\alpha}$, and let $\psi\in L^{\infty}_TH^s$ be a weak solution to $i\partial_t \psi=-\Delta \psi + F$. Then $\psi$ satisfies 
\begin{equation}\label{eq:kt}
\|\psi\|_{L_T^2W^{s-\alpha,6}}^2\lesssim T^{-1}\|\psi\|_{L_T^2H^s}^2+T\|F\|_{L_T^2H^{s-2\alpha}}^2.
\end{equation}
\end{lemma}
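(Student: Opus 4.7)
The plan is to combine a Duhamel representation from a \emph{variable} reference time $t_0\in[0,T]$ with an averaging argument over $t_0$. The averaging will produce the characteristic $T^{-1}$ factor on the right-hand side, while endpoint Strichartz estimates handle both the linear and source pieces.

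Concretely, for each $t_0\in[0,T]$, Duhamel's principle gives
\[
\psi(t)=e^{i(t-t_0)\Delta}\psi(t_0)-i\int_{t_0}^{t}e^{i(t-s)\Delta}F(s)\,ds.
\]
Applying $\mathcal D^{s-\alpha}$ (which commutes with $e^{it\Delta}$) and using the homogeneous endpoint Strichartz bound \eqref{clasti}, the linear piece satisfies $\|e^{i(\cdot-t_0)\Delta}\psi(t_0)\|_{L^2_T W^{s-\alpha,6}}\lesssim \|\psi(t_0)\|_{H^{s-\alpha}}\leq\|\psi(t_0)\|_{H^{s}}$. Since $\|\psi\|_{L^2_T W^{s-\alpha,6}}$ does not depend on $t_0$, squaring the triangle inequality and integrating in $t_0\in[0,T]$ against $dt_0/T$ converts $\|\psi(t_0)\|_{H^s}^2$ into precisely $T^{-1}\|\psi\|_{L^2_T H^s}^2$, yielding the first term on the RHS.

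For the inhomogeneous Duhamel piece $\int_{t_0}^{t}e^{i(t-s)\Delta}F(s)\,ds$, I would invoke the dual endpoint inhomogeneous Strichartz \eqref{class_stri}, in combination with Bessel-potential / Hardy--Littlewood--Sobolev type embeddings in $\R^{3}$, to obtain a bound of the form
\[
\Big\|\int_{t_0}^{t}e^{i(t-s)\Delta}F(s)\,ds\Big\|_{L^2_T W^{s-\alpha,6}}\lesssim T^{1/2}\|F\|_{L^2_T H^{s-2\alpha}}.
\]
Since this bound is essentially uniform in $t_0$, its contribution after squaring and averaging is precisely $T\|F\|_{L^2_T H^{s-2\alpha}}^2$, matching the second term on the RHS.

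The main obstacle I expect is the derivative budget on the inhomogeneous Duhamel term. The naive $L^1_t L^2_x$ inhomogeneous Strichartz estimate only delivers the weaker bound $T\|F\|_{L^2_T H^{s-\alpha}}^2$, off by $\alpha$ Sobolev derivatives from the claim. Achieving the sharper $H^{s-2\alpha}$ regularity requires carefully combining the endpoint pair $(2,6/5)$ for the inhomogeneous Strichartz with Hardy--Littlewood--Sobolev / Bessel embeddings in three dimensions, so as to transfer $\alpha$ units of spatial integrability into $\alpha$ additional Sobolev derivatives on $F$. This redistribution of indices is the technical heart of the Koch--Tzvetkov type smoothing estimate and the step that distinguishes \eqref{eq:kt} from a plain Strichartz bound.
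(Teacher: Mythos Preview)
Your overall strategy is the standard Koch--Tzvetkov argument and matches the approach of the cited reference \cite[Lemma 2.4]{NW1} (the present paper does not give its own proof). The Duhamel representation from a variable base time $t_0$, together with the averaging over $t_0\in[0,T]$, is exactly how the $T^{-1}\|\psi\|_{L^2_TH^s}^2$ term arises, and your treatment of the homogeneous piece is correct (note that the step $\|\psi(t_0)\|_{H^{s-\alpha}}\le\|\psi(t_0)\|_{H^s}$ requires $\alpha\ge0$; this is harmless for the paper's applications, where $\alpha\in[\tfrac12,1)$).

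There is, however, a small imprecision in your handling of the inhomogeneous piece. The endpoint inhomogeneous Strichartz estimate with dual pair $(2,6/5)$, combined with the Sobolev embedding $H^{-1}(\R^3)\hookrightarrow L^{6/5}(\R^3)$, gives
\[
\Big\|\int_{t_0}^{t}e^{i(t-s)\Delta}F(s)\,ds\Big\|_{L^2_T W^{s-\alpha,6}}\lesssim \|\mathcal D^{s-\alpha}F\|_{L^2_T L^{6/5}}\lesssim \|F\|_{L^2_T H^{s-\alpha-1}},
\]
with \emph{no} factor of $T^{1/2}$. For $\alpha\le1$ this is bounded by $\|F\|_{L^2_T H^{s-2\alpha}}$, so after squaring and averaging you obtain $T^{-1}\|\psi\|_{L^2_TH^s}^2+\|F\|_{L^2_TH^{s-2\alpha}}^2$ rather than the stated $T\|F\|_{L^2_TH^{s-2\alpha}}^2$. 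In other words, the endpoint route you propose does produce the correct derivative count $s-2\alpha$ on $F$, but not via an additional $T^{1/2}$ factor; your sentence ``Since this bound is essentially uniform in $t_0$, its contribution after squaring and averaging is precisely $T\|F\|^2$'' is therefore not accurate as written. This discrepancy is immaterial for the paper's purposes (every application absorbs the $T$-dependence into $\lesssim_T$), but you should be aware that the precise power of $T$ you claim is not what your argument actually yields.
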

The Koch-Tzvetkov estimate \eqref{eq:kt} allows in particular to prove a dispersive estimate with $1/2$-loss of derivatives for the magnetic Schr\"odinger flow. Indeed we have the following result \cite[Lemma 2.8]{AMS}.

\begin{lemma}\label{le:quattordici}
	Let $T>0$, $s\in [1,2]$, $\alpha\in[\frac12,1)$, $\sigma\geq 1$, with $(\alpha,\sigma)\neq(\frac12,1)$. Let $A\in L_T^{\infty}H^{\sigma}\cap L_T^2L^{3/(2\alpha-1)}$, with $\diver A=0$, and $F\in L_T^{2}H^{s-2\alpha}$. Then a weak solution $\psi$ to $i\partial_t \psi=-\Delta_A \psi + F$ satisfies
	\begin{equation}\label{ext:nw}
		\|\psi\|_{L_T^2W^{s-\alpha,6}} \lesssim_T \langle\|A\|_{ L_T^{\infty}H^{\sigma}\cap L_T^2L^{3/(2\alpha-1)}}\rangle^n\|\psi\|_{L_T^{\infty}H^s}+\|F\|_{L_T^2H^{s-2\alpha}}.
	\end{equation}
\end{lemma}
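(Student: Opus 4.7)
The plan is to absorb the magnetic part into the source term and then invoke the Koch--Tzvetkov bound of Lemma~\ref{le:due} for the free Schr\"odinger flow. Using the Coulomb gauge $\diver A = 0$, the identity $(-i\nabla - A)^2 = -\Delta + 2iA\cdot\nabla + |A|^2$ lets me rewrite the equation as $i\partial_t\psi = -\Delta\psi + \tilde F$ with
\begin{equation*}
\tilde F := F + 2iA\cdot\nabla\psi + |A|^2\psi.
\end{equation*}
Plugging this into \eqref{eq:kt} yields
\begin{equation*}
\|\psi\|_{L_T^2 W^{s-\alpha,6}}^2 \lesssim T^{-1}\|\psi\|_{L_T^\infty H^s}^2 + T\|F\|_{L_T^2 H^{s-2\alpha}}^2 + T\|A\cdot\nabla\psi\|_{L_T^2 H^{s-2\alpha}}^2 + T\||A|^2\psi\|_{L_T^2 H^{s-2\alpha}}^2,
\end{equation*}
so the remaining task is to control the two magnetic correction terms by $\langle\|A\|_{L_T^\infty H^\sigma \cap L_T^2 L^{3/(2\alpha-1)}}\rangle^n\|\psi\|_{L_T^\infty H^s}$.

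For the first-order term $A\cdot\nabla\psi$, I would take $L^2$ in time by pairing $A \in L_T^2 L^{3/(2\alpha-1)}$ with $\nabla\psi \in L_T^\infty L^{6/(5-4\alpha)}$; the exponent $3/(2\alpha-1)$ is tuned precisely so that the H\"older product lands in $L^2_x$, and the target integrability of $\nabla\psi$ is available via the Sobolev embedding $H^{s-1}\hookrightarrow L^{6/(5-4\alpha)}$. For the regime $s - 2\alpha\le 0$ the product is then automatically in $L_T^2 H^{s-2\alpha}$, whereas for $s > 2\alpha$ the fractional Leibniz rule \eqref{frac_leibniz} transfers the extra derivative either onto $A$ (using $\|A\|_{L_T^\infty H^\sigma}$) or onto $\nabla\psi$ (using $\|\psi\|_{L_T^\infty H^s}$), with Sobolev embeddings picking up the compatible Lebesgue norm of the other factor. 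The zeroth-order term $|A|^2\psi = A\cdot(A\psi)$ is treated analogously, with one copy of $A$ controlled through $L_T^\infty H^\sigma$ and the other through $L_T^2 L^{3/(2\alpha-1)}$.

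The hard part is the careful bookkeeping of indices in the Leibniz step, particularly near the singular configuration $(\alpha,\sigma) = (1/2,1)$ which is excluded: at $\alpha = 1/2$ the exponent $3/(2\alpha-1)$ degenerates to $\infty$, forcing $A \in L_T^2 L^\infty$, and simultaneously at $\sigma = 1$ the embedding $H^\sigma \hookrightarrow L^\infty$ fails in three dimensions, so there is no cushion to close the $L_T^2 L^2$-estimate of the correction terms. In every other configuration, either $\alpha > 1/2$ (finite spatial integrability exponent for $A$) or $\sigma > 1$ (Sobolev slack in the regularity of $A$) provides the margin needed to close the estimate with an at-worst polynomial dependence on the mixed norm of $A$, as claimed.
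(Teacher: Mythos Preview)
Your strategy---rewrite $-\Delta_A\psi=-\Delta\psi+2iA\cdot\nabla\psi+|A|^2\psi$ via the Coulomb gauge and feed the magnetic terms into the source of the Koch--Tzvetkov estimate \eqref{eq:kt}---is exactly the approach of the cited reference \cite[Lemma~2.8]{AMS}; the paper itself does not reprove the lemma but defers to that source. So the plan is correct.

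There is, however, a concrete error in your exponent bookkeeping for the term $A\cdot\nabla\psi$ in the regime $s<2\alpha$. You claim the embedding $H^{s-1}\hookrightarrow L^{6/(5-4\alpha)}$, but this requires
\[
s-1\;\ge\;3\Bigl(\tfrac12-\tfrac{5-4\alpha}{6}\Bigr)=2\alpha-1,
\]
i.e.\ $s\ge 2\alpha$, which is precisely the \emph{opposite} of the case you are treating. For a concrete failure: at $s=1$, $\alpha=3/4$ your argument asks for $\nabla\psi\in L^3$ from $\nabla\psi\in L^2$, which is false. The repair is not to aim for $L^2_x$ but for the Lebesgue space that embeds into the \emph{negative} Sobolev space $H^{s-2\alpha}$: by duality $L^{p}\hookrightarrow H^{s-2\alpha}$ with $\tfrac1p=\tfrac12+\tfrac{2\alpha-s}{3}$, and then H\"older against $A\in L^{3/(2\alpha-1)}$ leaves exactly $\nabla\psi\in L^{r}$ with $\tfrac1r=\tfrac12-\tfrac{s-1}{3}$, the critical Sobolev exponent for $H^{s-1}$. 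With this adjustment the estimate closes and your sketch goes through.
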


Next we recall some useful results for time-independent magnetic potentials. For any given $A\in L^2_{\mathrm{loc}}(\R^3)$, the magnetic Laplacian $-\Delta_A$ can be defined as a non-negative self-adjoint operator on $L^2(\R^3)$, by means of a quadratic form argument \cite{Simon-79_maximal-minimal_Schr_forms}. For a given $\lambda\in\C\setminus(-\infty,0]$, we denote by $R_A(\lambda):=(-\Delta_A+\lambda)^{-1}$ the resolvent of the magnetic Laplacian. Given $s\in\R$, we can define the covariant fractional derivate $\mathcal{D}_A^s:=(1-\Delta_A)^{s/2}$ and the magnetic Sobolev space 
$$H_A^s(\R^3):=\operatorname{Dom}\,\mathcal{D}_A^s,\quad\|f\|_{H_A^s(\R^3)}:=\|\mathcal{D}_A^sf\|_{L^2(\R^3)}.$$
Observe that, as a consequence of the spectral representation of $-\Delta_A$, we have the following result.
\begin{lemma}\label{lessi}
Let $\sigma\in[-2,2]$, $s\in[\sigma,\sigma+2]$ and $\delta>0$. Then we have the estimate
\begin{equation}\label{eq:s-sigma}
\|R_A(\lambda)f\|_{H_A^s}\lesssim \langle\lambda\rangle^{-1+(s-\sigma)/2}\|f\|_{H_A^{\sigma}},\quad |\!\operatorname{arg}\lambda|\leq\pi-\delta.
\end{equation}
\end{lemma}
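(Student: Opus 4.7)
My plan is to reduce the statement to a pointwise bound on a scalar multiplier via the Borel functional calculus for the self-adjoint operator $-\Delta_A$.

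Since $-\Delta_A$ is non-negative self-adjoint, the operators $\mathcal{D}_A^s = (1-\Delta_A)^{s/2}$, $R_A(\lambda) = (-\Delta_A + \lambda)^{-1}$ and $\mathcal{D}_A^{-\sigma}$ are all functions of $-\Delta_A$ and therefore mutually commute. Setting $g := \mathcal{D}_A^{\sigma}f$, so that $\|g\|_{L^2} = \|f\|_{H_A^\sigma}$, I would first write
$$\|R_A(\lambda)f\|_{H_A^s} = \|\mathcal{D}_A^s R_A(\lambda)\mathcal{D}_A^{-\sigma} g\|_{L^2} = \|h_\lambda(-\Delta_A)g\|_{L^2},$$
where $h_\lambda(\mu) := (1+\mu)^{(s-\sigma)/2}(\mu+\lambda)^{-1}$. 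By the spectral theorem, the $L^2\to L^2$ operator norm of $h_\lambda(-\Delta_A)$ equals $\sup_{\mu\in\sigma(-\Delta_A)}|h_\lambda(\mu)|$ and is in particular bounded by $\sup_{\mu\geq 0}|h_\lambda(\mu)|$, so the lemma reduces to the scalar estimate
$$\sup_{\mu \geq 0}\frac{(1+\mu)^{(s-\sigma)/2}}{|\mu+\lambda|}\lesssim_\delta \langle\lambda\rangle^{-1+(s-\sigma)/2}.$$

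The core of the proof is then the sectorial bound
$$|\mu+\lambda|\geq c_\delta(\mu+|\lambda|),\qquad \mu\geq 0,\quad |\!\arg\lambda|\leq \pi-\delta,$$
with $c_\delta=\sin(\delta/2)$. Writing $\lambda = re^{i\theta}$, one minimises $\mu\mapsto|\mu+\lambda|^2/(\mu+r)^2$ on $[0,\infty)$; elementary calculus shows that in the extremal case $|\theta|=\pi-\delta$ the minimum $\sin^2(\delta/2)$ is attained at $\mu=r$, and the general case follows by monotonicity of $\cos\theta$. Granting this, I would set $\alpha:=(s-\sigma)/2\in[0,1]$ and split $[0,\infty)$ at $\mu=|\lambda|$: on the region $\mu\leq|\lambda|$ one has $(1+\mu)^\alpha\lesssim\langle\lambda\rangle^\alpha$ and $\mu+|\lambda|\geq |\lambda|$, while on $\mu\geq|\lambda|$ one uses $(1+\mu)^\alpha/(\mu+|\lambda|)\lesssim \mu^{\alpha-1}\leq|\lambda|^{\alpha-1}$, exploiting $\alpha\leq 1$. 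Combining the two cases and using that $|\lambda|^{\alpha-1}$ is comparable to $\langle\lambda\rangle^{\alpha-1}$ in the relevant regime yields the claimed inequality.

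The main obstacle I anticipate is the sectorial bound on $|\mu+\lambda|$: once it is established, the rest is routine multiplier bookkeeping. A secondary subtlety is that the case split must be performed at $\mu=|\lambda|$ (rather than $\mu=1$) in order to exploit the condition $s\leq\sigma+2$ in the high-frequency regime, and one must check that the two elementary bounds glue without loss to reproduce exactly the $\langle\lambda\rangle^{-1+(s-\sigma)/2}$ decay. The hypothesis $\sigma\in[-2,2]$ does not enter the spectral estimate itself but is needed so that the symbols $(1+\mu)^{\pm\sigma/2}$ define bounded operators between the appropriate magnetic Sobolev spaces used in the reduction.
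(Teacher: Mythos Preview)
Your proposal is correct and follows exactly the route the paper indicates: the paper merely asserts that the lemma is ``a consequence of the spectral representation of $-\Delta_A$'' and gives no further detail, and your reduction via Borel functional calculus to the scalar multiplier bound $\sup_{\mu\geq 0}(1+\mu)^{(s-\sigma)/2}|\mu+\lambda|^{-1}\lesssim_\delta\langle\lambda\rangle^{-1+(s-\sigma)/2}$, together with the sectorial lower bound $|\mu+\lambda|\geq\sin(\delta/2)(\mu+|\lambda|)$, is precisely what underlies that one-line remark. One small correction: the hypothesis $\sigma\in[-2,2]$ plays no role whatsoever in the spectral argument (the magnetic Sobolev norms $H_A^s$ are defined for all real $s$ by functional calculus, so your reduction goes through unchanged); that restriction is present only because it is the range in which $H_A^s\cong H^s$ via Lemma~\ref{le:quattro}, which is what the applications of Lemma~\ref{lessi} in the paper actually need.
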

When the magnetic potential is regular enough, the classical and the magnetic Sobolev norms, for a suitable regime of regularity, are equivalent. In particular, we shall use the following result, see e.g.~\cite[Lemma 2.2]{NW1}, \cite[Lemma 2.5]{Wada2012}.
\begin{lemma}\label{le:quattro}
Assume that $A\in \dot{H}^1(\R^3)$. Then for every $s\in[-2,2]$, we have $H_A^s(\R^3)\cong H^s(\R^3)$. More precisely, the following estimate holds true:
\begin{equation}\label{equicono}
\langle\|A\|_{\dot{H}^1}\rangle^{-n}\|f\|_{H_A^s}\lesssim\|f\|_{H^s}\lesssim \langle\|A\|_{\dot{H}^1}\rangle^{n}\|f\|_{H_A^s}.
\end{equation}
\end{lemma}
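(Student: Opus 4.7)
The plan is to prove the equivalence first at the endpoint integer cases $s\in\{0,1,2\}$ by elementary expansions combined with Sobolev embeddings, extend to $s\in\{-1,-2\}$ by duality, and then interpolate to cover all intermediate values of $s\in[-2,2]$. The case $s=0$ is trivial since $H_A^0=L^2=H^0$ isometrically.

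For $s=1$ I would expand
\begin{equation*}
\|\nabla_A f\|_{L^2}^2 = \|\nabla f\|_{L^2}^2 + 2\RE(i\nabla f, Af)_{L^2} + \|Af\|_{L^2}^2
\end{equation*}
and control the cross and quadratic terms through the Sobolev embedding $\dot H^1(\R^3)\hookrightarrow L^6$ together with the interpolation bound $\|f\|_{L^3}\lesssim\|f\|_{L^2}^{1/2}\|\nabla f\|_{L^2}^{1/2}$, giving
\begin{equation*}
\|Af\|_{L^2}\le \|A\|_{L^6}\|f\|_{L^3}\lesssim \|A\|_{\dot H^1}\|f\|_{L^2}^{1/2}\|f\|_{H^1}^{1/2}.
\end{equation*}
A standard Young absorption then yields $\|f\|_{H_A^1}\lesssim \langle\|A\|_{\dot H^1}\rangle^2\|f\|_{H^1}$, and combining with the pointwise bound $\|\nabla f\|_{L^2}\le \|\nabla_A f\|_{L^2}+\|Af\|_{L^2}$ gives the reverse inequality with the same absorption trick.

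For $s=2$ I would write the difference of Laplacians as
\begin{equation*}
-\Delta f + \Delta_A f = 2iA\cdot\nabla f + i(\diver A) f - |A|^2 f,
\end{equation*}
and bound each piece via the same Sobolev embedding: the transport term by $\|A\|_{L^6}\|\nabla f\|_{L^3}\lesssim\|A\|_{\dot H^1}\|f\|_{H^1}^{1/2}\|f\|_{H^2}^{1/2}$, the $\diver A$ term by $\|\diver A\|_{L^2}\|f\|_{L^\infty}\lesssim \|A\|_{\dot H^1}\|f\|_{H^2}$ using $H^2(\R^3)\hookrightarrow L^\infty$, and the quadratic term by $\|A\|_{L^6}^2\|f\|_{L^6}\lesssim \|A\|_{\dot H^1}^2\|f\|_{H^1}$. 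Young's inequality absorbs the top-order $\|f\|_{H^2}$ contribution; the reverse inequality requires, in addition, the spectral interpolation $\|f\|_{H_A^1}^2\le \|f\|_{L^2}\|f\|_{H_A^2}$ (valid for any non-negative self-adjoint operator) in order to convert the leftover $\|f\|_{H^1}$ contribution into a quantity controlled by $\|f\|_{L^2}^{1/2}\|f\|_{H_A^2}^{1/2}$, after which a final application of Young closes the estimate.

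The cases $s\in\{-1,-2\}$ follow by duality from the self-adjointness of $-\Delta_A$, and all remaining $s\in[-2,2]$ by complex interpolation of the scales $(H_A^\sigma)$ and $(H^\sigma)$, using the identification $[L^2,H_A^2]_\theta = H_A^{2\theta}$ that follows from the spectral theorem applied to $-\Delta_A$. The main obstacle I anticipate is the $s=2$ case, where the quadratic term $|A|^2 f$ must be estimated with only $A\in\dot H^1$; this saturates the embedding $\dot H^1\hookrightarrow L^6$ twice and depends crucially on the dimension being exactly three, which is also why the exponent $n$ in the statement cannot in general be taken to be $1$.
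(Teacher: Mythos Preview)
The paper does not prove this lemma itself; it simply cites \cite[Lemma 2.2]{NW1} and \cite[Lemma 2.5]{Wada2012}. Your strategy---integer endpoints by direct expansion and Sobolev embedding, negative integers by duality, intermediate $s$ by complex interpolation using the spectral identification $[L^2,H_A^2]_\theta=H_A^{2\theta}$---is exactly the standard route followed in those references, so the proposal is essentially correct and matches what the paper relies on.

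One point needs sharpening. In the $s=2$ step your bound for the $(\diver A)f$ term,
\[
\|(\diver A)f\|_{L^2}\le\|\diver A\|_{L^2}\|f\|_{L^\infty}\lesssim\|A\|_{\dot H^1}\|f\|_{H^2},
\]
carries the full top-order power of $\|f\|_{H^2}$ and therefore \emph{cannot} be absorbed by Young's inequality unless $\|A\|_{\dot H^1}$ is small. To close the argument for arbitrary $A\in\dot H^1$ you should instead use the sharper embedding $\|f\|_{L^\infty}\lesssim\|f\|_{H^{3/2+\varepsilon}}$ and interpolate, obtaining
\[
\|(\diver A)f\|_{L^2}\lesssim\|A\|_{\dot H^1}\|f\|_{L^2}^{\theta}\|f\|_{H^2}^{1-\theta}
\]
for some $\theta>0$, which Young then absorbs with a constant depending polynomially on $\langle\|A\|_{\dot H^1}\rangle$. (In the paper's applications the Coulomb gauge $\diver A=0$ is always in force, so this term in fact vanishes; but the lemma as stated does not assume it.)
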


\subsection{The polar factorization technique and the rigorous notion of weak solutions}\label{sse:polar}
We now recall some facts about the polar factorization technique. For more details and general results  we refer the reader to \cite{AM_B}  and references therein.
As already discussed in \cite{AM_CMP, AM_2D, AMZ, AHMZ}, in the context of quantum fluid models the hydrodynamic state is suitably described in terms of the pair $(\sqrt{\rho}, \Lambda)$, that in turn will determine the charge and current densities by $\rho=(\sqrt{\rho})^2, J=\sqrt{\rho}\Lambda$, respectively.
This is promptly motivated by the fact that the total energy defined in \eqref{eq:en} provides natural bounds for $(\sqrt{\rho}, \Lambda)$, see also \eqref{eq:tot_ene} below.
Moreover, the polar factorization approach allows to define the hydrodynamic state $(\sqrt{\rho}, \Lambda)$ almost everywhere in the whole space. Given a function $\psi\in H^1(\R^3;\C)$, we define the set of its polar factors by
\begin{equation}\label{eq:pol}
P(\psi):=\{\varphi\in L^{\infty}(\R^3)\,s.t.\,
\|\varphi\|_{L^{\infty}}\leq 1,\,\psi=|\psi|\varphi\mbox{ a.e. in }\R^3\}.
\end{equation}
Observe that $\varphi$ is uniquely determined $|\psi|dx$ almost everywhere, while it is not uniquely defined in the nodal region $\{\psi=0\}$. We have the following result \cite[Lemma 5.3]{ADM}.

\begin{lemma}\label{le:polar}
Let $\psi\in H^1(\R^3)$, $A\in L^3(\R^3)$, $\varphi\in P(\psi)$, and let us define 
\begin{equation*}
\sqrt{\rho}=|\psi|, \quad \Lambda=\RE(\bar{\varphi}\nabla_A\psi).
\end{equation*}
Then we have
\begin{itemize}
\item[(i)] $\sqrt{\rho}\in H^1(\R^3)$, with $\nabla\sqrt{\rho}=-\IM(\bar\varphi\nabla_A\psi)=\RE(\bar{\varphi}\nabla\psi)$;
\item[(ii)] the following identity holds almost everywhere on $\R^3$:
\begin{equation}\label{eq:id-pol}
\RE\left(\overline{\nabla_A\psi}\otimes\nabla_A\psi\right)
=\Lambda\otimes\Lambda+\nabla\sqrt{\rho}\otimes\nabla\sqrt{\rho};
\end{equation}
\item[(iii)] by defining $J=\sqrt{\rho}\Lambda$, we have that the following identity
\begin{equation*}
    \nabla\wedge J+\rho\nabla\wedge A=2\nabla\sqrt{\rho}\wedge\Lambda
\end{equation*}
holds in the distributional sense.
\end{itemize}
Furthermore, let $\{\psi_n\}\subseteq H^1(\R^3)$, $\{A_n\}\subseteq L^3(\R^3)$ be such that $\psi_n\rightarrow \psi$ in $H^1$ and $A_n\rightarrow A$ in $L^3$. 
Let moreover $\varphi_n\in P(\psi_n)$, and set $\sqrt{\rho}_n:=|\psi_n|$, $\Lambda_n:=\IM(\bar{\varphi_n}\nabla_{A_n}\psi_n)$. 
Then we have
\begin{equation}\label{eq:ssp}
\sqrt{\rho}_n\rightarrow\sqrt{\rho}\mbox{ in }H^1(\R^3),\quad \Lambda_n\rightarrow \Lambda\mbox{ in }L^2(\R^3).
\end{equation}
\end{lemma}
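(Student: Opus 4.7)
The plan is to derive (i)--(iii) by combining a smooth approximation of $\psi$ with pointwise algebraic identities, and then to handle the stability statement separately, where the main delicate point is the non-uniqueness of polar factors on the zero set of the limit.

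For (i) I would use the classical regularization $|\psi|_{\eps}:=\sqrt{|\psi|^{2}+\eps^{2}}$, whose pointwise gradient is $\nabla|\psi|_{\eps}=\RE(\bar{\psi}\nabla\psi)/|\psi|_{\eps}$. As $\eps\downarrow 0$ this converges almost everywhere to $\RE(\bar{\varphi}\nabla\psi)$: on $\{\psi\neq 0\}$ the polar factor is uniquely $\varphi=\psi/|\psi|$, and on $\{\psi=0\}$ both sides vanish, by the classical fact that $\nabla\psi=0$ a.e.~on the nodal set of an $H^{1}$ function. Dominated convergence upgrades this to $L^{2}$, giving $\nabla\sqrt{\rho}=\RE(\bar{\varphi}\nabla\psi)$. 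The alternative expression $-\IM(\bar{\varphi}\nabla_{A}\psi)$ then follows from the decomposition $\bar{\varphi}\nabla_{A}\psi=-i\bar{\varphi}\nabla\psi-A|\psi|$: since $A|\psi|$ is real, $\IM(\bar{\varphi}\nabla_{A}\psi)=\IM(-i\bar{\varphi}\nabla\psi)=-\RE(\bar{\varphi}\nabla\psi)$.

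For (ii) I would argue pointwise. On $\{\psi\neq 0\}$, set $u:=\nabla_{A}\psi$ and $w_{j}:=\bar{\varphi}u_{j}$; since $|\varphi|=1$, the factorization $\bar{u}_{i}u_{j}=\overline{w_{i}}w_{j}$ holds, and from (i) one has $w_{j}=\Lambda_{j}-i(\nabla\sqrt{\rho})_{j}$, whose tensor product has real part exactly $\Lambda_{i}\Lambda_{j}+(\nabla\sqrt{\rho})_{i}(\nabla\sqrt{\rho})_{j}$. On $\{\psi=0\}$ every factor involved vanishes a.e.~($\nabla_{A}\psi=-i\nabla\psi-A\psi=0$), so the identity is automatic. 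For (iii), using $\sqrt{\rho}\,\bar{\varphi}=\bar{\psi}$ a.e.~one rewrites $J=\RE(\bar{\psi}\nabla_{A}\psi)=\IM(\bar{\psi}\nabla\psi)-A\rho$. Smoothly approximating $\psi$ in $H^{1}$, the symmetric-Hessian contribution $\bar{\psi}\,\nabla\wedge\nabla\psi$ drops, leaving the distributional identity $\nabla\wedge\IM(\bar{\psi}\nabla\psi)=\IM(\nabla\bar{\psi}\wedge\nabla\psi)$. Combined with $\nabla\wedge(\rho A)=\rho\,\nabla\wedge A+\nabla\rho\wedge A$, this reduces the left-hand side of (iii) to $\IM(\nabla\bar{\psi}\wedge\nabla\psi)-\nabla\rho\wedge A$; a short computation, using $\nabla\rho=2\RE(\bar{\psi}\nabla\psi)$ and $|\varphi|=1$ exactly as in (ii), shows this coincides a.e.~with $2\nabla\sqrt{\rho}\wedge\Lambda$.

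For the stability, I expect the only real difficulty to be the $L^{2}$-convergence $\Lambda_{n}\to\Lambda$, since $\sqrt{\rho}_{n}\to\sqrt{\rho}$ in $H^{1}$ is immediate from (i) and continuity of $\psi\mapsto|\psi|$ on $H^{1}$. The obstruction is that each $\varphi_{n}$ is only unambiguous on $\{\psi_{n}\neq 0\}$, and the nodal sets may move in a complicated way. My plan is to extract a subsequence along which $\psi_{n}\to\psi$ pointwise a.e.: then on $\{\psi\neq 0\}$ one has $\psi_{n}\neq 0$ eventually at each such point, so $\varphi_{n}=\psi_{n}/|\psi_{n}|\to\psi/|\psi|=:\varphi$ a.e., while on $\{\psi=0\}$ the identity $\nabla\psi=0$ a.e.~makes the limit $\Lambda$ independent of the behavior of $\varphi_{n}$ there. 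Splitting
\[
\Lambda_{n}-\Lambda=\RE\bigl(\bar{\varphi}_{n}(\nabla\psi_{n}-\nabla\psi)\bigr)+\RE\bigl((\bar{\varphi}_{n}-\bar{\varphi})\nabla\psi\bigr)-(A_{n}-A)|\psi_{n}|-A(|\psi_{n}|-|\psi|),
\]
the first, third and fourth terms vanish in $L^{2}$ by the $H^{1}$-convergence of $\psi_{n}$, the $L^{3}$-convergence of $A_{n}$ and H\"older (with the uniform $L^{6}$ bound on $\psi_{n}$); for the middle term, dominated convergence applies since $|\bar{\varphi}_{n}-\bar{\varphi}|\leq 2$ and $\nabla\psi\in L^{2}$, together with the a.e.~pointwise convergence just established. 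Since the limit is intrinsic, a sub-subsequence argument promotes this to convergence of the full sequence.
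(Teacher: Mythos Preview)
Your argument is correct and follows the standard route for polar factorization results. Note, however, that the paper does not actually supply its own proof of this lemma: it simply states the result and refers to \cite[Lemma 5.3]{ADM}. So there is no in-paper proof to compare against; your write-up is precisely the kind of argument one finds in that reference and in the related works \cite{AM_CMP, AM_2D, AM_B}.

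Two small remarks. First, in your stability decomposition for $\Lambda_n-\Lambda$ you wrote $\RE$ where it should be $\IM$: since $\Lambda=\RE(\bar{\varphi}\nabla_A\psi)=\IM(\bar{\varphi}\nabla\psi)-A|\psi|$, the splitting reads
\[
\Lambda_n-\Lambda=\IM\bigl(\bar{\varphi}_n(\nabla\psi_n-\nabla\psi)\bigr)+\IM\bigl((\bar{\varphi}_n-\bar{\varphi})\nabla\psi\bigr)-(A_n-A)|\psi_n|-A(|\psi_n|-|\psi|).
\]
This is a harmless slip, as the estimates you give for each term are unaffected. Second, you invoke ``continuity of $\psi\mapsto|\psi|$ on $H^1$'' for the convergence $\sqrt{\rho_n}\to\sqrt{\rho}$ without further comment; this is true but not entirely trivial. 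It is cleanest to note that the very same two-term splitting (with $\RE$ in place of $\IM$, and no magnetic terms) handles $\nabla\sqrt{\rho_n}-\nabla\sqrt{\rho}$ directly, so you do not need to appeal to an external continuity result.
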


Let us notice that, 
by the definition of the hydrodynamic state $(\sqrt{\rho}, \Lambda)$ as given in previous Lemma, the identity for the current density $J$ in (iii) above is consistent with the Madelung transformation \eqref{eq:mad}. Moreover, by Theorem 6.19 in \cite{LL}, we have that $\nabla\psi=0$ a.e. in $\{\psi=0\}$ and consequently $\Lambda=0$ a.e. on $\{\rho=0\}$. This implies that the convective term in the equation for the momentum density may be written as
\begin{equation*}
    \frac{J\otimes J}{\rho}=\Lambda\otimes\Lambda.
\end{equation*}
Analogously the total energy in \eqref{eq:en} may be expressed as
\begin{equation}\label{eq:tot_ene}
\mathcal E(t)=\int_{\R^3}\frac12|\nabla\sqrt{\rho}|^2+\frac12|\Lambda|^2+f(\rho)+\frac12|E|^2+\frac12|B|^2\,dx.
\end{equation}
By using identity \eqref{eq:id-pol}, the definitions \eqref{eq:em_f} and the fact that $\diver A=0$ in \eqref{eq:MS}, we can also show that the energy functional in \eqref{eq:tot_ene} actually equals the total energy associated to system \eqref{eq:MS}, given by
\begin{equation}\label{eq:tot_ene_m}
\mathcal E(t)  = 
\int_{\R^3}\frac12|(-i\nabla-A)\psi|^2+f(|\psi|^2)+\frac12|\d_tA|^2+\frac12|\nabla A|^2\,dx.
\end{equation}
Consequently, thanks to the polar factorization, there will be no ambiguity in using the same notation either for the total energy associated the fluid system \eqref{eq:qmhd} and the one for the wave dynamics \eqref{eq:MS}.
\newline
Furthermore, let us notice that the quantum pressure term appearing in the equation for the current density in \eqref{eq:qmhd} can also be written in the following form,
\begin{equation}\label{eq:bohm}
\frac12\rho\nabla\left(\frac{\Delta\sqrt{\rho}}{\sqrt{\rho}}\right)
=\frac14\nabla\Delta\rho-\diver(\nabla\sqrt{\rho}\otimes\nabla\sqrt{\rho}).
\end{equation}
Consequently, it is possible to rewrite the equation for the current density in the following way
\begin{equation}\label{eq:mom_w}
    \d_tJ+\diver\left(\Lambda\otimes\Lambda+\nabla\sqrt{\rho}\otimes\nabla\sqrt{\rho}\right)+\nabla P(\rho)=\rho E+J\wedge B+\frac14\nabla\Delta\rho.
\end{equation}

Let us notice that, in this form, all terms appearing in \eqref{eq:mom_w} are well-defined for arbitrary finite energy states, at least in the sense of distribution, with the exception of the magnetic force $J\wedge B$, that in general is not known to be locally integrable.

\begin{definition}\label{def:fews}
Let $0<T\leq\infty$, $\rho_0, J_0, E_0, B_0\in L^1_{loc}(\R^3)$, such that $\diver E_0 =\rho_0$, $\diver B_0=0$ in the distributional sense and
\begin{equation*}
\mathcal E_0:=
\int_{\R^3}\frac12|\nabla\sqrt{\rho_0}|^2+\frac12\frac{|J_0|^2}{\rho_0}
+f(\rho_0)+\frac12|E_0|^2+\frac12|B_0|^2\,dx<\infty.
\end{equation*} 
We say that $(\rho, J, E, B)$ is a finite energy weak solution to the Cauchy problem \eqref{eq:qmhd}-\eqref{eq:qmhd_id} on the space-time slab $[0,T)\times\R^3$ if the following conditions are satisfied.
\begin{itemize}
\item[(i)] (hydrodynamic state) there exists $(\sqrt{\rho}, \Lambda)$, with $\sqrt{\rho}\in L_{\mathrm{loc}}^2(0, T;H^1(\R^3))$ and $\Lambda\in L_{\mathrm{loc}}^2(0,T;L^2(\R^3))$, such that $\rho=(\sqrt{\rho})^2$, $J=\sqrt{\rho}\Lambda$;
\item[(ii)] (electromagnetic field) $E,B\in L^2_{\mathrm{loc}}((0,T);L^2(\R^3))$;
\item[(iii)] (Lorentz force) $F_L:=\rho E+J\wedge B\in L^1_{\mathrm{loc}}((0,T)\times \R^3)$;
\item[(iv)] (continuity equation) for every $\eta\in\mathcal{C}^{\infty}_c([0,T)\times\R^3)$,
\begin{equation}\label{eq:continuity}
\int_0^T\int_{\R^3}\rho\partial_t\eta+J\cdot\nabla\eta\,dxdt+\int_{\R^3}\rho_0(x)\eta(0,x)dx=0;
\end{equation}
\item[(v)] (momentum equation) for every $\zeta\in\mathcal{C}^{\infty}_c([0,T)\times\R^3;\R^3)$,
\begin{equation}\label{eq:momentum}
\begin{split}
\int_0^T\int_{\R^3}J&\partial_t\zeta+\nabla\zeta:\big(\nabla\sqrt{\rho}\otimes\nabla\sqrt{\rho}+\Lambda\otimes\Lambda\big)+(\rho E+J\wedge B)\cdot\zeta\\
&-\frac{1}{4}\rho\Delta\diver\zeta+P(\rho)\diver\zeta dxdt+\int_{\R^3}J_0(x)\cdot\zeta(0,x)dx=0;
\end{split}
\end{equation}
\item[(vi)] (Maxwell equations) the equations
\begin{equation}\label{eq:Max}
\begin{cases}
\diver E=\rho,\quad\nabla\wedge E=-\partial_t B\\
\diver B=0,\quad\nabla\wedge B=J+\partial_t E,
\end{cases}
\end{equation}
are satisfied in the sense of distributions;
\item[(vii)] (finite energy) the total energy defined in \eqref{eq:tot_ene} is finite for a.e. $t\in[0, T)$ and we have $\mathcal E(t)\leq \mathcal E_0$;
\item[(viii)] (generalized irrotationality condition) for a.~e.~$t\in(0,T)$,
\begin{equation}\label{eq:gen_irr}
\nabla\wedge J+\rho B=2\nabla\sqrt{\rho}\wedge\Lambda
\end{equation}
holds true in $\mathcal{D}'(\R^3)$;
\end{itemize}
If we can take $T=\infty$ we say that the solution is global.
\end{definition}

\begin{remark}\label{rmk:gic}
Let us consider a sufficiently smooth solution, for which it is also possible to define the velocity field $v$ such that $J=\rho v$, then it is straightforward to check that condition \eqref{eq:gen_irr} is equivalent to the following identity
\begin{equation*}
\rho\left(\nabla\wedge v+B\right)=0.
\end{equation*}
Hence for smooth solutions with a nowhere vanishing mass density, \eqref{eq:gen_irr} is equivalent to the (scaled) irrotationality condition assumed in \cite{GM}. Thus, in some generalized sense, our framework is compatible with the one studied in \cite{GM}, see also \cite{G, GP, GIP}. On the other hand, let us stress that the solutions under our consideration here do carry vorticity, which may be concentrated in the vacuum region $\{\rho=0\}$. 
Consequently, our framework embodies in the theory the presence of quantized vortices, which are relevant in the study of quantum fluids \cite{TKT}.
\end{remark}
\subsection{Weak solutions to the nonlinear Maxwell-Schr\"odinger system}
We conclude this section by introducing some notations related to weak solutions for the nonlinear Maxwell-Schr\"odinger system \eqref{eq:MS}. 

Let $T>0$ and $(\psi_0, A_0, A_1)\in M^{1, 1}$, see definition \eqref{eq:M_ss}. We say that $(\psi, A)$ is a weak $M^{1, 1}$-solution to \eqref{eq:MS} on $[0, T]\times\R^3$ with initial data $(\psi_0, A_0, A_1)$ if
\begin{equation*}
(\psi,A)\in \big(L_T^{\infty}H^1\cap W_T^{1,\infty}H^{-1}\big)\times\big(L_T^{\infty}H^{1}\cap W_T^{1,\infty}L^2\cap W_T^{2,\infty}H^{-1}\big),
\end{equation*}
the two equations in \eqref{eq:MS} are satisfied in $H^{-1}$ for a.e.~$t\in[0, T]$, and we have 
$(\psi(0), A(0), \d_tA(0))=(\psi_0, A_0, A_1)$. For the sake of conciseness, we are going to use the notation $(\psi, A)\in L^\infty_TM^{s, \sigma}$ to actually mean that 
$(\psi, A, \d_tA)\in L^\infty_TM^{s, \sigma}$.

The existence of global in time, weak $M^{1, 1}$-solutions to \eqref{eq:MS} can be obtained by a standard regularization approach that passes through the analysis of an approximating system and suitable compactness estimates for the approximating solutions. In particular, in \cite{ADM} a Yosida-type regularization method was used -- see also \cite{GNS,AMS-2017-globalFinErgNLS}, where a vanishing viscosity method is used to study respectively the Maxwell-Schr\"odinger system and the NLS equation with critical, time-dependent magnetic potentials. For an overview of local and global well-posedness results for the Maxwell-Schr\"odinger system and its nonlinear variant \eqref{eq:MS}, we refer to \cite{NT, Tsu, NW1, NW2, BT, AMS, Geyer}

We state the following result, which can be proved by suitably adapting the analysis done in \cite{ADM}.
\begin{proposition}\label{pr:exMS}
Given $(\psi_0, A_0, A_1)\in M^{1, 1}$, there exists a global in time, weak $M^{1,1}$-solution $(\psi, A)$ to \eqref{eq:MS} with initial data $(\psi_0, A_0, A_1)$, satisfying
$$\|(\psi,A,\partial_t A)\|_{L_T^{\infty}M^{1,1}}\lesssim_T \|(\psi_0,A_0,A_1)\|_{M^{1,1}}$$
for every $T>0$, and such that $\mathcal E(t)\leq \mathcal E(0)$ for a.e. $t>0$, where the energy $\mathcal E$ is defined in \eqref{eq:tot_ene_m}.
\end{proposition}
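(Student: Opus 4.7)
\textbf{Plan for Proposition \ref{pr:exMS}.} My plan is to construct the weak solution via a Yosida-type regularization, closely mirroring the scheme of \cite{ADM} but with the extra Hartree and pure-power nonlinearities included. For $\varepsilon>0$ set $J_\varepsilon:=(1-\varepsilon\Delta)^{-1}$ and consider the regularized system
\begin{equation*}
\begin{cases}
i\partial_t \psi^\varepsilon = -\tfrac12\Delta_{J_\varepsilon A^\varepsilon}\psi^\varepsilon+J_\varepsilon\!\left[(-\Delta)^{-1}|J_\varepsilon\psi^\varepsilon|^2\right]\psi^\varepsilon+|J_\varepsilon\psi^\varepsilon|^{2(\gamma-1)}\psi^\varepsilon,\\
\square A^\varepsilon=\mathbb{P}\,J_\varepsilon\!\RE\bigl(\overline{J_\varepsilon\psi^\varepsilon}\,(-i\nabla-J_\varepsilon A^\varepsilon)J_\varepsilon\psi^\varepsilon\bigr),\quad \diver A^\varepsilon=0,
\end{cases}
\end{equation*}
with regularized initial data (e.g.\ $J_\varepsilon\psi_0$, $J_\varepsilon A_0$, $J_\varepsilon A_1$). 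The presence of the smoothing operators $J_\varepsilon$ inside each nonlinearity makes the right-hand side Lipschitz on bounded sets of $L^2\times H^1$, so a standard contraction argument in $C([0,T_\varepsilon];M^{1,1})$ yields a local solution. Because $J_\varepsilon$ is self-adjoint and commutes with derivatives, the structural cancellations underlying the physical energy still close on the regularized level, producing a conserved (or non-increasing, depending on how the cubic/quintic terms are handled) functional $\mathcal E_\varepsilon(t)$ that converges pointwise to $\mathcal E(t)$ as $\varepsilon\to 0$, and the uniform bound $\mathcal E_\varepsilon(t)\le \mathcal E_\varepsilon(0)\lesssim \mathcal E(0)$ allows the local solution to be extended globally.

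Next I would collect the uniform estimates. Control of $\mathcal E_\varepsilon$ and of the charge $\|\psi^\varepsilon\|_{L^2}$ (preserved by the Schr\"odinger flow) gives $\psi^\varepsilon$ bounded in $L^\infty_TH^1$ (after using Lemma~\ref{le:quattro} to pass from the magnetic to the standard Sobolev norm, using the uniform $L^\infty_T\dot H^1$ bound on $A^\varepsilon$) and $(A^\varepsilon,\partial_tA^\varepsilon)$ bounded in $L^\infty_T(H^1\times L^2)$. Inspecting each nonlinearity using Sobolev embedding, Hardy--Littlewood--Sobolev, and the assumption $\gamma<3$ shows that $\partial_t\psi^\varepsilon$ is bounded in $L^\infty_TH^{-1}$ and that $\partial_t^2 A^\varepsilon$ is bounded in $L^\infty_TH^{-1}$. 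These bounds give, by Banach--Alaoglu, weak-$*$ limits $(\psi,A,\partial_tA)\in L^\infty_TM^{1,1}$ along a subsequence, and by Aubin--Lions they upgrade to strong convergence $\psi^\varepsilon\to\psi$ and $A^\varepsilon\to A$ in $C([0,T];L^2_{\mathrm{loc}})$, hence in $L^p_T L^q_{\mathrm{loc}}$ for every $p<\infty$ and $q<6$ by interpolation.

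With the strong local convergence plus the uniform $L^\infty_TH^1$ bound on $\psi^\varepsilon,A^\varepsilon$, passing to the limit in each nonlinear term is fairly routine: products of a strongly converging factor in $L^p_TL^q_{\mathrm{loc}}$ with a weakly converging factor in $L^\infty_TL^r$ identify the limit; the Hartree term uses Hardy--Littlewood--Sobolev together with compactness of $\psi^\varepsilon$; the power nonlinearity uses Lemma~\ref{esti_pure}; and the commutators $[J_\varepsilon,\cdot]$ vanish as $\varepsilon\to0$ in the relevant topologies. This identifies $(\psi,A)$ as a weak $M^{1,1}$-solution, and the announced quantitative bound follows from the uniform estimates. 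Finally, the energy inequality $\mathcal E(t)\le \mathcal E(0)$ is obtained from $\mathcal E_\varepsilon(t)\le \mathcal E_\varepsilon(0)$ by passing to the $\liminf$ on the left using weak-$*$ lower semicontinuity of each quadratic term and Fatou for $f(|\psi^\varepsilon|^2)$, while the right-hand side converges by strong convergence of the regularized data.

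The main obstacle, and the reason one cannot simply invoke a black box, is justifying the regularized energy identity at the level of $\mathcal E_\varepsilon$: the Yosida cutoff must be placed on \emph{both} sides of each nonlinearity so that testing the Schr\"odinger equation against $\partial_t\psi^\varepsilon$ and the wave equation against $\partial_tA^\varepsilon$ produces pairings that telescope into a time derivative. In particular the coupling term between $\mathbb{P}J^\varepsilon$ and $\partial_tA^\varepsilon$ must exactly reproduce the time derivative of $\tfrac12\|\nabla_{J_\varepsilon A^\varepsilon}J_\varepsilon\psi^\varepsilon\|_{L^2}^2$ arising from the Schr\"odinger side; this is the delicate algebraic book-keeping that \cite{ADM} performs in the magnetic-free Maxwell--Schr\"odinger case, and which must be redone here in the Coulomb gauge with the additional $\phi\psi$ and $|\psi|^{2(\gamma-1)}\psi$ contributions. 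Once this identity is in hand, everything else is a compactness argument of standard type.
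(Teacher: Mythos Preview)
Your approach is correct and is precisely what the paper does: it does not give a self-contained proof of Proposition~\ref{pr:exMS} but simply states that it ``can be proved by suitably adapting the analysis done in \cite{ADM},'' where a Yosida-type regularization is used, and this is exactly the scheme you outline. One small correction: \cite{ADM} already treats the \emph{nonlinear} Maxwell--Schr\"odinger system with magnetic Laplacian (not a ``magnetic-free'' case), so the algebraic book-keeping for the coupling term you flag is already carried out there; the only genuine adaptation needed here concerns the range of $\gamma$ in the power nonlinearity.
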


\section{A priori estimates for weak solutions to the Maxwell-Schr\"odinger system}
\label{sec:apr}
In this section we prove suitable a priori estimates for solutions to the nonlinear Maxwell-Schr\"odinger system \eqref{eq:MS}. Our main goal here is to show the following uniform bounds for weak $M^{1,1}$-solutions, assuming the extra-regularity assumption $\sigma>1$ on the initial magnetic potential.

\begin{proposition}\label{th:apriori}
	Let $\gamma\in(1,3)$, $\sigma\in(1,\frac76)$, and let $(\psi,A)$ be a global weak $M^{1,1}$-solution to the system \eqref{eq:MS}. If we further assume that the initial data satisfy $(\psi_0,A_0,A_1)\in M^{1,\sigma}$, then the following estimate
	\begin{equation}\label{eq:apriori_uno}
\|\psi\|_{L_T^2W^{1/2,6}}+\|A\|_{L_T^{\infty}H^{\sigma}\cap L_T^2L^{\infty}}
\leq C(T,\|(\psi,A)\|_{L_T^{\infty}(H^1\times H^1)}, \|(A_0, A_1)\|_{\Sigma^\sigma}),
\end{equation}
holds for any $T\in(0,\infty)$.
\end{proposition}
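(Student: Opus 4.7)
The proof is a continuity/bootstrap argument exploiting the structural compatibility between the Koch-Tzvetkov smoothing estimate for the magnetic Schr\"odinger equation (Lemma \ref{le:quattordici}) and the Strichartz estimates for the wave equation (Lemma \ref{le:uno}). Introduce the master quantity
\[
\mathcal N(T) := \|\psi\|_{L^2_T W^{1/2,6}} + \|A\|_{L^\infty_T H^\sigma} + \|A\|_{L^2_T L^\infty}.
\]
The plan is to prove an implicit estimate of the form
\[
\mathcal N(T) \le C_1(T) + C_2 T^\theta (1 + \mathcal N(T))^N,
\]
for some $\theta,N>0$ and for constants depending only on $\|(\psi_0, A_0, A_1)\|_{M^{1,\sigma}}$ and on $\|(\psi, A)\|_{L^\infty_T(H^1\times H^1)}$. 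A standard continuity argument closes this inequality for short times; the global bound then follows by iterating over a finite partition of $[0, T]$, since the baseline $L^\infty_T(H^1 \times H^1)$ norms remain controlled by the conserved energy (Proposition \ref{pr:exMS}).

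For the Schr\"odinger side, I would apply Lemma \ref{le:quattordici} with $s = 1$ and $\alpha = 1/2$. The hypothesis $\sigma > 1$ precisely avoids the forbidden case $(\alpha, \sigma) = (1/2, 1)$ and legitimizes the reading $L^{3/(2\alpha-1)} = L^\infty$. This yields
\[
\|\psi\|_{L^2_T W^{1/2,6}} \lesssim_T \langle \|A\|_{L^\infty_T H^\sigma \cap L^2_T L^\infty}\rangle^n \|\psi\|_{L^\infty_T H^1} + \|F\|_{L^2_T L^2},
\]
where $F = \phi\psi + |\psi|^{2(\gamma-1)}\psi$. The Hartree term is controlled by \eqref{esti_conv} together with $H^{1/2} \hookrightarrow L^2$, giving $\|\phi\psi\|_{L^2_T L^2} \lesssim T^{1/2}\|\psi\|_{L^\infty_T H^1}^3$. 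For the pure-power term, Lemma \ref{esti_pure} (or a direct H\"older splitting) reduces the estimate to bounding $\psi$ in $L^{4\gamma-2}_{t,x}$, finite for $\gamma < 3$ by interpolation between $L^\infty_T L^6$ (from $H^1$) and $L^2_T L^p$ for $p$ arbitrarily large (obtained from $L^2_T W^{1/2,6}$ and the borderline embedding $W^{1/2,6}\hookrightarrow L^p$, $p<\infty$). The resulting self-coupling through $\mathcal N(T)$ is polynomial, with a positive power of $T$ as prefactor.

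For the wave side, I would apply Lemma \ref{le:uno} to $\square A = \mathbb{P} J$ with source pair $(q_0, r_0) = (\infty, 2)$, so that the required source norm is $L^1_T H^{\sigma - 1}$. The response pair $(\infty, 2)$ produces $\|A\|_{L^\infty_T H^\sigma}$ at once. For $\|A\|_{L^2_T L^\infty}$, pick a wave-admissible pair $(q, r)$ with $q > 2$ close to $2$ and $\sigma - 2/q > 3/r$, so that Sobolev embedding gives $W^{\sigma - 2/q, r} \hookrightarrow L^\infty$ and H\"older in time yields $\|A\|_{L^2_T L^\infty} \lesssim T^{1/2 - 1/q}\|A\|_{L^q_T W^{\sigma - 2/q, r}}$; such $q$ exists precisely because $\sigma > 1$. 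The source decomposes, using $J = \IM(\bar\psi\nabla\psi) - A\rho$ and $\mathbb{P}\nabla\rho = 0$, as $\mathbb{P}J = \mathbb{P}\IM(\bar\psi\nabla\psi) - \mathbb{P}(A\rho)$. The first term I would estimate by Lemma \ref{le:tre} at regularity $s = \sigma - 1 \in (0, 1/6)$, distributing the small derivative budget $\sigma - 1 < 1/6$ so as to pair the $1/2$-derivative smoothing $\psi \in L^2_T W^{1/2, 6}$ with the baseline $\nabla\psi \in L^\infty_T L^2$ and integrating by Cauchy-Schwarz in time. The second, $\mathbb{P}(A\rho)$, is treated by the fractional Leibniz rule \eqref{frac_leibniz} combined with $A \in L^\infty_T H^\sigma$ and $\nabla\rho = 2\RE(\bar\psi\nabla\psi) \in L^\infty_T L^{3/2}$. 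Both contributions yield polynomial-in-$\mathcal N$ bounds with positive $T$-power.

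The principal obstacle, which dictates the narrow window $\sigma \in (1, 7/6)$, is precisely the $\mathbb{P}J$ estimate. The lower bound $\sigma > 1$ is doubly needed: to apply Lemma \ref{le:quattordici} at the endpoint $\alpha = 1/2$, and to find a wave-admissible $(q, r)$ realizing the Sobolev embedding into $L^\infty$. The upper bound $\sigma < 7/6$ keeps $\sigma - 1 < 1/6$, which is the exact derivative budget that the combined $L^\infty_T H^1 \cap L^2_T W^{1/2, 6}$ regularity can contribute to the bilinear current $\bar\psi \nabla\psi$; any larger $\sigma$ would require Sobolev regularity on $\psi$ unavailable from the energy bound alone. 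The Helmholtz projection annihilating $\nabla\rho$ is essential, as it removes the worst gradient piece of $\bar\psi\nabla\psi$ which would otherwise force either smallness or higher regularity of the data.
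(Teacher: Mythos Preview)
Your bootstrap scheme is plausible for $\gamma<\tfrac52$, but the treatment of the power nonlinearity breaks down in the range $\gamma\in[\tfrac52,3)$, which is precisely the new case the paper has to address. You estimate $\||\psi|^{2(\gamma-1)}\psi\|_{L^2_TL^2}=\|\psi\|_{L^{4\gamma-2}_{t,x}}^{2\gamma-1}$ and then claim that $\psi\in L^{4\gamma-2}_{t,x}$ follows for all $\gamma<3$ by interpolating $L^\infty_TL^6$ with $L^2_TL^p$, $p<\infty$. But this interpolation only reaches the diagonal space $L^8_{t,x}$: eliminating $\theta$ from $1/q=\theta/2$, $1/r=(1-\theta)/6$ gives the Schr\"odinger-admissible line $2/q+6/r=1$, whose diagonal is $q=r=8$. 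Hence $L^{4\gamma-2}_{t,x}$ is attainable only for $4\gamma-2\le 8$, i.e.\ $\gamma\le\tfrac52$, with the endpoint already borderline. For $\gamma>\tfrac52$ your $L^2_TL^2$ estimate on $F$ simply does not close within $\mathcal N(T)$, and the continuity argument collapses.

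The paper circumvents this by two devices that your outline is missing. First, rather than the Koch--Tzvetkov forcing space $L^2_TL^2$, the nonlinearity is placed in the endpoint Strichartz dual $L^2_TW^{1/2,6/5}$ and handled via Lemma~\ref{esti_pure}; this trades Lebesgue integrability of $\psi$ for a fractional derivative $\|\psi\|_{W^{1/2+\varepsilon,p}}$ with $p=6/(7-2\gamma)$. Second, the argument is \emph{sequential} rather than a simultaneous time-bootstrap: one first obtains $A\in L^{q}_TL^{r}$ for all wave-admissible pairs from the bare energy and $\Sigma^1$ data alone, then establishes the sub-endpoint bound $\psi\in L^2_TW^{1/2-\delta,6}$ by an \emph{iteration in the regularity index} (starting from $1/2-\delta_1$ with $\delta_1=\gamma-\tfrac52$ and bootstrapping upward), then uses this to upgrade $A$ to $L^\infty_TH^\sigma\cap L^2_TL^\infty$, and only at the end applies Lemma~\ref{le:quattordici} at $\alpha=1/2$. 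Your single master quantity $\mathcal N(T)$ conflates these steps and, as written, cannot absorb the power term in the supercritical-looking regime $\gamma\ge\tfrac52$.
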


The gain of local integrability for $\psi$ and the persistence of regularity for $A$, provided by estimate \eqref{eq:apriori_uno}, will be crucial in order to justify the derivation of the QMHD system \eqref{eq:qmhd}.

\begin{remark}
Proposition \ref{th:apriori} was already partly proved in \cite[Proposition 3.1]{AMS}, in the case when $\gamma\in(1,\frac52)$. Let us point out that in \cite{AMS} a different convention for $\gamma$ is adopted, as the non-linear term has the form $|\psi|^{\gamma-1}\psi$ therein.
\end{remark}

\begin{proof}[Proof of Proposition \ref{th:apriori}]
As the case $\gamma\in(1, \frac52)$ was already proved in \cite{AMS}, see the previous remark, here we focus on the case $\gamma\in[\frac52, 3)$. First of all, 
we claim that for any wave-admissible pair $(q,r)$ we have
\begin{equation}\label{eq:cirga}
	\|A\|_{L_T^qL_x^r}\lesssim_{T} \langle\|\psi\|_{L_T^{\infty}H^1}^2\rangle\langle\|A\|_{L_T^{\infty}H^1}\rangle + \|(A_0,A_1)\|_{\Sigma^1}.
\end{equation}
This can be proved by adapting the proof of estimate (3.6) in \cite{AMS}, and essentially follows from the Strichartz estimate \eqref{eq:kg_stri}. Next, we claim that for any $\delta\in(0,\frac12)$ we have
\begin{equation}\label{eq:quasi}
	\|\psi\|_{L_T^2W^{1/2-\delta,6}}\lesssim_{T}\langle\|(\psi,A)\|_{L_T^{\infty}(H^1\times H^1)}^n\rangle\langle\|(A_0,A_1)\|_{\Sigma^{\sigma}}^n\rangle.
\end{equation}
Let us denote by $(q_{\delta},r_{\delta})$ the wave-admissible pair with $r_{\delta}=\frac{3}{2\delta}$. Moreover, let us set $\delta_1=\delta_1(\gamma):=\gamma-\frac52\in[0,\frac12)$ and $p=p(\gamma):=\frac{6}{7-2\gamma}$. Then the following chain of inequalities may be obtained
\begin{equation}\label{ba_db}
	\begin{split}
		\|\psi\|_{L_T^2W^{1/2-\delta_1-\delta,6}}&\lesssim_T\langle\|A\|_{L_T^{\infty}H^{1}\cap L_T^{q_{\delta}}L^{r_{\delta}}}^n\rangle\|\psi\|_{L_T^{\infty}H^1}\\
		&\quad+\|\phi\psi\|_{L_T^2L^2}+\||\psi|^{2(\gamma-1)}\psi\|_{L_T^2W^{1/2-\delta_1-\delta,6/5}}\\
		&\lesssim_{T}\langle\|(\psi,A)\|_{L_T^{\infty}(H^1\times H^1)}^n\rangle\langle\|(A_0,A_1)\|_{\Sigma^{\sigma}}^n\rangle\\
		&\quad+\|\psi\|_{L_T^{\infty}H^1}^3 +\|\psi\|_{L_T^{\infty}L^6}^{2(\gamma-1)}\|\psi\|_{L_T^{\infty}W^{(1/2-\delta_1)-,p}},
	\end{split}
\end{equation}
where we used the magnetic Koch-Tzvetkov estimate \eqref{ext:nw} and the inhomogeneous Strichartz estimate \eqref{class_stri} in the former inequality, and the bounds \eqref{eq:cirga} and \eqref{esti_conv} together with Lemma \ref{esti_pure} in the latter. Combining estimate \eqref{ba_db} with the Sobolev embedding $H^1\hookrightarrow W^{1/2-\delta_1,p}$ we obtain
\begin{equation}\label{ba_dbu}
\|\psi\|_{L_T^2W^{1/2-\delta_1-\delta,6}}\lesssim_{T}\langle\|(\psi,A)\|_{L_T^{\infty}(H^1\times H^1)}^n\rangle\langle\|(A_0,A_1)\|_{\Sigma^{\sigma}}^n\rangle.
\end{equation}
When $\gamma=\frac52$, \eqref{ba_dbu} directly boils down to the desired estimate \eqref{eq:quasi}, for $\delta_1(\frac52)=0$. Next we consider the case $\gamma\in(\frac52,2+\frac{1}{\sqrt{2}}]$. Arguing as in \eqref{ba_db} we get
\begin{equation*}
	\begin{split}
		\|\psi\|_{L_T^2W^{1/2-,6}}&\lesssim_{T}\langle\|(\psi,A)\|_{L_T^{\infty}(H^1\times H^1)}^n\rangle\langle\|(A_0,A_1)\|_{\Sigma^{\sigma}}^n\rangle\\
		&\quad+\|\psi\|_{L_T^{\infty}L^6}^{2(\gamma-1)}\|\psi\|_{L_T^{2}W^{1/2-,p}}.
	\end{split}
\end{equation*}
The bound above, together with the embedding $H^1\cap W^{1/2-\delta_1,6}\hookrightarrow W^{1/2,p}$ and estimate \eqref{ba_dbu},  guarantees the validity of \eqref{eq:quasi}. In the remaining case $\gamma\in(2+\frac{1}{\sqrt{2}},3)$, we first show the boundedness of $\|\psi\|_{L_T^2W^{(1/2-\delta_2)-,6}}$ for some $\delta_2\in(0,\delta_1)$, then by iterating that same argument for a sufficient number of times, we eventually deduce estimate \eqref{eq:quasi}. Next, using \eqref{eq:quasi} and arguing as in the proof of estimate (3.1) in \cite{AMS}, we obtain the bound
\begin{equation}\label{eq:apa}
	\begin{split}
		&\|A\|_{L_T^2L^{\infty}}+\|(A,\partial_t A)\|_{L_T^{\infty}\Sigma^{\sigma}}\lesssim_{T}\langle\|(\psi,A)\|_{L_T^{\infty}(H^1\times H^1)}^n\rangle\langle\|(A_0,A_1)\|_{\Sigma^{\sigma}}^n\rangle.
	\end{split}
\end{equation}
We apply now Lemma \ref{le:quattordici} with $\alpha=1/2$, the Strichartz estimate \eqref{class_stri}, the bounds \eqref{eq:apa},\eqref{esti_conv} and Lemma \ref{esti_pure}, to obtain
\begin{equation*}
	\begin{split}
		\|\psi\|_{L_T^2W^{1/2,6}}&\lesssim_{T}\langle\|A\|_{L_T^{\infty}H^{\sigma}\cap L_{T}^2L^{\infty}}^n\rangle\|\psi\|_{L_T^{\infty}H^1}+\|\phi \psi\|_{L_T^2L^2}\\
		&\quad +\||\psi|^{2(\gamma-1)}\psi\|_{L_T^2W^{1/2,6/5}}\\
		&\lesssim_{T}\langle\|(\psi,A)\|_{L_T^{\infty}(H^1\times H^1)}^n\rangle\langle\|(A_0,A_1)\|_{\Sigma^{\sigma}}^n\rangle\\
		&\qquad +\|\psi\|_{L_T^{\infty}L^6}^{2(\gamma-1)}\|\psi\|_{L_T^2W^{1/2+\eps,p}},
	\end{split}
\end{equation*}
for every $\eps>0$. Let us choose $\eps,\delta>0$ small enough such that we have the embedding $H^1\cap W^{1/2-\delta,6}\hookrightarrow W^{1/2+\eps,p}$ (this choice is indeed always possible, as $p(\gamma)\in[3,6)$ for $\gamma\in[\frac52,3)$). The bound above combined with \eqref{eq:quasi} then yields the dispersive estimate 
$$\|\psi\|_{L_T^2W^{1/2,6}}\lesssim_{T}\langle\|(\psi,A)\|_{L_T^{\infty}(H^1\times H^1)}^n\rangle\langle\|(A_0,A_1)\|_{\Sigma^{\sigma}}^n\rangle,$$
which together with \eqref{eq:apa} gives the desired a priori estimate \eqref{eq:apriori_uno}.
\end{proof}

The a priori bounds proved in Proposition \ref{th:apriori} in particular imply that, for 
$M^{1, \sigma}$ initial data, $\sigma\in(1,\frac76)$, the solution preserves this regularity globally in time. 
Moreover, as a consequence of Proposition \ref{th:apriori} we also have that, by defining the hydrodynamic variables as usual, the associated Lorentz force $F_L:=\rho E + J\times B$ is well defined. More precisely, analogously to \cite[Proposition 3.2]{AMS}, we can show the following result.

\begin{proposition}\label{pr:wdlf}
Let the same assumptions of Proposition \ref{th:apriori} be satisfied, then $F_L\in L_T^2L^{1}$.
\end{proposition}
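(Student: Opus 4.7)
The plan is to split $F_L=\rho E+J\wedge B$ and estimate the two pieces separately, handling the magnetic term by pairing the extra regularity $\sigma>1$ on $A$ with the dispersive bound $\psi\in L_T^2W^{1/2,6}$ provided by Proposition \ref{th:apriori}.

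For the electric part, one first notes that $\rho=|\psi|^2\in L_T^\infty L^2$, since $\psi\in L_T^\infty H^1\hookrightarrow L_T^\infty L^4$ by Sobolev, while $E=-\partial_t A-\nabla\phi$ belongs to $L_T^\infty L^2$: indeed $\partial_t A\in L_T^\infty H^{\sigma-1}\hookrightarrow L_T^\infty L^2$, and $\nabla\phi=\nabla(-\Delta)^{-1}\rho$ is controlled in $L^2$ by $\|\rho\|_{L^{6/5}}$ via Hardy--Littlewood--Sobolev (or directly from finiteness of the energy \eqref{eq:tot_ene_m}). A crude H\"older estimate then yields $\rho E\in L_T^\infty L^1\hookrightarrow L_T^2 L^1$.

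For the magnetic part, the point is to find a H\"older pair matching the improved integrability of $B$ against that of $J$. Since $B=\nabla\wedge A\in L_T^\infty H^{\sigma-1}$ and $\sigma>1$, Sobolev embedding gives $B\in L_T^\infty L^q$ with $q:=6/(5-2\sigma)>2$, whose dual exponent is $q'=6/(1+2\sigma)<2$. We then estimate $J$ in $L_T^2L^{q'}$: from $|J|\leq|\psi|\,|\nabla_A\psi|$ and H\"older with $1/q'=1/p_1+1/2$, i.e.~$p_1=3/(\sigma-1)\geq 18$ for $\sigma\in(1,7/6]$, we get
\[
\|J\|_{L^{q'}}\leq\|\psi\|_{L^{p_1}}\|\nabla_A\psi\|_{L^2}.
\]
Since $(1/2)\cdot 6=3=\dim\R^3$, the borderline Sobolev embedding $W^{1/2,6}(\R^3)\hookrightarrow L^{p_1}(\R^3)$ holds for every finite $p_1\in[6,\infty)$, so the dispersive bound of Proposition \ref{th:apriori} upgrades to $\psi\in L_T^2L^{p_1}$. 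Combined with $\nabla_A\psi\in L_T^\infty L^2$ (bounded kinetic energy), this yields $J\in L_T^2L^{q'}$, and one more H\"older step gives $J\wedge B\in L_T^2 L^1$, finishing the proof.

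The main obstacle is precisely the H\"older pairing: the bare energy bounds alone produce only $J\in L_T^\infty L^{3/2}$ and $B\in L_T^\infty L^q$ with $q$ slightly above $2$, a combination that fails to be dual. Closing the gap forces us to invoke both the extra regularity $\sigma>1$ (to push $B$ into $L^q$ with $q>2$) and the dispersive gain of Proposition \ref{th:apriori} (to push $\psi$ into a sufficiently high $L^{p_1}$), with the critical Sobolev embedding $W^{1/2,6}\hookrightarrow L^{p_1}$ serving as the bridge between the two.
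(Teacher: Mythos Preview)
Your argument is correct and matches the approach the paper points to (it defers to \cite[Proposition~3.2]{AMS} rather than reproving it): split $F_L=\rho E+J\wedge B$, handle $\rho E$ by the crude $L^2\times L^2$ H\"older bound from the energy, and for $J\wedge B$ pair the Sobolev gain $B\in L_T^\infty L^{6/(5-2\sigma)}$ coming from $A\in L_T^\infty H^\sigma$ with the dispersive bound $\psi\in L_T^2W^{1/2,6}\hookrightarrow L_T^2L^{3/(\sigma-1)}$ from Proposition~\ref{th:apriori}. The critical Sobolev embedding $W^{1/2,6}(\R^3)\hookrightarrow L^p(\R^3)$ for all finite $p$ is indeed the bridge, exactly as you identify.
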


\section{Derivation of the continuity and momentum equation}\label{sec:der}
We now turn our attention to rigorously derive the hydrodynamic equations. Given $\sigma>1$ we consider a global, weak $M^{1,1}$-solution $(\psi,A)$ to the system \eqref{eq:MS}, with initial data $(\psi_0,A_0,A_1)\in M^{1,\sigma}$. 

By exploiting the a priori bound provided by Proposition \ref{th:apriori}, we are going to show that the hydrodynamic variables $(\rho, J, E, B)$, defined by \eqref{eq:mad}-\eqref{eq:em_f}, satisfy the weak formulations \eqref{eq:continuity} and \eqref{eq:momentum}. To this aim, we first recall that the integral formulation for the NLS equation in \eqref{eq:MS}, given by the Duhamel's formula 
\begin{equation}\label{eq:duh}
	\psi(t)=e^{\frac{i}{2}t\Delta}\psi_0-i\int_0^te^{\frac{i}{2}(t-s)\Delta}F(s)\,ds,
\end{equation}
holds as an identity in $H^{-1}$ for every $t\geq 0$ 
and in $H^1$ for a.e.~$t\geq 0$, where we set
\begin{equation}\label{eq:def_VF}
	F:=iA\cdot\nabla\psi+\big({\textstyle{\frac12}}|A|^2+\phi+|\psi|^{2(\gamma-1)}\big)\psi.
\end{equation}
Moreover, by standard arguments (see Chapter IV.2.4 in \cite{Sohr} for instance), it is also possible to show that, upon redefining the weak solution on a set of measure zero in 
$[0, T)$, we have that $\psi:[0, T)\to H^1(\R^3)$ is weakly continuous.
\newline 
Consider moreover a test function $\eta\in\mathcal C^{\infty}_c([0, T)\times\R^3)$, for some $T>0$. Since 
$(\psi, A)\in L^\infty_TM^{1, 1}$ satisfy
\begin{equation*}
\big(i\d_t+\frac12\Delta\big)\psi=F,
\end{equation*}
in $H^{-1}$ for a.e. $t\in[0, T]$, then $(\eta\psi, A)\in L^\infty_TM^{1, 1}$ satisfy
\begin{equation}\label{id:feta}
	(i\partial_t+{\textstyle{\frac12\Delta)}}(\eta\psi)=i\psi\partial_t\eta+F_{\eta},
	\end{equation}
where we defined
\begin{equation}\label{eq:F_eta}
	F_{\eta}:=\eta F+{\textstyle{\frac12\psi\Delta\eta}}+\nabla\eta\cdot\nabla\psi,
\end{equation}
with $F$ given by \eqref{eq:def_VF}. 

Next we collect a series of useful estimates for $F$ and $F_{\eta}$, which will be crucial in order to rigorously justify the exchanges of integration order appearing  through the derivation of (the weak formulations of) the continuity and momentum equation. 

\begin{lemma}
Let $\gamma\in(1, 3)$, $\sigma>1$, and $0<T<\infty$. Let $(\psi, A)$ be a weak $M^{1,1}$-solution to \eqref{eq:MS} in $[0, T]\times\R^3$, with initial data $(\psi_0, A_0, A_1)\in M^{1, \sigma}$. Let $F, F_\eta$ be defined as in \eqref{eq:def_VF}, \eqref{eq:F_eta}, respectively. Then we have
\begin{align}
\label{eq:est_V}
\|F\|_{L_T^2L^{3/2}}\lesssim_{T,\sigma}\, &1;\\
\label{eq:esfet}
\|F_{\eta}\|_{L_T^2L^{3/2}}\lesssim_{T,\sigma}\, &1;
\end{align}
Moreover, for any $t_1\in[0, T]$, we have
\begin{align}
\label{fub}
\Big\|\int_{t_1}^{t}\|e^{i(t-\tau)\Delta}F(\tau)\|_{L^3}\,d\tau\Big\|_{L_t^2(0,T)}
\lesssim_{T,\sigma}\, &1;\\
\label{fubeta}
	\Big\|\int_{t_1}^{t}\|e^{i(t-\tau)\Delta}F_{\eta}(\tau)\|_{L^3}\,d\tau
\Big\|_{L_t^2(0,T)}
\lesssim_{T,\sigma}\, &1.
\end{align}
\end{lemma}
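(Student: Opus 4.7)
The strategy is to reduce the four bounds to the a priori estimates furnished by Proposition \ref{th:apriori} (applied, if necessary, with an auxiliary exponent $\sigma'\in(1,\min(\sigma,\tfrac{7}{6}))$, using the inclusion $M^{1,\sigma}\hookrightarrow M^{1,\sigma'}$). I first establish \eqref{eq:est_V} by splitting $F$ according to \eqref{eq:def_VF} and estimating each summand in $L^2_T L^{3/2}$. The two magnetic contributions are handled uniformly in time via Hölder and the Sobolev embedding $H^1\hookrightarrow L^6$:
\begin{equation*}
\|A\cdot \nabla\psi\|_{L^{3/2}}\le \|A\|_{L^6}\|\nabla\psi\|_{L^2},\qquad \||A|^2\psi\|_{L^{3/2}}\le \|A\|_{L^6}^2\|\psi\|_{L^6}.
\end{equation*}
For the Hartree term, a combination of Hölder, Hardy--Littlewood--Sobolev and $H^1\hookrightarrow L^{12/5}$ gives $\|\phi\|_{L^6}\lesssim\|\psi\|_{H^1}^2$, hence $\|\phi\psi\|_{L^{3/2}}\lesssim \|\psi\|_{H^1}^3$. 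The most delicate contribution is the power nonlinearity, for which Hölder yields
\begin{equation*}
\||\psi|^{2(\gamma-1)}\psi\|_{L^{3/2}}\le \|\psi\|_{L^{q_\gamma}}\|\psi\|_{L^6}^{2(\gamma-1)},\qquad q_\gamma:=\frac{3}{3-\gamma}.
\end{equation*}
For $\gamma\le 5/2$ one has $q_\gamma\le 6$ and the bound is uniform in $t$ via $H^1\hookrightarrow L^{q_\gamma}$; for $\gamma\in[5/2,3)$ the exponent $q_\gamma$ exceeds $6$, and here I appeal to the dispersive gain $\psi\in L^2_T W^{1/2,6}$ provided by Proposition \ref{th:apriori} together with the critical Sobolev embedding $W^{1/2,6}(\R^3)\hookrightarrow L^{q_\gamma}(\R^3)$ (valid for $q_\gamma<\infty$ since $\tfrac12\cdot 6=3$). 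Summing the four contributions yields \eqref{eq:est_V}. Then \eqref{eq:esfet} follows at once: by \eqref{eq:F_eta}, $\eta F$ is controlled via \eqref{eq:est_V} and the boundedness of $\eta$, while $\psi\Delta\eta$ and $\nabla\eta\cdot\nabla\psi$ are supported in a fixed compact set $K\subset\R^3$ on which $\psi\in L^\infty_TH^1(K)\hookrightarrow L^\infty_TL^{3/2}(K)$.

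For \eqref{fub} and \eqref{fubeta} I combine the pointwise three-dimensional dispersive estimate $\|e^{it\Delta/2}f\|_{L^3(\R^3)}\lesssim |t|^{-1/2}\|f\|_{L^{3/2}(\R^3)}$ with one-dimensional fractional integration in the time variable. Setting $h(t):=\int_{t_1}^t\|e^{i(t-\tau)\Delta/2}F(\tau)\|_{L^3}\,d\tau$, the dispersive bound gives
\begin{equation*}
h(t)\lesssim \int_{t_1}^t(t-\tau)^{-1/2}\|F(\tau)\|_{L^{3/2}}\,d\tau,
\end{equation*}
a truncated Riesz potential of order $1/2$ applied to $\tau\mapsto\|F(\tau)\|_{L^{3/2}}$. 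Since the one-dimensional Hardy--Littlewood--Sobolev inequality yields boundedness of $I_{1/2}:L^{4/3}(\R)\to L^4(\R)$, and since \eqref{eq:est_V} combined with Hölder in time produces $\|F\|_{L^{4/3}_T L^{3/2}}\lesssim T^{1/4}\|F\|_{L^2_T L^{3/2}}<\infty$, I deduce $h\in L^4_t(0,T)$ uniformly in $t_1\in[0,T]$. One final application of Hölder on the bounded interval $(0,T)$ upgrades this to the desired $L^2_t(0,T)$ bound, establishing \eqref{fub}; replacing $F$ by $F_\eta$ and invoking \eqref{eq:esfet} yields \eqref{fubeta}.

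The main technical obstacle is the pure-power contribution when $\gamma\in[5/2,3)$: in that regime the bare energy bound $\psi\in L^\infty_TH^1$ is insufficient to place $|\psi|^{2(\gamma-1)}\psi$ in $L^\infty_TL^{3/2}$, and the dispersive regularization $\psi\in L^2_TW^{1/2,6}$ supplied by Proposition \ref{th:apriori} is genuinely needed. All remaining steps amount to elementary Hölder and Sobolev inequalities together with a standard one-dimensional fractional-integration argument.
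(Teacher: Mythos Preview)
Your proof is correct and follows essentially the same route as the paper's: the paper likewise splits $F$ via H\"older and handles the pure-power term through the space $L^\infty_TH^1\cap L^2_TW^{1/2,6}$ furnished by Proposition~\ref{th:apriori} (the only cosmetic difference being that the paper bounds $\||\psi|^{2(\gamma-1)}\psi\|_{L^{3/2}}\le\|\psi\|_{L^{3\gamma-3/2}}^{2\gamma-1}$ and then interpolates, whereas you isolate a single factor in $L^{q_\gamma}$). Your treatment of \eqref{fub}--\eqref{fubeta} via the fixed-time $L^{3/2}\!\to\!L^3$ dispersive bound plus one-dimensional Hardy--Littlewood--Sobolev is exactly what the paper invokes when it refers to ``the fixed-time dispersive bound, fractional integration in the time variable and the Christ--Kiselev Lemma'' (the latter being unnecessary here since the norm is already inside the time integral).
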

In particular, estimate \eqref{eq:esfet} yields $F_\eta\in L^1_TH^{-1/2}$, so that standard arguments on the theory of linear semigroups (see e.g.~\cite[Section 1.6]{cazenave}) guarantee the validity of Duhamel's formula
\begin{equation}\label{duha_eta}
	(\eta\psi)(t_2)=e^{\frac{i}{2}(t_2-t_1)\Delta}(\eta\psi)(t_1)+\int_{t_1}^{t_2}e^{\frac{i}{2}(t_2-t)\Delta}(\psi\partial_t\eta-iF_{\eta})(t)dt,
\end{equation}
for every $t_1,t_2\in[0,T]$.
\begin{proof}
Estimate \eqref{eq:est_V} follows from
\begin{equation*}
	\begin{split}
		\|F\|_{L_T^2L^{3/2}}&\lesssim_T \|A\|_{L_T^{\infty}L^{6}}\|\nabla\psi\|_{L_T^{\infty}L^2}\\
		&+\big(\|A\|_{L_T^{\infty}L^6}^2+\|\psi\|^2_{L_T^{\infty}L^2}\big)\|\psi\|_{L_T^{\infty}L^{3}}+\|\psi\|^{2\gamma-1}_{L_T^{4\gamma-2}L^{3\gamma-3/2}}\\
		&\lesssim_T 1+\|\psi\|^{2\gamma-1}_{L_T^{\infty}H^1\cap L_T^2W^{1/2,6}}\lesssim_{T,\sigma}\, 1,
	\end{split}
\end{equation*}
where we used the definition \eqref{eq:def_VF} of $F$ and H\"older inequality in the first step, Sobolev embedding together with the interpolation inequality $\|\psi\|_{L_T^{4\gamma-2}L^{3\gamma-3/2}}\lesssim_T \|\psi\|_{L_T^{\infty}H^1\cap L_T^2W^{1/2,6}}$ in the second step, and the a priori bound \eqref{eq:apriori_uno} in the last step. Moreover, using \eqref{eq:est_V} and the fact that $\eta$ is smooth and compactly supported, we also get 
\begin{equation*}
	\|F_{\eta}\|_{L_T^2L^{3/2}}\lesssim_T \|F\|_{L_T^2L^{3/2}}+\|\psi\|_{L_T^{\infty}L^2}+\|\nabla\psi\|_{L_T^{\infty}L^2}\lesssim_{T,\sigma}\, 1,
\end{equation*}
which proves \eqref{eq:esfet}. In order to prove \eqref{fub}, we observe that
\begin{equation*}
\Big\|\int_{t_1}^{t}\|e^{i(t-\tau)\Delta}F(\tau)\|_{L^3}\,d\tau\Big\|_{L_t^2(0,T)}\lesssim_T \|F\|_{L_T^{2}L^{3/2}}\lesssim_{T,\sigma}\, 1,
\end{equation*}
where the first inequality can be proved as in the case of the inhomogeneous non-endpoint Strichartz estimates for the Schr\"odinger equation (by combining the fixed-time dispersive bound, fractional integration in the time variable and the Christ-Kiselev Lemma), and the second inequality is guaranteed by \eqref{eq:est_V}. Analogously, using \eqref{eq:esfet} we get 
\begin{equation*}
\Big\|\int_{t_1}^{t}\|e^{i(t-\tau)\Delta}F_{\eta}(\tau)\|_{L^3}\,d\tau\Big\|_{L_t^2(0,T)}\lesssim_T \|F_{\eta}\|_{L_T^{2}L^{3/2}}\lesssim_{T,\sigma}\, 1,
\end{equation*}
which proves \eqref{fubeta} and concludes the proof.
\end{proof}

Let us study separately the continuity and the momentum equation.

\subsection{Continuity equation} We have the following result.
\begin{proposition}\label{pr:continuity}
	Let us fix $\gamma\in(1,3)$, $\sigma>1$, $T>0$, and let $(\psi,A)$ be a weak $M^{1,1}$-solution to the system \eqref{eq:MS} on the space-time slab $[0,T]\times\R^3$, with initial data $(\psi_0,A_0,A_1)\in M^{1,\sigma}$. Then the integral identity
	\begin{equation}\label{eq:cont}
		\int_0^T\int_{\R^3}\rho\partial_t\eta+J\cdot\nabla\eta\,dxdt+\int_{\R^3}\rho_0(x)\eta(0,x)dx=0
	\end{equation}
	holds true for every $\eta\in\mathcal{C}_c^{\infty}([0,T)\times\R^3)$.
\end{proposition}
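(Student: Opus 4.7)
My approach is to differentiate in time the scalar quantity $I(t) := (\eta(t)\psi(t),\psi(t))_{L^2} = \int_{\R^3}\eta(t,x)|\psi(t,x)|^2\,dx$ and identify its distributional derivative with $\int[\rho\partial_t\eta + J\cdot\nabla\eta]\,dx$. Once this is established, the conclusion is immediate: since $\eta(T,\cdot)\equiv 0$ by the compact-support assumption, while $I(0)=\int\rho_0\eta(0)\,dx$ by the weak $H^1$-continuity of $\psi$ inherited from the $L_T^\infty H^1\cap W_T^{1,\infty}H^{-1}$ regularity, integrating $I'$ from $0$ to $T$ yields $-I(0)=\int_0^T I'(t)\,dt$, which is precisely \eqref{eq:cont}.

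Formally, $I'(t) = (\partial_t(\eta\psi),\psi) + (\eta\psi,\partial_t\psi)$. Plugging in the equation \eqref{id:feta} for $\eta\psi$ and the Schr\"odinger equation \eqref{eq:MS} for $\psi$, the kinetic contributions cancel by self-adjointness of $-\Delta$: both $(\Delta(\eta\psi),\psi)$ and $(\eta\psi,\Delta\psi)$ reduce to $-(\nabla(\eta\bar\psi),\nabla\psi)$ after integration by parts, so the combination coming from the prefactors $-\tfrac{i}{2}$ and $+\tfrac{i}{2}$ vanishes. The surviving nonlinear pieces consolidate into $i\int[\overline{F_\eta}\psi-\eta\bar\psi F]\,dx$; expanding using \eqref{eq:F_eta}, invoking the elementary identity $\IM(\bar\psi F) = \tfrac{1}{2}A\cdot\nabla\rho$ (immediate from the expression \eqref{eq:def_VF} of $F$ together with $\diver A=0$), and performing one further integration by parts, this collapses to $\int\nabla\eta\cdot\IM(\bar\psi\nabla\psi)\,dx - \int\nabla\eta\cdot A\rho\,dx$. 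The polar-factorization identity $\IM(\bar\psi\nabla\psi) = J + A\rho$ (consequence of Lemma \ref{le:polar} in the Coulomb gauge) finally yields $I'(t) = \int[\rho\partial_t\eta + J\cdot\nabla\eta]\,dx$, as desired.

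To promote the above to a rigorous distributional statement, I would exploit the Duhamel representation \eqref{duha_eta} for $\eta\psi$ together with Duhamel for $\psi$ itself, writing the increment $I(t_2)-I(t_1)$ as a double time integral and swapping integration order via Fubini. The $L_T^2L^{3/2}$ controls \eqref{eq:est_V}--\eqref{eq:esfet} on $F$ and $F_\eta$, together with the iterated-Duhamel bounds \eqref{fub}--\eqref{fubeta}---all byproducts of Proposition \ref{th:apriori}, which is exactly where the extra-regularity hypothesis $\sigma>1$ is used---guarantee absolute convergence of every double integral appearing in the expansion and legitimize the interchanges of integration order. The main obstacle I anticipate is precisely this Fubini step: neither $F$ nor $F_\eta$ lies in $L_T^1L^2$, so the standard mild-solution formalism is not directly applicable, and one must carefully pair the non-endpoint inhomogeneous estimates \eqref{fub}--\eqref{fubeta} with the $L_T^\infty H^1$ regularity of $\psi$ in order to control each cross term of the form $(e^{i(t-s)\Delta/2}F(s),\psi(t))_{L^2}$ (and its $F_\eta$ analogue) absolutely in $(s,t)$. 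All remaining ingredients---the kinetic cancellation, the $\diver A=0$ gauge simplification, and the identification of $\IM(\bar\psi\nabla\psi)$ with $J+A\rho$---are either purely algebraic or already contained in Lemma \ref{le:polar}.
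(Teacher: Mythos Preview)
Your proposal is correct and follows essentially the same approach as the paper: the paper carries out explicitly the Duhamel/Fubini manipulation you sketch, arriving at the identity
\[
\int_0^T\!\!\int_{\R^3}\rho\,\partial_t\eta\,dxdt=-\int_{\R^3}\eta(0)\rho_0\,dx+\int_0^T\langle\eta\psi,iF\rangle+\langle iF_\eta,\psi\rangle\,dt,
\]
and then performs the same algebraic simplification (using $\IM(\bar\psi F)=\tfrac12 A\cdot\nabla\rho$, $\diver A=0$, and $J=\IM(\bar\psi\nabla\psi)-\rho A$) that you describe. The only difference is organizational: you present the formal time-differentiation of $I(t)=(\eta\psi,\psi)$ first and defer the Duhamel justification, whereas the paper goes directly through the Duhamel representation and recovers your formal identity as the outcome.
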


\begin{proof}
By using the Duhamel formula \eqref{eq:duh}, we write
	\begin{equation}\label{eq:prel_cont}
		\begin{split}
			\int_0^T&\int_{\R^3}\rho\partial_t\eta dxdt=\int_0^T\<\psi\partial_t\eta,\psi\>dt=\int_0^T\<\psi\partial_t\eta,e^{\frac{i}{2}t\Delta}\psi_0\> dt\\
			&-\int_0^T\<\psi\partial_t\eta,\,i\!\int_0^te^{\frac{i}{2}(t-s)\Delta}F(s)ds\>dt.
		\end{split}
	\end{equation}
Estimates \eqref{fub} allows to apply Fubini Theorem, which combined with the unitarity of the propagator $e^{iT\Delta}$ yields
	\begin{equation}\label{matg}
		\begin{split}
			\int_0^T\int_{\R^3}\rho&\partial_t\eta dxdt=\Big\langle\int_0^Te^{\frac{i}{2}(T-t)\Delta}(\psi\partial_t\eta)(t)dt,e^{\frac{i}{2}T\Delta}\psi_0\Big\rangle\\
			&-\int_{\mathcal{D}}\Big\langle e^{\frac{i}{2}(s-t)\Delta}(\psi\partial_t\eta),iF(s)\Big\rangle dsdt:=\mbox{I}+\mbox{II},
		\end{split}
	\end{equation}
where $\mathcal{D}:=\{(s,t)\in[0,T]^2\,|\,0\leq s\leq t\leq T\}$. Let us evaluate terms I and II separately.

	\textbf{Term I.} Since $\operatorname{supp}(\eta)\subseteq [0,T)\times\R^3$, the Duhamel formula \eqref{duha_eta} with $t_1=0$, $t_2=T$ yields
\begin{equation}\label{van_att}
\int_{0}^{T}e^{\frac{i}{2}(T-t)\Delta}(\psi\partial_t\eta)(t)dt = 
-e^{\frac{i}{2}T\Delta}(\eta_0\psi_0)+
i\int_{0}^{T}e^{\frac{i}{2}(T-t)\Delta}F_{\eta}(t)dt.
\end{equation}
By plugging \eqref{van_att} in the definition of $I$ and by exploiting the unitarity of $e^{it\Delta}$, we obtain
	\begin{equation}\label{eq:I}
		\begin{split}
			I&=\Big\langle -e^{\frac{i}{2}T\Delta}\big(\eta_0\psi_0\big)+i\int_0^Te^{\frac{i}{2}(T-t)\Delta}F_{\eta}(t)dt,e^{\frac{i}{2}T\Delta}\psi_0\Big\rangle\\
			&=-\int_{\R^3}\eta(0,x)\rho_0(x)+\int_0^T\big\langle iF_{\eta},e^{\frac{i}{2}t\Delta}\psi_0\big\rangle dt,
		\end{split}
	\end{equation}
	where in the last step we use Fubini Theorem to exchange the order of integration, owing to estimate \eqref{fubeta} and the condition $\operatorname{supp}(\eta)\subseteq [0,T)\times\R^3$.
	
	\textbf{Term II.} We start by writing 
	\begin{equation}\label{eq:102}
		II=\int_0^T\big\langle\int_s^Te^{\frac{i}{2}(s-t)\Delta}(\psi\d_t\eta)(t)\,dt,-iF(s)\big\rangle\,ds.
	\end{equation}
	Applying identity \eqref{duha_eta} with $t_1=T$, $t_2=s$, since $\eta(T, \cdot)\equiv0$, we obtain
	\begin{equation*}
		\int_s^Te^{\frac{i}{2}(s-t)\Delta}(\psi\d_t\eta)(t)\,dt=-(\eta\psi)(s)+i\int_s^Te^{\frac{i}{2}(s-t)\Delta}F_\eta(t)\,dt.
	\end{equation*}
	By plugging the previous identity into \eqref{eq:102} we then deduce
	\begin{multline}\label{eq:I_2}
		II=\int_0^T\langle-(\eta\psi)(s)
		+i\int_s^Te^{\frac{i}{2}(s-t)\Delta}F_\eta(t)\,dt,-iF(s)\rangle\,ds\\
		=\int_0^T\langle \eta\psi, iF\rangle+\langle iF_\eta(t),-i\int_0^te^{\frac{i}{2}(t-s)\Delta}F(s)\,ds\rangle\,dt,
	\end{multline}
	where in the last step we used Fubini Theorem to exchange the order of integration, owing to estimates \eqref{eq:esfet} and \eqref{fub}. 
	
	\textbf{I+II.} By summing the two contributions \eqref{eq:I} and \eqref{eq:I_2} we obtain
	\begin{equation}\label{eq:con_par}
		\int_0^T\int_{\R^3}\rho\d_t\eta\,dxdt=-\int_{\R^3}\eta(0, \cdot)\rho_0\,dx+
		\int_0^T\langle\eta\psi, iF\rangle+\langle iF_\eta, \psi\rangle\,dt.
	\end{equation}
By using the definition of $F$ and $F_\eta$ given in \eqref{eq:def_VF} and \eqref{eq:F_eta}, respectively, we then have
	\begin{equation*}
		\begin{split}
			\langle\eta\psi, iF\rangle+\langle iF_\eta, \psi\rangle
			=&\langle\psi, 2i\eta F+i\nabla\eta\cdot\nabla\psi+\frac{i}{2}\psi\Delta\eta\rangle\\
			=&\langle\psi,-2\eta A\cdot\nabla\psi+i\nabla\eta\cdot\nabla\psi\rangle\\
			=&\int_{\R^3}-\eta A\cdot\nabla\rho+\nabla\eta\cdot\RE(\bar\psi(i\nabla)\psi)\,dx\\
			=&-\int_{\R^3}\nabla\eta\cdot J\,dx.
		\end{split}
	\end{equation*}
	By plugging the above identity into \eqref{eq:con_par} we then obtain the weak formulation \eqref{eq:cont} of the continuity equation.
\end{proof}


In what follows we provide a slight refinement of Proposition \ref{pr:continuity}, allowing for rougher test functions in \eqref{eq:cont}. This result will be used below when we prove the validity of the weak formulation for the momentum equation.

\begin{corollary}\label{pr:continuity_cor}
	Let us fix $\gamma\in(1,3)$, $T>0$, $\sigma>1$, and let $(\psi,A)$ be a weak $M^{1,1}$-solution to the system \eqref{eq:MS} on the space-time slab $[0,T]\times\R^3$, with initial data $(\psi_0,A_0,A_1)\in M^{1,\sigma}$. Then the integral identity
	\begin{equation}\label{eq:cont_cor}
		\int_0^T\int_{\R^3}\rho\partial_t\eta+J\cdot \nabla\eta\,dxdt+\int_{\R^3}\rho_0(x)\eta(0,x)dx=0
	\end{equation}
	holds true for every test function $\eta\in L_T^{\infty}H^{\sigma}\cap W^{1,\infty}_TL^2$, such that $\operatorname{supp}(\eta)$ is compact in $[0,T)\times\R^3$.
\end{corollary}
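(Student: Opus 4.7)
The plan is a density argument: I would approximate $\eta$ by smooth, compactly supported test functions $\eta_\epsilon$, apply Proposition \ref{pr:continuity} to each of them, and pass to the limit $\epsilon\to 0$ in the resulting identity.

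First I would construct the approximating sequence. Since $\operatorname{supp}(\eta)\subset [0,T_0]\times K$ for some $T_0<T$ and $K\subset\R^3$ compact, after extending $\eta$ by even reflection across $t=0$ and by zero outside a bounded region, standard space-time mollification combined with a suitable cutoff in time yields $\eta_\epsilon\in C^\infty_c([0,T)\times\R^3)$ with $\eta_\epsilon\to\eta$ in $L^p_TH^\sigma$ and $\partial_t\eta_\epsilon\to\partial_t\eta$ in $L^p_TL^2$ for every $p\in[1,\infty)$. Moreover, since $\partial_t\eta\in L^\infty_TL^2$ implies $\eta\in C([0,T],L^2(\R^3))$, one also obtains $\eta_\epsilon(0,\cdot)\to\eta(0,\cdot)$ in $L^2(\R^3)$.

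Next I would apply Proposition \ref{pr:continuity} to each $\eta_\epsilon$ and pass to the limit in each of the three terms of \eqref{eq:cont}. The first term converges by H\"older, using $\rho=|\psi|^2\in L^\infty_T(L^1\cap L^3)$, a consequence of $\sqrt{\rho}\in L^\infty_TH^1$. The initial-value term is analogous, pairing $\rho_0\in L^1\cap L^3$ against $\eta_\epsilon(0,\cdot)\to\eta(0,\cdot)$ in $L^2$.

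The hard part will be the convergence of $\int_0^T\!\int_{\R^3}J\cdot\nabla\eta_\epsilon\,dx\,dt$. The bare energy control only gives $J=\sqrt{\rho}\Lambda\in L^\infty_TL^{3/2}$, and pairing this against $\nabla\eta\in L^\infty_TH^{\sigma-1}$ would demand $H^{\sigma-1}\hookrightarrow L^3$, that is $\sigma\geq 3/2$, which is not guaranteed by the hypothesis $\sigma>1$. To bridge this gap I would invoke the smoothing estimates from Proposition \ref{th:apriori}. Writing $J=\IM(\bar\psi\nabla\psi)-|\psi|^2A$, the bound $\psi\in L^2_TW^{1/2,6}$, the Sobolev embedding $W^{1/2,6}(\R^3)\hookrightarrow L^r(\R^3)$ for every finite $r$, and $\nabla\psi\in L^\infty_TL^2$ yield via H\"older that $\IM(\bar\psi\nabla\psi)\in L^2_TL^{2-\delta}$ for every $\delta>0$. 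Likewise, $A\in L^2_TL^\infty$ and $|\psi|^2\in L^\infty_TL^3$ give $|\psi|^2A\in L^2_TL^3$, so altogether one obtains the crucial improvement $J\in L^2_TL^{2-\delta}_{\mathrm{loc}}$ for every $\delta>0$. Since $\sigma>1$ gives $6/(5-2\sigma)>2$, one can fix $\delta>0$ small enough that $H^{\sigma-1}\hookrightarrow L^{(2-\delta)/(1-\delta)}$, and then the convergence $\nabla\eta_\epsilon\to\nabla\eta$ in $L^2_TH^{\sigma-1}$ combined with space-time H\"older closes the argument.
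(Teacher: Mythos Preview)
Your proof is correct and follows essentially the same approach as the paper: a density argument in which the only nontrivial term is $J\cdot\nabla\eta$, handled via the dispersive bound $\psi\in L^2_TW^{1/2,6}$ from Proposition~\ref{th:apriori}. The paper's version differs only in bookkeeping, bounding $\|J\cdot\nabla\eta\|_{L^1_TL^1}$ directly by $\|\psi\|_{L^2_TL^{3/(\sigma-1)}}\|\nabla\psi\|_{L^\infty_TL^2}\|\nabla\eta\|_{L^\infty_TL^{6/(5-2\sigma)}}$ and claiming approximation of $\eta$ in the $L^\infty_t$-based norm; your use of $L^p_t$ convergence for $p<\infty$ is arguably cleaner on that last point.
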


\begin{proof}
	Let us denote by $C(\eta)$ the l.h.s.~of \eqref{eq:cont_cor}. We are going to show  that the linear map $\eta\mapsto C(\eta)$ is continuous from $L_T^{\infty}H^{\sigma}\cap W^{1,\infty}_TL^2$ to $\R$. We start with the estimates
	\begin{gather}
		\label{mok_uno}\|\rho\partial_t\eta\|_{L_T^1L^1}\lesssim_T \|\rho\|_{L_T^{\infty}L^2} \|\partial_t\eta\|_{L_T^{\infty}L^2}\lesssim \|\eta\|_{W_T^{1,\infty}L^2},\\\label{mok_due}
		\|\rho_0\eta_0\|_{L^1}\lesssim\|\psi_0\|_{L^4}^2\|\eta_0\|_{L^2}\lesssim \|\eta\|_{W_T^{1,\infty}L^2}.
	\end{gather}
	For the term involving $J\cdot \nabla\eta$, we can use that $\psi\in L_T^2W^{\frac12,6}$ and Sobolev embedding to obtain
	\begin{equation}\label{mok_tre}
		\begin{split}
			\|J\cdot \nabla\eta\|_{L_T^1L^1}&\lesssim_T \|\psi\|_{L_T^2L^{3/(\sigma-1)}}\|\nabla\psi\|_{L_T^{\infty}L^2}\|\nabla\eta\|_{L_T^{\infty}L^{6/(5-2\sigma)}}\\
			&\lesssim\|\psi\|_{L_T^2W^{1/2,6}}\|\nabla\psi\|_{L_T^{\infty}L^2}\|\eta\|_{L_T^{\infty}H^{\sigma}}\lesssim_T \|\eta\|_{L_{T}^{\infty}H^{\sigma}},
		\end{split}
	\end{equation}
	where we assumed for simplicity $\sigma<2$. Combining estimates \eqref{mok_uno}, \eqref{mok_due} and \eqref{mok_tre} we then obtain the desired continuity. Observe moreover that, since $\operatorname{supp}(\eta)$ is compact in $[0,T)\times\R^3$, we can find a sequence of function $\eta^{(n)}\in \mathcal{C}^{\infty}_c([0,T)\times\R^3)$ such that $\eta^{(n)}\rightarrow\eta$ in $L_T^{\infty}H^{\sigma}\cap W^{1,\infty}_TL^2$. By Proposition \ref{pr:continuity} we have $C(\eta^{(n)})=0$, whence $C(\eta)=\lim C(\eta^{(n)})=0$, which proves the thesis.
\end{proof}

\subsection{Momentum equation}
We are going to prove the following result.
\begin{proposition}\label{pr:momentum}
	Let us fix $\gamma\in(1,3)$, $T>0$, $\sigma>1$, and let $(\psi,A)$ be a weak $M^{1,1}$-solution to the system \eqref{eq:MS} on the space-time slab $[0,T]\times\R^3$, with initial data $(\psi_0,A_0,A_1)\in M^{1,\sigma}$. Then the integral identity 
	\begin{equation}\label{eq:mome}
		\begin{split}
			\int_0^T\int_{\R^3}J&\cdot\partial_t\zeta+\nabla\zeta:\big(\nabla\sqrt{\rho}\otimes\nabla\sqrt{\rho}+\Lambda\otimes\Lambda\big)+(\rho E+J\wedge B)\cdot\zeta\\
			&-\frac{1}{4}\rho\Delta\diver\zeta+P(\rho)\diver\zeta dxdt+\int_{\R^3}J_0(x)\cdot\zeta(0,x)dx=0;
		\end{split}
	\end{equation}
	holds true for every $\zeta\in\mathcal{C}^{\infty}_c([0,T)\times\R^3;\R^3)$.
\end{proposition}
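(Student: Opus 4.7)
The plan is to extend the Duhamel--Fubini strategy used in the proof of Proposition \ref{pr:continuity}, now with the vector-valued test function $\zeta$ and the additional magnetic and Lorentz contributions. First, using $\nabla_A=-i\nabla-A$, I separate the canonical and gauge pieces of the current,
\[\int_0^T\!\!\int_{\R^3}J\cdot\partial_t\zeta\,dxdt=\IM\int_0^T\!\!\int_{\R^3}\bar\psi\,\partial_t\zeta\cdot\nabla\psi\,dxdt-\int_0^T\!\!\int_{\R^3}\rho\,A\cdot\partial_t\zeta\,dxdt,\]
and handle the second integral by applying Corollary \ref{pr:continuity_cor} to the rough test function $\eta:=A\cdot\zeta$. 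Thanks to $\sigma>1$ and Proposition \ref{th:apriori}, $A\cdot\zeta$ indeed belongs to $L_T^{\infty}H^{\sigma}\cap W_T^{1,\infty}L^2$ with compact support in $[0,T)\times\R^3$, so this extended continuity identity produces the boundary term $\int\rho_0A_0\cdot\zeta(0)\,dx$, a hydrodynamic correction $\int J\cdot\nabla(A\cdot\zeta)\,dxdt$, and the term $\int\rho\,\partial_tA\cdot\zeta\,dxdt$, which via $\partial_tA=-E-\nabla\phi$ in Coulomb gauge carries both the electric part of the Lorentz force and a Hartree piece $-\int\rho\nabla\phi\cdot\zeta$.

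For the canonical piece $\IM\int\bar\psi\,\partial_t\zeta\cdot\nabla\psi$, I mirror the argument of Proposition \ref{pr:continuity}: substitute the Duhamel formula \eqref{eq:duh} in one factor of $\psi$, and use the perturbed Schr\"odinger equation satisfied by $\zeta\cdot\nabla\psi$, which plays the role that \eqref{id:feta} played for $\eta\psi$ in the continuity proof. Writing
\[(i\partial_t+\tfrac12\Delta)(\zeta\cdot\nabla\psi)=i\,\partial_t\zeta\cdot\nabla\psi+\zeta\cdot\nabla F+\tfrac12[\Delta,\zeta\cdot\nabla]\psi,\]
and applying its associated Duhamel identity, then Fubini in time, transfers $\partial_t\zeta$ off the $\zeta$-modified wave function and produces the bulk terms in \eqref{eq:mome}. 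The Fubini exchanges are legitimized at each step by Proposition \ref{th:apriori} and estimates of type \eqref{eq:est_V}--\eqref{fubeta}, since $\sigma>1$ in particular puts $A\in L_T^2L^{\infty}$. After the manipulation, the commutator contribution combined with the kinetic $iA\cdot\nabla\psi$ part of $F$ assembles, via integrations by parts, into the canonical stress tensor $\int\RE(\overline{\nabla\psi}\otimes\nabla\psi):\nabla\zeta$ and the quantum pressure $-\tfrac14\int\rho\,\Delta\diver\zeta$; the power nonlinearity $|\psi|^{2(\gamma-1)}\psi$ yields $\int P(\rho)\diver\zeta$ through $\rho\nabla\rho^{\gamma-1}=\nabla P(\rho)$; and the Hartree term $\phi\psi$ contributes $\int\rho\nabla\phi\cdot\zeta$, which exactly cancels the Hartree piece from the first paragraph to leave the electric force $\int\rho E\cdot\zeta$.

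The final and most delicate step is the identification of the magnetic Lorentz force $\int(J\wedge B)\cdot\zeta$. The residual $A$-dependent contributions arising from $iA\cdot\nabla\psi+\tfrac12|A|^2\psi$ inside $F$, the hydrodynamic correction $\int J\cdot\nabla(A\cdot\zeta)$ from the first paragraph, and the cross terms produced when rewriting the canonical stress through the polar factorization identity \eqref{eq:id-pol} as $\Lambda\otimes\Lambda+\nabla\sqrt{\rho}\otimes\nabla\sqrt{\rho}$, all need to be recombined. Using the vector identity \eqref{eq:tril} with $V_1=\zeta$, $V_2=A$, $V_3=J$, together with the Coulomb constraint $\diver A=0$, these collapse into $\int(J\wedge B)\cdot\zeta\,dxdt$, well defined thanks to Proposition \ref{pr:wdlf}. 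Finally, the initial-time terms regroup as $\int J_0\cdot\zeta(0)\,dx$ via the Madelung identity \eqref{eq:ssigma_id}, yielding \eqref{eq:mome}. The main obstacle is precisely this bookkeeping: tracking every $A$-bilinear contribution through the Duhamel computation and verifying that the Coulomb gauge produces the expected cancellations, a step essentially enabled by the extra regularity $\sigma>1$.
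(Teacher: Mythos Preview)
Your strategy matches the paper's architecture closely: the reduction via Corollary~\ref{pr:continuity_cor} with $\eta=A\cdot\zeta$, the Duhamel--Fubini manipulation of the canonical piece, and the final bookkeeping through \eqref{eq:id-pol} and \eqref{eq:tril} are exactly the content of the paper's Lemmas~\ref{le:si} and~\ref{prop:3}. The one structural difference is your choice of auxiliary object: you propose to run Duhamel on $\zeta\cdot\nabla\psi$, whose source contains $\zeta\cdot\nabla F$, whereas the paper runs it on $\zeta^j\psi$ (so the source is $F_{\zeta^j}$, which involves $F$ but not $\nabla F$) and only picks up the extra derivative when pairing against $-i\partial_j\psi$. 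These are formally dual, but the paper's organization keeps the auxiliary function at $H^1$ rather than $L^2$ regularity, which matters below.

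The genuine gap is your claim that ``the Fubini exchanges are legitimized at each step by Proposition~\ref{th:apriori} and estimates of type \eqref{eq:est_V}--\eqref{fubeta}.'' Those bounds only place $F\in L_T^2L^{3/2}$, and that is \emph{not} enough for the critical pairing, which in the paper's version is $\langle F_{\zeta^j},\,\partial_j\!\int_0^t e^{\frac{i}{2}(t-s)\Delta}F(s)\,ds\rangle$. To make this pairing and the associated Fubini step rigorous, the paper proves the sharper estimate $F\in L_T^2(L^2+W^{1/2,6/5})$ (this is \eqref{refinedF}), uses the representation \eqref{impr_i}--\eqref{impr_ii} to see that the Duhamel integrals lie in $L_T^2(H^1\cap W^{1/2,6})$, and then inserts a Yosida regularization $\mathcal{Y}_\eps=(1-\eps\Delta)^{-1}$ to justify the exchange of integration before passing to the limit (see \eqref{est_mtre}). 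In your organization the source already carries the derivative, $\zeta\cdot\nabla F\in L_T^2(H^{-1}+W^{-1/2,6/5})$, so the same issue appears one step earlier and is, if anything, slightly more delicate; in particular the pairing of $F$ against the Duhamel integral of $\zeta\cdot\nabla F$ cannot be controlled by the $L^{3/2}$-level estimates alone. You need to supply the analogue of \eqref{refinedF} and a regularization device; without them the Fubini step in Term~II is not justified.
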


We will split the proof of Proposition \ref{pr:momentum} in a series of lemmas. Let us fix a test function $\zeta\in\mathcal{C}^{\infty}_c([0,T)\times\R^3;\R^3)$, and for any index $j=1,2,3$ we define $F_{\zeta^j}$ as in \eqref{eq:F_eta}. We start by rewriting the weak formulation of the momentum equation in terms of $(\psi, A)$, by also making some simplifications.

\begin{lemma}\label{le:si}
	Let us fix $\gamma\in(1,3)$, $T>0$, $\sigma>1$, and let $(\psi,A)$ be a weak $M^{1,1}$-solution to the system \eqref{eq:MS} on the space-time slab $[0,T]\times\R^3$, with initial data $(\psi_0,A_0,A_1)\in M^{1,\sigma}$. Then the following identity holds:
	\begin{equation}\label{eq:w_mom_hyb}
		\begin{split}
			\int_0^T\int_{\R^3}J&\partial_t\zeta+\nabla\zeta:\big(\nabla\sqrt{\rho}\otimes\nabla\sqrt{\rho}+\Lambda\otimes\Lambda\big)+(\rho E+J\wedge B)\cdot\zeta\\
			&-\frac{1}{4}\rho\Delta\diver\zeta+P(\rho)\diver\zeta dxdt+\int_{\R^3}J_0(x)\cdot\zeta(0,x)dx\\
			=\int_0^T\int_{\R^3}&\IM(\bar\psi\nabla\psi)\cdot\d_t\zeta+\nabla\zeta:\RE(\nabla\bar\psi\otimes\nabla\psi)\\
			&+\left(f'(\rho)+\phi+\frac{|A|^2}{2}\right)\diver(\rho\zeta)-\IM(\bar\psi\nabla\psi)\cdot A\diver\zeta\\
			&-\zeta\cdot\left(A\wedge\IM(\nabla\bar\psi\wedge\nabla\psi)\right)
			-\frac14\rho\Delta\diver\zeta\,dxdt\\
			&+\int_{\R^3}\IM(\bar\psi_0\nabla\psi_0)\cdot\zeta(0, \cdot)\,dx.
		\end{split}
	\end{equation}
\end{lemma}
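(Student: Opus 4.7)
My strategy is to rewrite every hydrodynamic quantity on the LHS of \eqref{eq:w_mom_hyb} in terms of $(\psi,A)$ using the polar-factorization formulas in Lemma \ref{le:polar} together with the potential relations \eqref{eq:em_f}, and then to collect terms via integration by parts to recover the RHS. The two pivotal tools are: the pointwise identity $\Lambda\otimes\Lambda+\nabla\sqrt{\rho}\otimes\nabla\sqrt{\rho}=\RE(\overline{\nabla_A\psi}\otimes\nabla_A\psi)$ from Lemma \ref{le:polar}(ii), and the continuity equation in the extended form provided by Corollary \ref{pr:continuity_cor}, tested against $\eta:=A\cdot\zeta$. This $\eta$ is admissible since $A\in L^{\infty}_TH^{\sigma}\cap W^{1,\infty}_TL^2$ for $\sigma>1$ (by the a priori bound \eqref{eq:apriori_uno}), while $\zeta$ is smooth with support compact in $[0,T)\times\R^3$.

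First I would decompose $J=\IM(\bar\psi\nabla\psi)-\rho A$ (the Madelung formula in the Coulomb gauge) and substitute it into $\int J\cdot\partial_t\zeta$ and $\int J_0\cdot\zeta(0)$; this produces the desired $\IM(\bar\psi\nabla\psi)\cdot\partial_t\zeta$ and $\IM(\bar\psi_0\nabla\psi_0)\cdot\zeta(0)$ contributions on the RHS, modulo a remainder $-\int\rho A\cdot\partial_t\zeta-\int\rho_0A_0\cdot\zeta(0)$. Writing $E=-\nabla\phi-\partial_tA$ in $\int(\rho E)\cdot\zeta$, the $\partial_tA$-pieces merge with this remainder into $-\int\rho\,\partial_t(A\cdot\zeta)-\int\rho_0A_0\cdot\zeta(0)$, which by Corollary \ref{pr:continuity_cor} equals $\int J\cdot\nabla(A\cdot\zeta)$; meanwhile $-\int\rho\nabla\phi\cdot\zeta$ integrates by parts to $\int\phi\diver(\rho\zeta)$, producing one summand of the combined factor $(f'(\rho)+\phi+|A|^2/2)\diver(\rho\zeta)$.

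Next I would treat the magnetic Lorentz term $\int(J\wedge B)\cdot\zeta$ by splitting $J\wedge B=\IM(\bar\psi\nabla\psi)\wedge(\nabla\wedge A)-\rho A\wedge(\nabla\wedge A)$ and applying the vector identity $V\wedge(\nabla\wedge W)=\nabla(V\cdot W)-(V\cdot\nabla)W-(W\cdot\nabla)V-W\wedge(\nabla\wedge V)$. With $V=\IM(\bar\psi\nabla\psi)$, $W=A$, after integrating the pure-gradient piece by parts, this produces the contribution $-\IM(\bar\psi\nabla\psi)\cdot A\diver\zeta$ and, using the distributional identity $\nabla\wedge\IM(\bar\psi\nabla\psi)=\IM(\nabla\bar\psi\wedge\nabla\psi)$ (immediate from the antisymmetry of $\epsilon_{ijk}$), the curl term $-\zeta\cdot A\wedge\IM(\nabla\bar\psi\wedge\nabla\psi)$. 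Taking $V=W=A$ gives $A\wedge(\nabla\wedge A)=\nabla(|A|^2/2)-(A\cdot\nabla)A$, so $-\int\rho A\wedge(\nabla\wedge A)\cdot\zeta$ yields the $\int(|A|^2/2)\diver(\rho\zeta)$ summand. The pressure term is handled by the thermodynamic identity $\nabla P(\rho)=\rho\nabla f'(\rho)$ (coming from $\rho f'(\rho)=f(\rho)+P(\rho)$), which gives $\int P(\rho)\diver\zeta=\int f'(\rho)\diver(\rho\zeta)$.

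It remains to compare the convective tensor in hydrodynamic and wavefunction variables: expanding $\nabla_A\psi=-i\nabla\psi-A\psi$ in Lemma \ref{le:polar}(ii) yields, in coordinates, $\Lambda_j\Lambda_k+\partial_j\sqrt\rho\,\partial_k\sqrt\rho=\RE(\partial_j\bar\psi\,\partial_k\psi)-A_j\IM(\bar\psi\partial_k\psi)-A_k\IM(\bar\psi\partial_j\psi)+A_jA_k\rho$, so contracting with $\partial_j\zeta_k$ produces the target $\nabla\zeta:\RE(\nabla\bar\psi\otimes\nabla\psi)$ plus cross-terms. The main algebraic check of the proof is that these cross-terms cancel exactly with the leftover contributions from $\int J\cdot\nabla(A\cdot\zeta)$ (produced in step two) and from the $-(V\cdot\nabla)A-(A\cdot\nabla)V$ pieces of the vector identity in step three; the cancellation is enabled by integrating the $(A\cdot\nabla)V$ terms by parts using the Coulomb gauge $\diver A=0$. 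The quantum-pressure contribution $-\tfrac14\rho\Delta\diver\zeta$ is identical on both sides. All integrations by parts are justified at the available regularity $\psi\in L^{\infty}_TH^1$, $A\in L^{\infty}_TH^\sigma$ with $\sigma>1$, together with $\zeta\in\mathcal C^\infty_c$; the main obstacle is not analytic but rather the careful bookkeeping required to verify that the many cross-terms assemble exactly as prescribed by \eqref{eq:w_mom_hyb}.
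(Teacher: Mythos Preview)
Your approach is correct and follows essentially the same path as the paper: both proofs rely on the polar-factorization identity from Lemma~\ref{le:polar}(ii), the thermodynamic relation $\nabla P(\rho)=\rho\nabla f'(\rho)$, the substitution $E=-\nabla\phi-\partial_tA$, and crucially the use of Corollary~\ref{pr:continuity_cor} with test function $\eta=A\cdot\zeta$ to handle the time-derivative cross-terms. The only organizational difference is that the paper keeps $J$ intact longer and uses the trilinear identity \eqref{eq:tril}, namely $(V_1\wedge(\nabla\wedge V_2))\cdot V_3=V_1\cdot((V_3\cdot\nabla)V_2)-V_3\cdot((V_1\cdot\nabla)V_2)$, repeatedly to reorganize the Lorentz and convective terms, whereas you substitute $J=\IM(\bar\psi\nabla\psi)-\rho A$ at the outset and invoke the gradient-product identity $V\wedge(\nabla\wedge W)=\nabla(V\cdot W)-(V\cdot\nabla)W-(W\cdot\nabla)V-W\wedge(\nabla\wedge V)$; these are algebraically equivalent routes through the same cancellation.
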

\begin{proof}
	The proof is a straightforward consequence of the definitions \eqref{eq:mad}, \eqref{eq:em_f} and of the polar factorization technique for the wave function $\psi$ to define the hydrodynamic state $(\sqrt{\rho}, \Lambda)$ as in Lemma \ref{le:polar}.
	However we provide it for the sake of clarity in the exposition.
	
	Using the identities \eqref{eq:em_f} and \eqref{eq:id-pol}, the left hand side of \eqref{eq:w_mom_hyb} becomes
	\begin{equation}\label{eq:hyb1}\begin{aligned}
			\int_0^T\int_{\R^3}&J\cdot\d_t\zeta
			+\nabla\zeta:\RE\left(\overline{(-i\nabla-A)\psi}\otimes(-i\nabla-A)\psi\right)
			+P(\rho)\diver\zeta\\
			&+\rho(-\nabla\phi-\d_tA)\cdot\zeta
			+(J\wedge(\nabla\wedge A))\cdot\zeta
			-\frac14\rho\Delta\diver\zeta\,dxdt\\
			+\int_{\R^3}&J_0\cdot\zeta(0, \cdot)\,dx.
	\end{aligned}\end{equation}
 Let us set $f(\rho):=\gamma^{-1}\rho^{\gamma}$, and notice that $\nabla P(\rho)=\rho\nabla f'(\rho)$ in the sense of distributions. Moreover we can expand the bilinear term depending on the covariant derivative of $\psi$ as
	\begin{multline*}
		\nabla\zeta:\RE\left(\overline{(-i\nabla-A)\psi}\otimes(-i\nabla-A)\psi\right)\\
		=\d_j\zeta^k\RE(\d_j\bar\psi\d_k\psi)-\d_j\zeta^k\left(A^kJ^j+A^jJ^k+\rho A^jA^k\right)\\
		=\nabla\zeta:\RE(\nabla\bar\psi\otimes\nabla\psi)-J\cdot((A\cdot\nabla)\zeta)-A((J\cdot\nabla)\zeta)
		+\rho\nabla\zeta:(A\otimes A).
	\end{multline*}
	Furthermore, by using formula \eqref{eq:tril} we can express
	\begin{equation*}
		(J\wedge(\nabla\wedge A))\cdot\zeta=J\cdot((\zeta\cdot\nabla)A)-\zeta\cdot((J\cdot\nabla)A).
	\end{equation*}
	By using the previous identities, the expression \eqref{eq:hyb1} then becomes
	\begin{equation}\begin{aligned}\label{eq:hyb2}
			\int_0^T\int_{\R^3}&J\cdot\d_t\zeta+\nabla\zeta:\RE(\nabla\bar\psi\otimes\nabla\psi)-\rho\nabla\zeta:(A\otimes A)
			-J\cdot((A\cdot\nabla)\zeta)\\
			&-A\cdot((J\cdot\nabla)\zeta)
			+J\cdot((\zeta\cdot\nabla)A)-\zeta\cdot((J\cdot\nabla)A)\\
			&+(f'(\rho)+\phi)\diver(\rho\zeta)-\rho\zeta\cdot\d_tA
			-\frac14\rho\Delta\diver\zeta\,dxdt\\
			+\int_{\R^3}&J_0\cdot\zeta(0, \cdot)\,dx.
	\end{aligned}\end{equation}
	Let us notice that in the integral above we have the following term
	\begin{equation}\label{firte}
		-\int_0^T\int_{\R^3}A\cdot((J\cdot\nabla)\zeta)+\zeta\cdot((J\cdot\nabla)A)\,dxdt=-\int_0^T\int_{\R^3}(J\cdot\nabla)(A\cdot\zeta)\,dxdt.
	\end{equation}
	Let us also observe that, in view of the a priori bound \eqref{eq:apriori_uno}, we have $A\in L_T^{\infty}H^{\widetilde{\sigma}}$ for some $\widetilde{\sigma}\in(1,\frac76)$. By applying Corollary \ref{pr:continuity_cor} with $\eta=A\cdot\zeta\in L_T^{\infty}H^{\tilde{\sigma}}\cap W_T^{1,\infty}L^2$ we then obtain
	\begin{equation}\label{eq:conAz}
		-\int_0^T\int_{\R^3}(J\cdot\nabla)(A\cdot\zeta)\,dxdt=\int_0^T\int_{\R^3}\rho\partial_t(A\cdot\zeta)+\int_{\R^3}\rho_0(x)A_0\cdot\zeta(0,x)dx.
	\end{equation}
	In \eqref{eq:hyb2} we can also isolate the following terms
	\begin{equation}\label{alterm}
		\begin{split}
			-\int_0^T\int_{\R^3}&J\cdot((A\cdot\nabla)\zeta)-J\cdot((\zeta\cdot\nabla)A)\,dxdt\\
			&=\int_0^T\int_{\R^3}\zeta\cdot((A\cdot\nabla)J)-A\cdot((\zeta\cdot\nabla)J)-A\cdot J\diver\zeta\,dxdt\\
			&=-\int_0^T\int_{\R^3}(A\wedge(\nabla\wedge J))\cdot\zeta+A\cdot J\diver\zeta\,dxdt,
		\end{split}
	\end{equation}
	where in the last equality we used again \eqref{eq:tril}. Using \eqref{firte}, \eqref{eq:conAz} and \eqref{alterm}, and the identity $J=\IM(\bar\psi\nabla\psi)-\rho A$, the expression \eqref{eq:hyb2} can be written as
	\begin{equation}\label{eq:hyb3}\begin{aligned}
			\int_0^T\int_{\R^3}&\IM(\bar\psi\nabla\psi)\cdot\d_t\zeta+\nabla\zeta:\RE(\nabla\bar\psi\otimes\nabla\psi)
			-\rho\nabla\zeta:(A\otimes A)\\
			&-(A\wedge(\nabla\wedge J))\cdot\zeta-A\cdot J\diver\zeta
			+(f'(\rho)+\phi)\diver(\rho\zeta)\\
			&-\frac14\rho\Delta\diver\zeta\,dxdt+\int_{\R^3}\IM(\bar\psi_0\nabla\psi_0)\cdot\zeta(0, \cdot)\,dx.
	\end{aligned}\end{equation}
	Let us now consider the two remaining terms involving $J$. We have 
	\begin{gather}
		\label{eq:ga1}\nabla\wedge J=\nabla\wedge\IM(\bar\psi\nabla\psi)-\nabla\wedge(\rho A)
		=\IM(\nabla\bar\psi\wedge\nabla\psi)-\nabla\wedge(\rho A)\\\label{eq:ga2}
		-A\cdot J\diver\zeta=\rho|A|^2\diver\zeta-A\cdot\IM(\bar\psi\nabla\psi)\diver\zeta
	\end{gather}
	Moreover, by integrating by parts the term with $\rho\nabla\zeta:(A\otimes A)$, we obtain
	\begin{equation*}\begin{aligned}
			-\int_0^T\int_{\R^3}&\rho\nabla\zeta:(A\otimes A)+\zeta\cdot(A\wedge(\nabla\wedge J))+A\cdot J\diver\zeta\,dxdt\\
			=\int_0^T\int_{\R^3}&\zeta\cdot((A\cdot\nabla)(\rho A))+\zeta\cdot(A\wedge(\nabla\wedge(\rho A)))+\rho|A|^2\diver\zeta\\
			&-\zeta\cdot(A\wedge\IM(\nabla\bar\psi\wedge\nabla\psi))
			-A\cdot\IM(\bar\psi\nabla\psi)\diver\zeta\,dxdt.
	\end{aligned}\end{equation*}
	By \eqref{eq:tril} we know that the first two terms on the right hand side of the previous identity equal $A\cdot((\zeta\cdot\nabla)(\rho A))$, whence
	\begin{equation}\label{eq:ga3}
		\begin{aligned}
			-\int_0^T\int_{\R^3}&\rho\nabla\zeta:(A\otimes A)+\zeta\cdot(A\wedge(\nabla\wedge J))+A\cdot J\diver\zeta\,dxdt\\
			=\int_0^T\int_{\R^3}&A\cdot((\zeta\cdot\nabla)(\rho A))+\rho|A|^2\diver\zeta\\
			&-(A\wedge\IM(\nabla\bar\psi\wedge\nabla\psi))\cdot\zeta
			-A\cdot\IM(\bar\psi\nabla\psi)\diver\zeta\,dxdt\\
			=\int_0^T\int_{\R^3}&\frac{|A|^2}{2}\diver(\rho\zeta)
			-(A\wedge\IM(\nabla\bar\psi\wedge\nabla\psi))\cdot\zeta
			-A\cdot\IM(\bar\psi\nabla\psi)\diver\zeta\,dxdt,
	\end{aligned}\end{equation}
	where in the last equality we just integrated by parts once again. Combining \eqref{eq:ga1}, \eqref{eq:ga2} and \eqref{eq:ga3}, the expression \eqref{eq:hyb3} then becomes
	\begin{equation*}\begin{aligned}
			\int_0^T\int_{\R^3}&\IM(\bar\psi\nabla\psi)\cdot\d_t\zeta+\nabla\zeta:\RE(\nabla\bar\psi\otimes\nabla\psi)
			+\left(f'(\rho)+\phi+\frac{|A|^2}{2}\right)\diver(\rho\zeta)\\
			&-\IM(\bar\psi\nabla\psi)\cdot A\diver\zeta-\zeta\cdot\left(A\wedge\IM(\nabla\bar\psi\wedge\nabla\psi)\right)
			-\frac14\rho\Delta\diver\zeta\,dxdt\\
			+\int_{\R^3}&\IM(\bar\psi_0\nabla\psi_0)\cdot\zeta(0, \cdot)\,dx,
	\end{aligned}\end{equation*}
	that is the right hand side of \eqref{eq:w_mom_hyb}. The proof is complete.
\end{proof}
Next lemma provides somehow the analogue of formula \eqref{eq:con_par} for the momentum density. 
\begin{lemma}\label{prop:3}
	Let us fix $\gamma\in(1,3)$, $T>0$, $\sigma>1$, and let $(\psi,A)$ be a weak $M^{1,1}$-solution to the system \eqref{eq:MS} on the space-time slab $[0,T]\times\R^3$, with initial data $(\psi_0,A_0,A_1)\in M^{1,\sigma}$.Then the following identity holds:
	\begin{equation}\label{eq:ide_rip}
		\begin{split}
			\int_0^T\langle\psi\d_t\zeta^j,-i\d_j\psi\rangle
			+\langle \zeta^jF+F_{\zeta^j}, \d_j\psi\rangle
			+&\langle\psi\d_j\zeta^j, F\rangle \,dt\\
			+&\langle\psi_0\zeta^j(0), -i\d_j\psi_0\rangle=0.
		\end{split}
	\end{equation}
\end{lemma}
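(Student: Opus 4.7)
My plan is to adapt the strategy of Proposition \ref{pr:continuity}, combining Duhamel's formula \eqref{eq:duh} for $\psi$ with the identity \eqref{duha_eta} for $\zeta^j\psi$. The heuristic, which I would then make rigorous through the integrated Duhamel formulas, is that $A(t):=\langle(\zeta^j\psi)(t), -i\partial_j\psi(t)\rangle$ satisfies in a weak-time sense
\begin{equation*}
\frac{d}{dt}A(t) = \langle\psi\partial_t\zeta^j, -i\partial_j\psi\rangle + \langle\zeta^jF + F_{\zeta^j}, \partial_j\psi\rangle + \langle\psi\partial_j\zeta^j, F\rangle,
\end{equation*}
so that $\int_0^T \frac{d}{dt}A(t)\,dt = A(T) - A(0) = -\langle\psi_0\zeta^j(0), -i\partial_j\psi_0\rangle$ (using $\zeta(T, \cdot) \equiv 0$) yields \eqref{eq:ide_rip}. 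Since the weak solution only has $\partial_t\psi\in H^{-1}$, the derivation must go through integral formulations rather than pointwise differentiation in time.

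Setting $U(\tau):=e^{\frac{i}{2}\tau\Delta}$, the key identity is the telescoping
\begin{equation*}
-A(0) = \langle (\zeta^j\psi)(T) - U(T)(\zeta^j(0)\psi_0), -i\partial_j\psi(T)\rangle + \langle U(T)(\zeta^j(0)\psi_0), -i\partial_j\psi(T) - U(T)(-i\partial_j\psi_0)\rangle,
\end{equation*}
where I have used $A(T) = 0$ and the unitarity relation $A(0) = \langle U(T)(\zeta^j(0)\psi_0), U(T)(-i\partial_j\psi_0)\rangle$. I would then substitute the Duhamel representation \eqref{duha_eta} for $(\zeta^j\psi)(T) - U(T)(\zeta^j(0)\psi_0)$ in the first pairing, and the spatial derivative of \eqref{eq:duh} for $\partial_j\psi(T) - U(T)\partial_j\psi_0$ in the second (using that $\partial_j$ commutes with $U(\tau)$). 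The resulting expressions contain time integrals against $\psi\partial_t\zeta^j$, $F_{\zeta^j}$, and $\partial_j F$. Applying Fubini (justified by \eqref{eq:est_V}--\eqref{fubeta} and the compact support of $\zeta$), unitarity, and the forward Duhamel identity \eqref{duha_eta} once more to rewrite $\int_0^s U(s-\tau)(\psi\partial_\tau\zeta^j)\,d\tau$ in terms of $(\zeta^j\psi)(s)$, $U(s)(\zeta^j(0)\psi_0)$ and an integral of $F_{\zeta^j}$, I obtain a finite sum of terms, each controlled by the a priori estimates of Section \ref{sec:apr}.

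Assembling the pieces, two cancellations take place: the free-evolution contributions $\partial_j U(s)(\zeta^j(0)\psi_0)$ produced by the two pairings cancel after a single integration by parts in $x$, and the ``ghost'' terms involving $\partial_j\int U F_{\zeta^j}$ arising from the $\psi\partial_t\zeta^j$- and $-iF_{\zeta^j}$-subcomponents of the first pairing cancel between themselves (they enter with opposite signs after unitarity). What remains is exactly $I:=\int_0^T\langle\psi\partial_t\zeta^j, -i\partial_j\psi\rangle\,dt$ together with bulk pairings involving $\partial_j(\zeta^j\psi)$ and $F_{\zeta^j}$. Expanding $\partial_j(\zeta^j\psi) = \partial_j\zeta^j\,\psi + \zeta^j\,\partial_j\psi$ and invoking the symmetry $\langle\zeta^j\partial_j\psi, F\rangle = \langle\zeta^jF, \partial_j\psi\rangle$ (valid since $\zeta^j$ is real), these consolidate into $\int_0^T\langle\zeta^jF + F_{\zeta^j}, \partial_j\psi\rangle\,dt + \int_0^T\langle\psi\partial_j\zeta^j, F\rangle\,dt$, completing \eqref{eq:ide_rip}.

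The main obstacle will be the handling of the intermediate $\partial_jF$ and $\partial_jF_{\zeta^j}$ terms, which individually do not lie in any space compatible with the a priori estimates: they appear only through pairings with $H^1$-functions of compact support in $x$, so integration by parts in $x$ legitimately transfers $\partial_j$ onto the latter factor. After these IBPs, the cancellations described above become immediate. The gain of half a derivative $\psi\in L_T^2W^{1/2, 6}$ from Proposition \ref{th:apriori}, combined with the local integrability of $F$ and $F_{\zeta^j}$, ensures that every pairing in the final expression is well-defined, and the identity \eqref{eq:ide_rip} follows.
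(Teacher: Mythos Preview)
Your overall strategy---combining Duhamel's formula \eqref{eq:duh} for $\psi$ with the identity \eqref{duha_eta} for $\zeta^j\psi$, then exploiting unitarity and Fubini---is the same as the paper's. Your telescoping reorganization is a legitimate alternative to the paper's direct decomposition of $\int_0^T\langle\psi\partial_t\zeta^j,-i\partial_j\psi\rangle\,dt$ into terms $I+II$; both routes arrive at the same intermediate double time-integrals.

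There is, however, a genuine gap in your handling of the $\partial_jF$ terms. You claim they ``appear only through pairings with $H^1$-functions of compact support in $x$, so integration by parts in $x$ legitimately transfers $\partial_j$ onto the latter factor.'' This is not correct: once the free propagator $U(\tau)=e^{\frac{i}{2}\tau\Delta}$ acts, compact spatial support is destroyed. In particular, the critical pairing has the form
\[
\int_0^T\Big\langle\int_T^s U(s-t)F_{\zeta^j}(t)\,dt,\;-\partial_jF(s)\Big\rangle\,ds,
\]
and the Duhamel integral on the left is \emph{not} compactly supported. The bare estimates \eqref{eq:est_V}--\eqref{eq:esfet} place $F,F_{\zeta^j}$ only in $L_T^2L^{3/2}$, which is not enough to make this pairing well-defined, let alone to justify the Fubini exchange needed to reach it.

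The paper closes this gap in two steps that your proposal misses. First, it proves a \emph{refined} bound $\|F\|_{L_T^2(L^2+W^{1/2,6/5})}\lesssim_T 1$ (and the analogue for $F_{\zeta^j}$), using the full strength of the a~priori estimate $\psi\in L_T^2W^{1/2,6}$ on the power nonlinearity; this places $\partial_jF$ in the dual of $L_T^2(H^1\cap W^{1/2,6})$, which is exactly the space to which the Duhamel integrals belong by \eqref{eq:duh}, \eqref{duha_eta} and Strichartz. Second, even with these bounds the pointwise integrability over the $(s,t)$-triangle required for Fubini is not available, so the paper inserts a Yosida regularization $\mathcal{Y}_\eps=(1-\eps\Delta)^{-1}$ on $F_{\zeta^j}$, performs the (now trivially justified) exchange of integration, and passes to the limit $\eps\to 0$ using the duality just established. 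Without these two ingredients your argument does not go through.
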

\begin{proof}
	The proof is similar to that of Proposition \ref{pr:continuity}. For this reason here we might skip some passages which are completely analogous to the analysis performed for the Proposition \ref{pr:continuity}, in order to highlight the main differences, especially in the rigorous justification of some passages related to the integrability properties of $(\psi, A)$. 
	\newline
	As before, by using the definition of $J$ as in \eqref{eq:mad}, we consider
	\begin{equation}\label{eq:kng}
		\begin{split}
			\int_0^T\langle\psi\d_t\zeta^j, -i\d_j\psi&\rangle\,dt=\int_0^T\big\langle\psi\d_t\zeta^j, -ie^{\frac{i}{2}t\Delta}\d_j\psi_0\\
			&-\d_j\int_0^te^{\frac{i}{2}(t-s)\Delta}F(s)\,ds\big\rangle\,dt:=I+II,
		\end{split}
	\end{equation}
	where we also used the Duhamel's formula \eqref{eq:duh}. Now we treat the two contributions separately.
	
	\textbf{Term I.} 
	Using Fubini Theorem and the identity \eqref{duha_eta} we obtain
	\begin{equation*}\begin{aligned}
			I=&\Big\langle\int_0^Te^{\frac{i}{2}(T-t)\Delta}(\psi\d_t\zeta^j)(t)\,dt,
			-ie^{\frac{i}{2}T\Delta}\d_j\psi_0\Big\rangle\\
			=&\Big\langle(\psi\zeta^j)(T)-e^{\frac{i}{2}T\Delta}(\psi\zeta^j)(0)
			+i\int_0^Te^{\frac{i}{2}(T-t)\Delta}F_{\zeta^j}(t)\,dt, -ie^{\frac{i}{2}T\Delta}\d_j\psi_0\Big\rangle.
	\end{aligned}\end{equation*}
	The unitarity of the propagator $e^{\frac{i}{2}t\Delta}$ and the relation $\zeta^j(T)\equiv 0$ then yield
	\begin{equation}\label{est_muno}
		I=-\big\langle(\psi\zeta^j)(0), -i\d_j\psi_0\big\rangle
		+\int_0^T\big\langle iF_{\zeta^j}(t), -ie^{\frac{i}{2}t\Delta}\d_j\psi_0\big\rangle\,dt.
	\end{equation}
	
	\textbf{Term II.}
	Using again identity \eqref{duha_eta}, the unitarity of the propagator $e^{\frac{i}{2}t\Delta}$, and the relation $\zeta^j(T)\equiv 0$ we obtain
	\begin{equation}\label{est_mdue}
		\begin{aligned}
			II=&\int_0^T\Big\langle\int_s^Te^{\frac{i}{2}(s-t)\Delta}(\psi\d_t\zeta^j)(t)\,dt, -\d_jF(s)\Big\rangle\,ds\\
			=&\int_0^T\Big\langle-(\psi\zeta^j)(s)-i\int_T^se^{\frac{i}{2}(s-t)\Delta}F_{\zeta^j}(t)\,dt, -\d_jF(s)\Big\rangle\,ds,
		\end{aligned}
	\end{equation}
	where the various exchanges of order of integration are justified by estimate \eqref{fub}.

In order to proceed with the rigorous manipulation of term $II$, we need more precise estimates on $F$. To this aim, observe that
\begin{equation*}
	\begin{split}
\|F\|_{L_T^2(L^2+W^{1/2,6/5})}&\lesssim \|iA\nabla\psi+({\textstyle{\frac12}}|A|^2+\phi)\psi\|_{L_T^2L^2}+\||\psi|^{2(\gamma-1)}\psi\|_{L_T^2W^{1/2,6/5}}\\
&\lesssim_T \|A\|_{L_T^2L^{\infty}}\|\nabla\psi\|_{L_T^{\infty}L^2}+\|A\|_{L_T^{\infty}L^6}^2\|\psi\|_{L_T^{\infty}L^6}+\|\psi\|_{L_T^{\infty}H^1}^3\\
&\quad +\|\psi\|^{2(\gamma-1)}_{L_T^{\infty}L^{6(\gamma-1)\wedge 6}}\|\psi\|_{L_T^2W^{1/2+, 6/(7-2\gamma)\vee 2}},
\end{split}
\end{equation*}
where we used estimate \eqref{esti_conv} and Lemma \ref{esti_pure}. Combining the estimate above with the a priori bound \eqref{eq:apriori_uno} and the embedding $H^1\cap W^{1/2,6}\hookrightarrow W^{1/2+,6/(7-2\gamma)}$ we deduce
\begin{equation}\label{refinedF}
\|F\|_{L_T^2(L^2+W^{1/2,6/5})}\lesssim_T 1.
\end{equation}
As a direct consequence of \eqref{refinedF} and the definition \eqref{eq:F_eta} we also obtain
\begin{equation}\label{refinedFeta}
	\|F_{\zeta^j}\|_{L_T^2(L^2+W^{1/2,6/5})}\lesssim_T 1.	
\end{equation}
Observe moreover that the Duhamel formulas \eqref{eq:duh} and \eqref{duha_eta} imply
	\begin{gather}
		\label{impr_i}i\int_0^te^{\frac{i}{2}(t-s)\Delta}F(s)ds=e^{\frac{i}{2}t\Delta}\psi_0-\psi(t)\in L_T^{2}(H^1\cap W^{1/2,6}),\\
		\label{impr_ii}\begin{split}i\int_T^se^{\frac{i}{2}(s-t)\Delta}&F_{\zeta^j}(t)\,dt=-(\zeta^j\psi)(s)\\
			&+\int_T^se^{\frac{i}{2}(s-t)\Delta}(\psi\partial_t\zeta^j)(t)dt\in L_T^{2}(H^1\cap W^{1/2,6}),
		\end{split}
	\end{gather}
where we used the a priori bound \eqref{eq:apriori_uno} and the Strichartz estimates \eqref{clasti}-\eqref{class_stri}. For $\eps>0$, let us also consider the Yosida approximation operator $\mathcal{Y}_{\eps}:=(1-\eps\Delta)^{-1}$, and recall that $\mathcal{Y}_{\eps}f\rightarrow f$ in $W^{s,p}(\R^3)$ as $\eps\to 0$, for every $s\in\R$, $p\in(1,\infty)$ and $f\in W^{s,p}(\R^3)$. We have the chain of identities
	\begin{equation}\label{est_mtre}
		\begin{aligned}
			&\int_0^T\Big\langle-i\int_T^se^{\frac{i}{2}(s-t)\Delta}F_{\zeta^j}(t)\,dt, -\d_jF(s)\Big\rangle\,ds\\
			&=\lim_{\eps\to 0}\int_0^T\Big\langle-i\int_T^se^{\frac{i}{2}(s-t)\Delta}\mathcal{Y}_{\eps}F_{\zeta^j}(t)\,dt, -\d_jF(s)\Big\rangle\,ds\\
			&=\lim_{\eps\to 0}\int_0^T\big\langle i\mathcal{Y}_{\eps}F_{\zeta^{j}}(t),-\partial_j\int_0^te^{\frac{i}{2}(t-s)\Delta}F(s)ds\big\rangle\,dt\\
			&=\int_0^T\big\langle iF_{\zeta^{j}}(t),-\partial_j\int_0^te^{\frac{i}{2}(t-s)\Delta}F(s)ds\big\rangle\,dt,
		\end{aligned}
	\end{equation}
where the first and third steps are justified by estimates \eqref{refinedF}-\eqref{impr_ii}, which guarantee that all the duality products are well-defined, while in the second step we used the unitarity of the propagator $e^{\frac{i}{2}t\Delta}$ and Fubini Theorem, the exchange of order of integration being justified by estimates \eqref{eq:est_V} and \eqref{fubeta}.
	
	Combining \eqref{est_mdue} and \eqref{est_mtre} we deduce
	\begin{equation}\label{est_mquattro}
		II=\int_0^T\langle(\psi\zeta^{j})(s),\partial_j F(s)\rangle\,ds+\int_0^T\big\langle iF_{\zeta^{j}}(t),-\partial_j\int_0^te^{\frac{i}{2}(t-s)\Delta}F(s)ds\big\rangle\,dt.
	\end{equation}

	\textbf{I+II.} Combining the identities \eqref{eq:kng}, \eqref{est_muno} and \eqref{est_mquattro}, and using the Duhamel formula \eqref{eq:duh}  we obtain
	\begin{equation*}\begin{aligned}
			\int_0^T\langle&\psi\d_t\zeta^j, -i\d_j\psi\rangle\,dt=-\langle(\psi\zeta^j)(0), -i\d_j\psi_0\rangle\\
			&\quad+\int_0^T\langle iF_{\zeta^j}(t), -i\d_j\psi(t)\rangle
			+\langle\psi\zeta^j, \d_jF\rangle\,dt\\
			&=-\langle \psi_0\zeta^j(0), -i\d_j\psi_0\rangle-\int_0^T\langle \zeta^jF+F_{\zeta^j}, \d_j\psi\rangle
			+\langle\psi\d_j\zeta^j, F\rangle \,dt,
	\end{aligned}\end{equation*}
	where in the last step we integrate by parts the term $\langle\psi\zeta^j, \d_jF\rangle$. The proof is complete.
\end{proof}
We are now able to prove Proposition \ref{pr:momentum}
\begin{proof}[Proof of Proposition \ref{pr:momentum}]
	Owing to Lemma \ref{le:si}, and rewriting the right hand side of identity \eqref{eq:w_mom_hyb} by using repeated indices, we deduce that it is sufficient to show
	\begin{equation}\label{eq:w_mom_hyb2}\begin{aligned}
			\int_0^T\int_{\R^3}&\IM(\bar\psi\d_j\psi)\d_t\zeta^j
			+\d_j\zeta^k\RE(\overline{\d_j\psi}\d_k\psi)
			+\left(f'(\rho)+\phi+\frac{|A|^2}{2}\right)\d_j(\rho\zeta^j)\\
			&-\IM(\bar\psi\d_k\psi) A^k\d_j\zeta^j
			-2\zeta^jA^k\IM(\overline{\d_j\psi}\d_k\psi)-\frac14\rho\d_{kkj}\zeta^j\,dxdt\\
			+\int_{\R^3}&\IM(\bar\psi_0\d_j\psi_0)\zeta^j(0, \cdot)\,dx=0.
	\end{aligned}\end{equation}
	To this aim, we use the following identities:
	\begin{equation*}
		\begin{aligned}
			\RE(\bar F\d_j\psi)=&\RE\left(-iA^k\overline{\d_k\psi}\d_j\psi\right)
			+\left(\frac{|A|^2}{2}+\phi+\rho^{\gamma-1}\right)\d_j\frac{\rho}{2},\\
			\RE(\bar F\psi)=&\RE(-iA^k\overline{\d_k\psi}\psi)
			+\left(\frac{|A|^2}{2}+\phi+\rho^{\gamma-1}\right)\rho.
		\end{aligned}
	\end{equation*}
	Combining the identities above with \eqref{eq:ide_rip} we obtain
	\begin{equation*}\begin{aligned}
			\int_0^T\int_{\R^3}&\IM(\bar\psi\d_j\psi)\d_t\zeta^j
			+\d_j\zeta^k\RE(\overline{\d_j\psi}\d_k\psi)
			+\left(\frac{|A|^2}{2}+\phi+\rho^{\gamma-1}\right)\d_j(\rho\zeta^j)\\
			&-\IM(\bar\psi\d_k\psi) A^k\d_j\zeta^j
			-2\zeta^jA^k\IM(\overline{\d_j\psi}\d_k\psi)-\frac14\rho\d_{kkj}\zeta^j\,dxdt\\
			+\int_{\R^3}&\IM(\bar\psi_0\d_j\psi_0)\zeta^j(0, \cdot)\,dx\\
			=\int_0^T\int_{\R^3}&\IM(\bar\psi\d_j\psi)\d_t\zeta^j
			-2\zeta^j\IM(\d_j\bar\psi\d_k\psi)
			+\left(\frac{|A|^2}{2}+\phi+\rho^{\gamma-1}\right)\zeta^j\d_j\rho\\
			&+\d_k\zeta^j\RE(\d_k\bar\psi\d_j\psi)+\frac14\Delta\zeta^j\d_j\rho
			-A^k\IM(\bar\psi\d_k\psi)\d_j\zeta^j\\
			&+\left(\frac{|A|^2}{2}+\phi+\rho^{\gamma-1}\right)\rho\d_j\zeta^j\,dxdt
			+\int_{\R^3}\IM(\bar\psi_0\d_j\psi_0)\zeta^j(0, \cdot)\,dx\\
			=\int_0^T\langle\psi&\d_t\zeta^j,-i\d_j\psi\rangle
			+\langle \zeta^jF+F_{\zeta^j}, \d_j\psi\rangle
			+\langle\psi\d_j\zeta^j, F\rangle \,dt+\langle\psi_0\zeta^j(0), -i\d_j\psi_0\rangle=0,
	\end{aligned}\end{equation*}
	which concludes the proof.
\end{proof}

\section{Local smoothing estimates}\label{sec:ls}
The aim of this section is to provide suitable local smoothing estimates for the nonlinear Maxwell-Schr\"odinger system. This smoothing effect will be crucial in the proof of Theorem \ref{th:stability}, as it provides the compactness needed to deduce the stability of the hydrodynamic variables and the Lorentz force. The main result of this section is the following.

\begin{proposition}\label{pr:smoot_MS}
Let us fix $\gamma\in(1,3)$, $\sigma\in(1,\frac76)$, and $T>0$. Let $(\psi,A)$ be a weak $M^{1,1}$-solution to the system \eqref{eq:MS} on the space-time slab $[0,T]\times\R^3$, with initial data $(\psi_0,A_0,A_1)\in M^{1,\sigma}$. Then for every $\delta\in(0,\sigma-1\wedge\frac{3-\gamma}{2})$ we have the estimate
\begin{equation}\label{eq:losmoMS}
	\|\chi_{\alpha}\psi\|_{\ell_{\alpha}^{\infty}L_T^2H^{1+\delta}}\lesssim_T \langle\|(\psi,A)\|_{L_T^{\infty}(H^1\times H^1)}\rangle^n\langle\|(A_0,A_1)\|_{\Sigma^{\sigma}}\rangle^n.
\end{equation}	
\end{proposition}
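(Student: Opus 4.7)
The plan is to upgrade the global $H^1$ regularity of $\psi$ to local $H^{1+\delta}$ regularity by combining the Duhamel representation \eqref{eq:duh} with the Kato-type local-smoothing estimate, which gains $1/2$ of a derivative locally in space. Since $\delta < \sigma - 1 < 1/6 < 1/2$, the extra $\delta$ derivatives we want are comfortably within the $1/2$ bonus that local smoothing provides. Up to a commutator $[\mathcal{D}^{1+\delta}, \chi_{\alpha}]$ of order $\delta$ (which maps $H^\delta$ to $L^2$ and is therefore absorbed by $\|\psi\|_{L_T^\infty H^1}$), the target norm $\|\chi_{\alpha}\psi\|_{\ell_{\alpha}^{\infty}L_T^2 H^{1+\delta}}$ is equivalent to $\|\chi_{\alpha}\mathcal{D}^{1+\delta}\psi\|_{\ell_{\alpha}^{\infty}L_T^2 L^2}$.

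Representing $\psi$ via \eqref{eq:duh} and commuting $\mathcal{D}^{1/2+\delta}$ through the free Schr\"odinger propagator, the free-data contribution is estimated by the homogeneous local-smoothing inequality as
\[
\|\chi_{\alpha}\mathcal{D}^{1+\delta}e^{\frac{i}{2}t\Delta}\psi_0\|_{\ell_{\alpha}^{\infty}L_T^2 L^2} \lesssim \|\mathcal{D}^{1/2+\delta}\psi_0\|_{L^2}\leq \|\psi_0\|_{H^1}.
\]
For the inhomogeneous part, the same redistribution combined with \eqref{sm-du-st} reduces the problem to bounding
\[
\|\mathcal{D}^{1/2+\delta}F\|_{L_T^1 L^2 + L_T^2 L^{6/5}},\qquad F = iA\cdot\nabla\psi + \tfrac12|A|^2\psi + \phi\psi + |\psi|^{2(\gamma-1)}\psi.
\]
Each of the four summands of $F$ will then be handled separately using the a priori estimate \eqref{eq:apriori_uno} of Proposition \ref{th:apriori}, which supplies $A\in L_T^\infty H^{\sigma}\cap L_T^2 L^\infty$ together with $\psi\in L_T^2 W^{1/2,6}$.

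The magnetic drift $A\cdot\nabla\psi$ is treated by the fractional Leibniz rule \eqref{frac_leibniz}, pairing $\mathcal{D}^{1/2+\delta}A\in L_T^\infty L^3$ against $\nabla\psi\in L_T^\infty L^2$ (the first membership following from the Sobolev embedding $H^{\sigma}\hookrightarrow W^{1/2+\delta,3}$, which is where the threshold $\delta<\sigma-1$ enters), together with the dual allocation placing $A\in L_T^2 L^\infty$ and $\nabla\psi$ with $\delta$ extra derivatives obtained by interpolating $\psi\in L_T^2 W^{1/2,6}$ with $\psi\in L_T^\infty H^1$. The quadratic terms $|A|^2\psi$ and $\phi\psi$ are easier: they are controlled by Sobolev product estimates in $H^{\sigma}$ for $\sigma>1$ and by the Hartree bound \eqref{esti_conv} together with fractional Sobolev embedding, both with plenty of room.

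The main obstacle will be the pure-power nonlinearity $|\psi|^{2(\gamma-1)}\psi$, especially for $\gamma$ close to $3$. I would apply the $\varepsilon$-loss estimate of Lemma \ref{esti_pure} with $s=1/2+\delta$, $q=6/5$ and a suitably chosen Lebesgue exponent $p$, and then interpolate the two a priori bounds $\psi\in L_T^\infty H^1$ and $\psi\in L_T^2 W^{1/2,6}$ in order to control both factors $\|\psi\|_{L_T^{r_1}L^{\widetilde p}}^{2(\gamma-1)}$ and $\|\psi\|_{L_T^{r_2}W^{1/2+\delta+\varepsilon,p}}$ simultaneously. The feasibility of this joint interpolation, namely matching the scaling relations $(2\gamma-2)/r_1 + 1/r_2 = 1/2$ and $\widetilde{p}=(2\gamma-1)pq/(p-q)$ against what the two base norms actually furnish via Sobolev and mixed-norm H\"older inequalities, is exactly what enforces the constraint $\delta < (3-\gamma)/2$; for $\gamma\geq 5/2$ the $L_T^\infty L^6$ embedding of $H^1$ no longer suffices for $\widetilde p$ and one must lean on the $L_T^2 L^r$ bound (any $r<\infty$) coming from $\psi\in L_T^2 W^{1/2,6}$, which tightens the interpolation window. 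Summing the four contributions of $F$ together with the free-part estimate and the harmless commutator corrections then yields \eqref{eq:losmoMS}.
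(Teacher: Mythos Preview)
Your strategy has a genuine derivative-loss gap at the magnetic drift term. To bound $\|\mathcal D^{1/2+\delta}(A\cdot\nabla\psi)\|_{L_T^2L^{6/5}}$ (or $L_T^1L^2$) via any Kato--Ponce/Leibniz estimate, one of the two terms is necessarily of the form $\|A\|_{L^{p}}\|\mathcal D^{1/2+\delta}\nabla\psi\|_{L^{q}}$, i.e.\ it requires $3/2+\delta$ derivatives of $\psi$ in some Lebesgue space. The available a priori information $\psi\in L_T^\infty H^1\cap L_T^2W^{1/2,6}$ gives at most one derivative; neither interpolation between these norms nor the divergence-free rewriting $A\cdot\nabla\psi=\diver(A\psi)$ changes this count. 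Your ``dual allocation'' with only $\delta$ extra derivatives on $\nabla\psi$ is not what the Leibniz rule produces: the derivatives cannot be split as $1/2$ on $A$ and $\delta$ on $\nabla\psi$ in a single term. Exploiting $A\in L_T^2L^\infty$ helps only with the H\"older pairing in time, not with the missing half-derivative on $\psi$ in space.

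This is precisely why the paper does \emph{not} treat $iA\cdot\nabla\psi+\tfrac12|A|^2\psi$ as a forcing for the free flow. Instead it keeps these terms inside the magnetic Laplacian $-\Delta_A$ and proves a local smoothing estimate for the \emph{magnetic} evolution (Proposition~\ref{pr:local_smoothing}) by a covariant positive-commutator argument with the multiplier $L_{\alpha,j}=\mathcal D_A^{-\theta}h_{\alpha,j}\partial_{A_j}\mathcal D_A^{-\theta}$. The price is the curvature commutator $[\partial_{A_j},\partial_{A_k}]=i(\partial_jA_k-\partial_kA_j)$, which produces the term $\Theta_j$ involving $\operatorname{rot}A$; controlling it is exactly where the hypothesis $\sigma>1$ is spent, via the refined Koch--Tzvetkov bound (Lemma~\ref{le:improved-kt}) and the $\ell_\alpha^2$-localized wave Strichartz estimate (Lemma~\ref{le:uno_smoothing}). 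Only after this magnetic smoothing is in place does the paper invoke \eqref{sm-du-st} for the genuinely lower-order forcings $\phi\psi$ and $|\psi|^{2(\gamma-1)}\psi$, and the constraint $\delta<(3-\gamma)/2$ arises there in estimating $\||\psi|^{2(\gamma-1)}\psi\|_{L_T^2W^{1/2+\delta,6/5}}$ via Lemma~\ref{esti_pure} and the embedding $H^1\cap W^{1/2,6}\hookrightarrow W^{1/2+(3-\gamma)/2,\,6/(7-2\gamma)\vee 2}$.
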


In order to prove Proposition \ref{pr:smoot_MS} we first obtain a smoothing effect for the linear Schr\"odinger equation with a fixed, time-dependent magnetic potential, and then we apply it to the case of the Maxwell-Schr\"odinger system.

\subsection{Smoothing effect for a fixed magnetic potential}
We assume here that the magnetic potential $A$ is fixed and satisfies the following assumption:

\begin{hyp}\label{ass:A}
$A\in L_T^{\infty}H^1\cap W_T^{1,\infty}L^2$ for some $T>0$, and $\diver A (t,\cdot)=0$ for every $t\in(0,T)$. Moreover, for every wave-admissible pair $(q,r)$ we have that $\chi_{\alpha}A\in \ell_{\alpha}^2L_T^{q}W^{1-\frac2q,r}$. 
\end{hyp}
We are going to show that if $A$ satisfies Assumption $\ref{ass:A}$ and $\operatorname{rot}A$ is slightly more regular than $L^2$, then the linear magnetic Schr\"odinger flow exhibits a local-smoothing effect.

\begin{proposition}\label{pr:local_smoothing}
Let us fix $s\in[0,2)$, $\sigma\in(1,\frac54)$ and $\delta\in(0,\min\{\sigma-1,2-s\})$. Suppose that $A$ satisfy Assumption \ref{ass:A}, and assume in addition
\begin{equation}\label{extra_rot}
\chi_{\alpha}\operatorname{rot}A\in\ell_{\alpha}^2L_T^{\infty}H^{\sigma-1}.
\end{equation}
Fix $F\in L_T^2H^{s+2\delta-1}$, and  let $\psi\in L_T^{\infty}H^s$ be a weak solution to the magnetic Schr\"odinger equation $i\partial_t\psi=-\Delta_A\psi+F$. Consider moreover the wave-admissible pairs $(q_1,r_1)$, $(q_2,r_2)$ given by $\frac{2}{r_1}=3-2\sigma+\delta$ and $\frac{1}{r_2}=2(\sigma-1-\delta)$. Then we have the estimate
\begin{equation}\label{eq:local_smoothing}
\|\chi_{\alpha}\psi\|_{\ell_{\alpha}^{\infty}L_T^2H^{s+\delta}}\lesssim_{T}\langle\|A\|_{\mathcal{X}_T}\rangle^n\Big(\|\psi\|_{L_T^{\infty}H^s}+\|F\|_{L_T^2H^{s+2\delta-1}}\Big),
\end{equation}
where we set
\begin{equation}\label{def:xt}
\begin{split}
\|A\|_{\mathcal{X}_T}&:=\|A\|_{L_T^{\infty}H^1\cap W_T^{1,\infty}L^2}+\|\chi_{\alpha}A\|_{\ell_{\alpha}^2L_T^{q_1}W^{1-\frac{2}{q_1},{r_1}}}\\
&\quad+\|A\|_{L_T^{q_2}W^{1-\frac{2}{q_2},{r_2}}}+\|\chi_{\alpha}\operatorname{rot}A\|_{\ell_{\alpha}^2L_T^{\infty}H^{\sigma-1}}.
\end{split}
\end{equation}
\end{proposition}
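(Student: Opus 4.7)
The plan is to recast the magnetic equation as a free Schr\"odinger equation with a source, and apply the local smoothing estimate \eqref{sm-du-st}. Since $\diver A=0$, the magnetic Laplacian expands as $-\Delta_A=-\Delta+2iA\cdot\nabla+|A|^2$, so $\psi$ solves $i\partial_t\psi=-\Delta\psi+G$ with $G:=2iA\cdot\nabla\psi+|A|^2\psi+F$, and Duhamel gives
\[
\psi(t)=e^{it\Delta}\psi(0)-i\int_0^t e^{i(t-s)\Delta}G(s)\,ds.
\]
The homogeneous part is controlled by interpolating the Kato bound $\|\chi_\alpha\mathcal{D}^{1/2}e^{it\Delta}f\|_{\ell^\infty_\alpha L^2_TL^2}\lesssim\|f\|_{L^2}$ (the $TT^*$-dual of \eqref{sm-du-st}) with the trivial $L^\infty_TH^s$-bound on $e^{it\Delta}\psi(0)$; this covers any $\delta<\tfrac12$, in particular our $\delta<\sigma-1<\tfrac14$. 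For the inhomogeneous part, writing $\mathcal{D}^{s+\delta}=\mathcal{D}^{1/2}\circ\mathcal{D}^{s+\delta-1/2}$ and applying \eqref{sm-du-st} with source $\mathcal{D}^{s+\delta-1/2}G$ reduces matters to bounding $\|\mathcal{D}^{s+\delta-1/2}G\|$ in a Strichartz-dual norm (possibly after extending \eqref{sm-du-st} to more general Schr\"odinger-admissible dual pairs via the usual $TT^*$ plus Christ--Kiselev arguments).

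The external source $F\in L^2_TH^{s+2\delta-1}$ is inserted directly through Sobolev embedding into such a dual pair, with a factor $T^{1/2}$ and admissibility guaranteed by $\delta<2-s$. The zero-order term $|A|^2\psi$ is controlled by the fractional Leibniz rule \eqref{frac_leibniz} together with H\"older: the $s+\delta-1/2$ derivatives are distributed between the two copies of $A$ using the Strichartz norm $A\in L^{q_2}_TW^{1-2/q_2,r_2}$ from $\mathcal{X}_T$, while $\psi$ is placed in a Koch--Tzvetkov Strichartz space via Lemma \ref{le:due}, which in turn is controlled by $\|\psi\|_{L^\infty_TH^s}$ and the source $F$. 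The principal and most delicate contribution is the convective term $2iA\cdot\nabla\psi$: one applies \eqref{frac_leibniz} to distribute derivatives between $A$ and $\nabla\psi$, choosing the split parameter so that $A$ carries no more than its Sobolev regularity (at most $\sigma$ locally, see below) and no more than its Strichartz regularity $1-2/q_1=3-2\sigma+\delta$ in $W^{\cdot,r_1}$, while the residual derivatives on $\psi$ stay at most $s$ and are absorbed by $\|\psi\|_{L^\infty_TH^s}$; the H\"older pairing in space-time is precisely matched by the wave-admissibility of $(q_1,r_1)$.

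The crucial local regularity upgrade on $A$ is provided by the hypothesis $\chi_\alpha\operatorname{rot}A\in\ell^2_\alpha L^\infty_TH^{\sigma-1}$: combined with $\diver A=0$, the identity $-\Delta A=-\nabla\wedge\operatorname{rot}A$ and interior elliptic regularity yield $\chi_\alpha A\in\ell^2_\alpha L^\infty_TH^\sigma$, up to a lower-order piece absorbed via $A\in L^\infty_TH^1$. The main obstacle is precisely the magnetic transport term $A\cdot\nabla\psi$: Kato-type smoothing yields only a $\tfrac12$-derivative gain, whereas a full derivative sits on $\psi$ in the source, leaving a half-derivative deficit that can be recovered only by the extra local Sobolev regularity of $A$ inferred from $\operatorname{rot}A\in H^{\sigma-1}$. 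The simultaneous interplay of the Coulomb-gauge structure, the Schr\"odinger local smoothing, and this curl regularity is exactly what dictates both the form of $\mathcal{X}_T$ and the narrow admissible range $\delta<(\sigma-1)\wedge(2-s)$.
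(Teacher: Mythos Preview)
Your perturbative approach---expanding $-\Delta_A=-\Delta+2iA\cdot\nabla+|A|^2$ and treating the magnetic terms as sources for the free flow via \eqref{sm-du-st}---does not close on the transport term $2iA\cdot\nabla\psi$. To obtain $\chi_\alpha\psi\in\ell^\infty_\alpha L^2_TH^{s+\delta}$ from \eqref{sm-du-st} you need $A\cdot\nabla\psi\in L^2_TW^{s+\delta-1/2,6/5}$ (or $L^1_TH^{s+\delta-1/2}$). The fractional Leibniz rule \eqref{frac_leibniz} places the full $s+\delta-\tfrac12$ derivatives on one factor at a time: on $A$ this requires $A\in W^{s+\delta-1/2,\cdot}$, which for $s$ close to $2$ exceeds the available $\sigma<\tfrac54$ derivatives; on $\nabla\psi$ it requires $\psi\in W^{s+\delta+1/2,\cdot}$, which is $\tfrac12+\delta$ derivatives above $\psi\in L^\infty_TH^s$. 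Neither the magnetic Koch--Tzvetkov bound ($\psi\in L^2_TW^{s-1/2,6}$) nor a bootstrap with the left-hand side ($\psi\in L^2_TH^{s+\delta}$ locally) recovers this deficit, since both leave you exactly $\tfrac12$ derivative short on $\psi$. The claim that one can ``distribute derivatives between $A$ and $\nabla\psi$'' via \eqref{frac_leibniz} is not correct: the Kato--Ponce estimate puts the full load on one factor, and the generalized version with negative shifts only makes things worse.

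The paper circumvents this obstruction by \emph{not} treating $A\cdot\nabla\psi$ as a perturbation. It runs a positive-commutator argument directly for the magnetic Laplacian: with $\theta=1-\delta$ and the multiplier $L_{\alpha,j}=\mathcal{D}_A^{-\theta}h_{\alpha,j}\partial_{A_j}\mathcal{D}_A^{-\theta}$ (where $h$ is a Morawetz-type weight with $h'\equiv1$ on $Q_\alpha$), one computes $[L_{\alpha,j},-\Delta_A]=\mathcal{D}_A^{-\theta}\big(ih_{\alpha,j}\Theta_j+2\partial_{A_j}h'_{\alpha,j}\partial_{A_j}-h''_{\alpha,j}\partial_{A_j}\big)\mathcal{D}_A^{-\theta}$. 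The point is that the top-order first-derivative contribution is the \emph{positive} term $2\partial_{A_j}h'_{\alpha,j}\partial_{A_j}$, which produces the smoothing, while the magnetic potential enters the remainder only through $\Theta_j$, i.e.\ through $\operatorname{rot}A$. This is precisely why the hypothesis is on $\chi_\alpha\operatorname{rot}A\in\ell^2_\alpha L^\infty_TH^{\sigma-1}$ and why the localized improved Koch--Tzvetkov estimate (Lemma \ref{le:improved-kt}) and the commutator bound (Lemma \ref{lela}) are needed to close. Your outline misses this structural cancellation; the gauge-covariant commutator method is essential, not merely an alternative packaging.
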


\begin{remark}
It is worth comparing Theorem \ref{pr:local_smoothing} with the local smoothing result for the magnetic Schr\"odinger equation in two dimensions, proved in \cite[Corollary 3.1]{Wada2012}. First of all, in the 2D case no extra-regularity assumption on the magnetic potential as \eqref{extra_rot} is required. Moreover, in 2D the gain of regularity for the Schr\"odinger flow is almost sharp (i.e.~a gain of almost $1/2$-derivatives), whilst Theorem \ref{pr:local_smoothing} provides a gain of at most $1/4$-derivatives, even in the case of a smooth magnetic potential.
\end{remark}

In order to prove Theorem \ref{pr:local_smoothing}, we preliminary need a refined version of the Koch-Tzvetkov estimates for the magnetic Schr\"odinger evolution, in the same spirit as in \cite[Lemma 3.1]{Wada2012}.

\begin{lemma}\label{le:improved-kt}
Suppose that $A$ satisfies Assumption \ref{ass:A}, and let us fix $s\in[0,2)$, $\theta\in(0,1)$, and $m\in\left(\frac{\theta-1}{2},\frac{2\theta-1}{2}\right)$. Set moreover $\widetilde{s}:=2m-\theta+1\in (0,\theta)$, and consider the wave-admissible pairs $(q_1,r_1)$, $(q_2,r_2)$ given by $\frac{2}{r_1}=\widetilde{s}$ and $\frac{1}{r_2}=\theta-\widetilde{s}$. Then we have the estimate
\begin{equation}\label{eq:improved-kt}
\|\chi_{\alpha}\mathcal{D}_{A}^{s-\theta}\psi\|_{\ell_{\alpha}^2L_T^2W^{m,6}}\lesssim_T \langle C_A\rangle^n\Big(\|\psi\|_{L_T^{\infty}H^s}+\|F\|_{L_T^2H^{s-2\theta+2m}}\Big),
\end{equation}
where $$C_A:=\|A\|_{L_T^{\infty}H^1\cap W_T^{1,\infty}L^2}+\|\chi_{\alpha}A\|_{\ell_{\alpha}^2L_T^{q_1}W^{1-\frac{2}{q_1},{r_1}}}+\|A\|_{L_T^{q_2}W^{1-\frac{2}{q_2},{r_2}}}.$$
\end{lemma}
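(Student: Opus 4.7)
The plan is to refine the magnetic Koch-Tzvetkov estimate of Lemma~\ref{le:quattordici} by introducing a spatial localization, following the strategy of \cite[Lemma 3.1]{Wada2012} adapted to three dimensions. The proof rests on three ingredients: the classical $L^6$ Koch-Tzvetkov bound \eqref{eq:kt} for the free Schr\"odinger equation, the equivalence of magnetic and non-magnetic Sobolev norms from Lemma~\ref{le:quattro}, and product estimates controlling the magnetic perturbation against the Strichartz-type norms of $A$ appearing in $C_A$.

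First, using Lemma~\ref{le:quattro} together with the uniform bound $A\in L_T^{\infty}\dot{H}^1$ and $s-\theta\in(-1,2)\subset[-2,2]$, one converts the left hand side to a non-magnetic Sobolev norm at the cost of a factor $\langle\|A\|_{L_T^{\infty}H^1}\rangle^n$ that is absorbed into $\langle C_A\rangle^n$. Modulo harmless commutators between $\chi_{\alpha}$ and $\mathcal{D}^{s-\theta}$ (handled by standard paraproduct decompositions), this reduces the problem to bounding $\|\chi_{\alpha}\psi\|_{\ell_{\alpha}^2 L_T^2 W^{s-\theta+m,6}}$. Next, exploiting the Coulomb gauge $\diver A=0$, I would rewrite the equation as $(i\partial_t+\Delta)\psi=G$ with $G:=2iA\cdot\nabla\psi+|A|^2\psi+F$, so that $u_{\alpha}:=\chi_{\alpha}\psi$ satisfies
\begin{equation*}
(i\partial_t+\Delta)u_{\alpha}=\chi_{\alpha}G+[\Delta,\chi_{\alpha}]\psi,
\end{equation*}
where the commutator is supported in a shell around $Q_{\alpha}$.

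Then apply the free Koch-Tzvetkov estimate \eqref{eq:kt} to $u_{\alpha}$ with loss parameter $\beta:=\theta-m$, which by hypothesis lies in $(\tfrac12,\tfrac{\theta+1}{2})$, and square and sum over $\alpha\in\Z^3$. By \eqref{eq:disin} with $r=2$, the initial-datum term $\|\chi_{\alpha}\psi\|_{L_T^2 H^s}$ sums to $T^{1/2}\|\psi\|_{L_T^{\infty}H^s}$, the commutator $[\Delta,\chi_{\alpha}]\psi$ is controlled by $\|\psi\|_{L_T^2 H^{s-2\beta+1}}\lesssim\|\psi\|_{L_T^{\infty}H^s}$ thanks to $2\beta>1$, and the external forcing $\chi_{\alpha}F$ contributes $\|F\|_{L_T^2 H^{s-2\theta+2m}}$, matching the statement. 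The crux is then controlling the magnetic pieces $\chi_{\alpha}(A\cdot\nabla\psi)$ and $\chi_{\alpha}(|A|^2\psi)$ in $\ell_{\alpha}^2 L_T^2 H^{s-2\theta+2m}$. Using H\"older in time against the wave-admissible pair $(q_1,r_1)$ for the former and the globally-available pair $(q_2,r_2)$ for the latter, combined with the generalized Leibniz rule \eqref{frac_leibniz} in space, one estimates the leading bilinear term by
\begin{equation*}
\|\chi_{\alpha}A\|_{\ell_{\alpha}^2 L_T^{q_1}W^{1-2/q_1,r_1}}\,\|\nabla\psi\|_{L_T^{q'_1}W^{\kappa,r'_1}},
\end{equation*}
where $\kappa$ is fixed by the scaling relation $\widetilde{s}=2m-\theta+1$. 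The factor on $A$ is precisely the Strichartz norm featured in $C_A$, while the factor on $\psi$ is recovered by interpolation between $\|\psi\|_{L_T^{\infty}H^s}$ and the dispersive bound just established; a small bootstrap closes the estimate.

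The main obstacle will be the precise choice of Sobolev indices in the bilinear estimate for $\chi_{\alpha}(A\cdot\nabla\psi)$: one must simultaneously ensure that the $\ell_{\alpha}^2$ summation is compatible with \eqref{eq:disin}, that $(q_1,r_1)$ has the correct dual exponent to match the Strichartz scaling, and that the loss $\beta=\theta-m$ is exactly compensated by the regularity $1-2/q_1$ carried by $\chi_{\alpha}A$. This is where the constraints $m\in(\tfrac{\theta-1}{2},\tfrac{2\theta-1}{2})$ and $\widetilde{s}\in(0,\theta)$ become essential: they pin down the unique regime in which all three Strichartz/Koch-Tzvetkov balances close, and they also force the appearance of both pairs $(q_1,r_1)$ and $(q_2,r_2)$ in the norm $C_A$, needed respectively for the linear and quadratic pieces of the magnetic perturbation.
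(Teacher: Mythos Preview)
Your overall skeleton---localize, apply the free Koch--Tzvetkov bound \eqref{eq:kt}, treat the magnetic pieces as forcing and close by a bootstrap---is the right one, and matches the paper's strategy. However, your very first reduction contains a gap. You invoke Lemma~\ref{le:quattro} to convert $\|\chi_{\alpha}\mathcal{D}_A^{s-\theta}\psi\|_{W^{m,6}}$ into $\|\chi_{\alpha}\psi\|_{W^{s-\theta+m,6}}$, but that lemma only gives the equivalence $H^s\cong H_A^s$, i.e.\ an $L^2$-based statement; it says nothing about $L^6$-based Sobolev norms of $\mathcal{D}_A^{s-\theta}$. There is no lemma in the paper (nor any cheap argument) that lets you commute $\mathcal{D}_A^{s-\theta}$ past a $W^{m,6}$ norm, so the reduction to working directly with $\chi_{\alpha}\psi$ is not justified. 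The paper avoids this issue altogether by applying \eqref{eq:kt} \emph{directly to the function} $\chi_{\alpha}\mathcal{D}_A^{s-\theta}\psi$, for which
\[
(i\partial_t+\Delta)(\chi_{\alpha}\mathcal{D}_A^{s-\theta}\psi)=g_{\alpha}+\chi_{\alpha}\mathcal{D}_A^{s-\theta}F,
\]
with $g_{\alpha}$ collecting the magnetic term $2iA\cdot\nabla(\chi_{\alpha}\mathcal{D}_A^{s-\theta}\psi)$, the quadratic term, the spatial commutator $[\chi_{\alpha},\Delta_A]$, \emph{and} a time-commutator $i\chi_{\alpha}[\partial_t,\mathcal{D}_A^{s-\theta}]\psi$ (since $A$ is time-dependent). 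The latter is absent from your scheme and in the paper is handled by a separate commutator bound (Lemma~\ref{lela}); this is also why the norm $\|A\|_{W_T^{1,\infty}L^2}$ appears in $C_A$.

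A second, smaller inaccuracy: you assign $(q_1,r_1)$ to the linear piece $A\cdot\nabla\psi$ and $(q_2,r_2)$ to the quadratic piece $|A|^2\psi$. In the paper both pairs arise in the estimate of the \emph{same} term $A\cdot\nabla(\chi_{\alpha}\mathcal{D}_A^{s-\theta}\psi)$, coming from the two halves of the fractional Leibniz rule applied to $\chi_{\alpha}A\cdot\mathcal{D}_A^{s-\theta}\psi$ in $H^{2m-\theta+1}$; the $|A|^2$ term is controlled using only $\|A\|_{L_T^{\infty}H^1}$. The bootstrap then closes via the interpolation $\|\mathcal{D}_A^{s-\theta}\psi\|_{L_T^{2/\widetilde{s}}L^{2/(1-\widetilde{s})}}\lesssim\|\mathcal{D}_A^{s-\theta}\psi\|_{L_T^2W^{m,6}}^{1-\delta}\|\mathcal{D}_A^{s-\theta}\psi\|_{L_T^{\infty}H^{\theta}}^{\delta}$ and Young's inequality, rather than through the dual exponent $q_1'$ you propose.
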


\begin{remark}
In terms of gain of integrability, Lemma \ref{le:improved-kt} does not improve on the standard version of the magnetic Koch-Tzvetkov estimates provided by Lemma \ref{le:quattordici}. Here the improvement is in terms of summability with respect to the spatial localization.
\end{remark}

\begin{proof}
Observe that the function $\chi_{\alpha}\mathcal{D}_{A}^{s-\theta}\psi$ satisfies the equation
$$(i\partial_t+\Delta)(\chi_{\alpha}\mathcal{D}_{A}^{s-\theta}\psi)=g_{\alpha}+\chi_{\alpha}\mathcal{D}_A^{s-\theta}F,$$
where we set
$$g_{\alpha}:=2iA\nabla(\chi_{\alpha}\mathcal{D}_{A}^{s-\theta}\psi)+\chi_{\alpha}|A|^2\mathcal{D}_{A}^{s-\theta}\psi+i\chi_{\alpha}[\partial_t,\mathcal{D}_A^{s-\theta}]\psi-[\chi_{\alpha},\Delta_A]\mathcal{D}_A^{s-\theta}\psi.$$
We then obtain
\begin{equation}\label{eq:zeroth_final}
\begin{split}
&\|\chi_{\alpha}\mathcal{D}_{A}^{s-\theta}\psi\|_{\ell_{\alpha}^2L_T^2W^{m,6}}^2\\
&\lesssim T^{-1}\|\chi_{\alpha}\mathcal{D}_{A}^{s-\theta}\psi\|_{\ell_{\alpha}^2L_T^{2}H^{\theta}}^2+T\|g_{\alpha}+\chi_{\alpha}\mathcal{D}_A^{s-\theta}F\|_{\ell_{\alpha}^2L_T^2H^{2m-\theta}}^2\\
&\lesssim \|\mathcal{D}_{A}^{s-\theta}\psi\|_{L_T^{2}H^{\theta}}^2+T\|g_{\alpha}\|_{\ell_{\alpha}^2L_T^2H^{2m-\theta}}^2+T\|\mathcal{D}_A^{s-\theta}F\|_{L_T^2H^{2m-\theta}}^2\\
&\lesssim_T \langle\|A\|_{L_T^{\infty}H^1}^n\rangle\Big(\|\psi\|_{L_T^{\infty}H^s}^2+\|F\|_{L_T^2H^{s+2m-2\theta}}^2\Big)+\|g_{\alpha}\|_{\ell_{\alpha}^2L_T^2H^{2m-\theta}}^2,
\end{split}
\end{equation}
where we used estimate \eqref{eq:kt} in the first step, the bound \eqref{eq:disin} in the second step and the equivalence of norms \eqref{equicono} in the last step. Let us estimates the various terms appearing in $g_{\alpha}$. Using the fractional Leibniz rule, H\"older inequality in $\alpha,t$, the bound \eqref{eq:disin} and Sobolev embedding we obtain
\begin{equation}\label{gia_uno}
\begin{split}
\|A\nabla(\chi_{\alpha}\mathcal{D}_{A}^{s-\theta}\psi)\|_{\ell_{\alpha}^2L_T^2H^{2m-\theta}}&\lesssim \|\chi_{\alpha}A\mathcal{D}_{A}^{s-\theta}\psi\|_{\ell_{\alpha}^2L_T^2H^{2m-\theta+1}}\\
&\lesssim \|A\|_{L_T^2W^{1-2/q_2,r_2}}\|\mathcal{D}_{A}^{s-\theta}\psi\|_{L_T^{\infty}H^{\theta}}\\
&\quad+\|\chi_{\alpha}A\|_{\ell_{\alpha}^2L_T^{q_1}W^{1-2/q_1,r_1}}\|\mathcal{D}_{A}^{s-\theta}\psi\|_{L_T^{2/\widetilde{s}}L^{2/(1-\widetilde{s})}}.
\end{split}
\end{equation}
Observe that, owing to the hypothesis on $m$ and $\theta$, we can find $\delta\in(0,1)$ such that
\begin{equation}\label{gia_tre}
\|\mathcal{D}_{A}^{s-\theta}\psi\|_{L_T^{2/\widetilde{s}}L^{2/(1-\widetilde{s})}}\lesssim_T \|\mathcal{D}_{A}^{s-\theta}\psi\|_{L_T^2W^{m,6}}^{1-\delta}\|\mathcal{D}_{A}^{s-\theta}\psi\|_{L_T^{\infty}H^{\theta}}^{\delta}.
\end{equation}
Combining \eqref{gia_uno}, \eqref{gia_tre}, and using the Young inequality and the equivalence of norms \eqref{equicono} we get that for every $\varepsilon>0$
 \begin{equation}\label{eq:first_final}
\|A\nabla(\chi_{\alpha}\mathcal{D}_{A}^{s-\theta}\psi)\|_{\ell_{\alpha}^2L_T^2H^{2m-\theta}}\lesssim \langle C_A\rangle^n\Big(\varepsilon\|\mathcal{D}_{A}^{s-\theta}\psi\|_{L_T^2W^{m,6}}+\varepsilon^{1-\frac{1}{\delta}}\|\psi\|_{L_T^{\infty}H^{s}}\Big).
\end{equation}

For the second term in $g_{\alpha}$, we have the bound
\begin{equation}\label{eq:second_final}
\begin{split}
\|\chi_{\alpha}|A|^2\mathcal{D}_{A}^{s-\theta}\psi\|_{\ell_{\alpha}^2L_T^2H^{2m-\theta}}&\lesssim \||A|^2\mathcal{D}_{A}^{s-\theta}\psi\|_{L^{6/(5+2\widetilde{s})}}\\
&\lesssim_T \|A\|_{L_T^{\infty}L^{3/(m+1)}}^2\|\mathcal{D}_{A}^{s-\theta}\psi\|_{L^{6/(3-2\theta)}}\\
&\lesssim \|A\|_{L_T^{\infty}H^1}^2\|\mathcal{D}_{A}^{s-\theta}\psi\|_{H^{\theta}}\lesssim  \langle C_A\rangle^n\|\psi\|_{L_T^{\infty}H^s},
\end{split}
\end{equation}
where we used \eqref{eq:disin}, Sobolev embedding and the equivalence of norms \eqref{equicono}.

For the third term, we first observe that
\begin{equation}\label{eq:third_uno}
\|\chi_{\alpha}[\partial_t,\mathcal{D}_A^{s-\theta}]\psi\|_{\ell_{\alpha}^2L_T^2H^{2m-\theta}}\lesssim_T \|[\partial_t,\mathcal{D}_A^{s-\theta}]\psi\|_{L_T^{\infty}H^{2m-\theta}},
\end{equation}
as it follows from the bound \eqref{eq:disin}. Let us prove now the following estimate:
\begin{equation}\label{eq:third_due}
\|[\partial_t,\mathcal{D}_A^{s-\theta}]\psi\|_{L_T^{\infty}H^{2m-\theta}}\lesssim \langle C_A\rangle^n \|\psi\|_{L_T^{\infty}H^{s}}.
\end{equation}
When $s-\theta\leq 0$, \eqref{eq:third_due} can be deduced by applying Lemma \ref{lela} with $s_1=2m-\theta$, $s_2=s$ and $s_3=\theta-s$. When $s-\theta>0$, the bound \eqref{eq:third_due} is instead obtained as follows:
\begin{equation}
\begin{split}
\|[\partial_t,\mathcal{D}_A^{s-\theta}]\psi\|_{L_T^{\infty}H^{2m-\theta}}&= \|\mathcal{D}_A^{s-\theta}[\mathcal{D}_A^{\theta-s},\partial_t]\mathcal{D}_A^{s-\theta}\psi\|_{L_T^{\infty}H^{2m-\theta}}\\
&\lesssim \langle C_A\rangle^n\|[\mathcal{D}_A^{\theta-s},\partial_t]\mathcal{D}_A^{s-\theta}\psi\|_{L_T^{\infty}H^{s+2m-2\theta}}\\
&\lesssim \langle C_A\rangle^n\|\mathcal{D}_A^{s-\theta}\psi\|_{L_T^{\infty}H^{\theta}}\lesssim \langle C_A\rangle^n\|\psi\|_{L_T^{\infty}H^s},
\end{split}
\end{equation}
where we used the identity $[X,Y]=X[Y,X^{-1}]X$ in the first step, the equivalence of norms \eqref{equicono} in the second and last steps, and Lemma \ref{lela} with $s_1=s+2m-2\theta$, $s_2=\theta$ and $s_3=s-\theta$ in the third step. Combining \eqref{eq:third_uno} and \eqref{eq:third_due} we deduce
\begin{equation}\label{eq:third_final}
\|[\partial_t,\mathcal{D}_A^{s-\theta}]\psi\|_{L_T^2H^{2m-\theta}}\lesssim_T  \langle C_A\rangle^n\|\psi\|_{L_T^{\infty}H^{s}}.
\end{equation}

Finally, let us consider the last term in $g_{\alpha}$. We have
\begin{equation}\label{eq:fourth_final}
\begin{split}
\|[\chi_{\alpha},\Delta_A]&\mathcal{D}^{s-\theta}\psi\|_{\ell_{\alpha}^2L_T^2H^{2m-\theta}}\lesssim_T \|\mathcal{D}^{s-\theta}\psi\|_{L_T^{\infty}H^{2m-\theta+1}}\\
&\lesssim \langle C_A\rangle^n\|\psi\|_{L_T^{\infty}H^{s+2m-2\theta+1}}\lesssim \langle C_A\rangle^n\|\psi\|_{L_T^{\infty}H^s},
\end{split}
\end{equation}
where we used \eqref{eq:disin}, the identity $[\chi_{\alpha},\Delta_A]=-2i\nabla\chi_{\alpha}\cdot\nabla_A+\Delta\chi_{\alpha}$, the fact $\operatorname{supp}\{\chi_{\alpha}\}$ overlap finitely, and the equivalence of norms \eqref{equicono}.

Combining \eqref{eq:zeroth_final} together with the estimates \eqref{eq:first_final}, \eqref{eq:second_final}, \eqref{eq:third_final} and \eqref{eq:fourth_final}, and choosing $\varepsilon$ sufficiently small in \eqref{eq:first_final}, we eventually deduce the Koch-Tzvetkov bound \eqref{eq:improved-kt}. The proof is complete.
\end{proof}

We will also need an estimate for the commutator $[\partial_t,\mathcal{D}_A^s]$, which is analogous to the result in \cite[Lemma 2.6]{Wada2012}.

\begin{lemma}\label{lela}
Let us fix $A\in L_T^{\infty}H^1\cap W_T^{1,\infty}L^2$. Fix also $s_1\in(-1,2)$, $s_2\in[0,\frac52)$, and $s_3\in[0,2)$, with $s_1-s_2\in(-\frac52,\frac32)$ and $s_3+s_2-s_1>\frac12$. Then the following estimate holds true:
\begin{equation}\label{eq:combou}
\|[\mathcal{D}_A^{-s_3},\partial_t]f\|_{L_T^{\infty}H^{s_1}}\lesssim\|A\|^n_{L_T^{\infty}H^1\cap W_T^{1,\infty}L^2}\|f\|_{L_T^{\infty}H^{s_2}}.
\end{equation}
\end{lemma}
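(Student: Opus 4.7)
My plan is to prove Lemma \ref{lela} by representing $\mathcal{D}_A^{-s_3}$ through the resolvent of $-\Delta_A$ and then commuting $\partial_t$ with the resolvent. Since the case $s_3=0$ is trivial (the commutator vanishes), I may assume $s_3\in(0,2)$, so that $s_3/2\in(0,1)$ and the Balakrishnan-type formula
\begin{equation*}
\mathcal{D}_A^{-s_3}=(1-\Delta_A)^{-s_3/2}=\frac{\sin(\pi s_3/2)}{\pi}\int_0^\infty\lambda^{-s_3/2}\,R_A(1+\lambda)\,d\lambda
\end{equation*}
holds on $L^2(\R^3)$, using that $-\Delta_A\geq 0$ is self-adjoint.

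Differentiating the resolvent identity $R_A(\mu)(\mu-\Delta_A)=I$ in $t$ yields the formal commutator identity
\begin{equation*}
[\partial_t,R_A(\mu)]=R_A(\mu)(\partial_t\Delta_A)R_A(\mu),
\end{equation*}
where, in the Coulomb gauge, $\partial_t\Delta_A=-2i(\partial_tA)\cdot\nabla-2A\cdot\partial_tA$ is a first-order differential operator with coefficients controlled by $\|A\|_{L^\infty_TH^1\cap W^{1,\infty}_TL^2}$. Integrating against the spectral weight, I obtain the master formula
\begin{equation*}
[\partial_t,\mathcal{D}_A^{-s_3}]f=\frac{\sin(\pi s_3/2)}{\pi}\int_0^\infty\lambda^{-s_3/2}\,R_A(1+\lambda)(\partial_t\Delta_A)R_A(1+\lambda)f\,d\lambda,
\end{equation*}
which reduces the problem to a pointwise-in-$\lambda$ estimate followed by integration.

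To bound the integrand in $H^{s_1}$, I would combine three ingredients: Lemma \ref{lessi} to trade $\langle 1+\lambda\rangle$-decay for up to two magnetic-Sobolev derivatives at each application of $R_A(1+\lambda)$; Lemma \ref{le:quattro} to switch between magnetic and classical Sobolev norms, which is valid only for indices in $[-2,2]$ and hence forces the admissibility constraints $s_2<5/2$, $s_1>-1$, and $s_1-s_2\in(-5/2,3/2)$; and standard 3D bilinear Sobolev product estimates to control the multiplication by $\partial_tA\in L^\infty_TL^2$ and $A\cdot\partial_tA\in L^\infty_TL^{3/2}$ appearing in $\partial_t\Delta_A$.

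The main obstacle is to allocate interpolation parameters $a,b\in[0,1]$ to the two resolvents so that the resulting integrand is pointwise-in-$\lambda$ integrable. Near $\lambda=0$ the weight $\lambda^{-s_3/2}$ is integrable thanks to $s_3<2$. Near $\lambda=\infty$, the one-derivative loss coming from $(\partial_tA)\cdot\nabla$ must be absorbed by the combined decay of the two resolvents together with the factor $\lambda^{-s_3/2}$; this is precisely where the threshold $s_3+s_2-s_1>1/2$ enters, as it is the exact regime in which a valid choice of $(a,b)$ exists and, simultaneously, all intermediate indices fall in the admissibility range of Lemma \ref{le:quattro}. Collecting these bounds and integrating in $\lambda$ then yields \eqref{eq:combou}, with the polynomial dependence $\|A\|^n_{L^\infty_TH^1\cap W^{1,\infty}_TL^2}$ arising from the iterated use of \eqref{equicono}.
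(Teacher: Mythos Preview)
Your proposal is correct and follows essentially the same approach as the paper: represent $\mathcal{D}_A^{-s_3}$ via the Balakrishnan formula, use the resolvent commutator identity $[\partial_t,R_A(\mu)]=R_A(\mu)[\partial_t,-\Delta_A]R_A(\mu)$ with $[\partial_t,-\Delta_A]$ a first-order operator with $\partial_t A$ coefficients, then apply Lemma~\ref{lessi} and the norm equivalence~\eqref{equicono} to obtain a pointwise-in-$\lambda$ bound whose integrability at infinity is exactly the condition $s_3+s_2-s_1>\tfrac12$. The paper packages the commutator as $2i\partial_tA\cdot\nabla_A$ and makes an explicit choice of intermediate index $\widetilde{s}\in(-1,0)$ (playing the role of your interpolation parameters $a,b$), but the argument is the same.
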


\begin{proof}
By functional calculus we can write 
$$\mathcal{D}_A^{-s_3}=\pi^{-1}\sin{(\pi s_3/2)}\int_0^{+\infty}\lambda^{-s_3/2}R(\lambda)d\lambda.$$
Using the relation $[\partial_t,-\Delta_A]=2i\partial_t A\cdot\nabla_A$ we then obtain
\begin{equation}\label{eq:commutator_exp}
[\mathcal{D}_A^{-s_3},\partial_t]f=i\pi^{-1}\sin{(\pi s_3/2)}\int_{0}^{+\infty}\lambda^{-s_3/2}R_A(\lambda)\big(\partial_tA\cdot \nabla_AR_A(\lambda)f\big)d\lambda.
\end{equation}

Owing to the assumptions on $s_1$ and $s_2$, we can find $\widetilde{s}\in(-1,0)$ such that $s_1\in[\widetilde{s},\widetilde{s}+2]$ and $\frac52+\widetilde{s}\in[s_2,s_2+2]$. Hence Lemma \eqref{lessi} and the equivalence of norms \eqref{equicono} yield the fixed-time estimate
\begin{equation}\label{pfiz}
\begin{split}
\|R_A(\lambda)\big(\partial_tA&\cdot \nabla_AR_A(\lambda)f\big)\|_{H^{s_1}}\lesssim \langle\lambda\rangle^{-1+\frac{s_1-\widetilde{s}}{2}}\|A\|_{H^1}^n\|\partial_tA\cdot \nabla_AR_A(\lambda)f\|_{H^{\widetilde{s}}}\\
&\lesssim\langle\lambda\rangle^{-1+\frac{s_1-\widetilde{s}}{2}}\|A\|_{H^1}^n\|\partial_tA\|_{L^2}\|\nabla_AR_A(\lambda)f\|_{H^{3/2+\widetilde{s}}}\\
&\lesssim\langle\lambda\rangle^{-1+\frac{s_1-\widetilde{s}}{2}}\|A\|_{H^1}^n\|\partial_tA\|_{L^2}\|R_A(\lambda)f\|_{H_A^{5/2+\widetilde{s}}}\\
&\lesssim \langle\lambda\rangle^{-\frac34+\frac{s_1-s_2}{2}}\|A\|_{H^1}^n\|\partial_tA\|_{L^2}\|f\|_{H^{s_2}}.
\end{split}
\end{equation}
Combining \eqref{eq:commutator_exp} and \eqref{pfiz} we obtain
$$\|[\mathcal{D}_A^{-s_3},\partial_t]f\|_{H^{s_1}}\lesssim\|A\|_{H^1}^n\|\partial_tA\|_{L^2}\|f\|_{H^{s_2}}\int_0^{\infty}\langle\lambda\rangle^{-\frac34+\frac{s_1-s_2-s_3}{2}}d\lambda.$$
By hypothesis $s_1-s_2-s_3<-1/2$, whence the integral in the r.h.s.~of the estimate above is finite. Then we get the (fixed-time) bound
$$\|[\mathcal{D}_A^{-s_3},\partial_t]f\|_{H^{s_1}}\lesssim\|A\|_{H^1}^n\|\partial_tA\|_{L^2}\|f\|_{H^{s_2}},$$
which in particular implies \eqref{eq:combou}.
\end{proof}

We can prove now our main result.

\begin{proof}[Proof of Theorem \ref{pr:local_smoothing}]
We start by considering the case $s=0$. Let $h$ be a smooth, real valued, increasing function, such $h'(t)=1$ for $|t|\geq\frac12$, $h'(t)=0$ for $|t|\geq 1$, and $\sup_{t\in\R}|h(t)|\leq 1$. For every $\alpha\in\Z^3$ and spatial direction $j\in\{1,2,3\}$, we set $h_{\alpha,j}(x):=h
(x_j-\alpha_j)$, and with a slight abuse of notation we write $h_{\alpha,j}':=\partial_jh_{\alpha,j}$, and analogously for higher derivatives. Let us set moreover $\theta:=1-\delta\in (\frac34,1)$, and define the operator $L_{\alpha,j}:=\mathcal{D}_A^{-\theta}h_{\alpha,j}\partial_{A_j}\mathcal{D}_A^{-\theta}$. Observe that the function $L_{\alpha,j}\psi$ satisfies the equation
\begin{equation}\label{eq:Lj}
i\partial_t L_{\alpha,j}\psi=-\Delta_AL_{\alpha,j}\psi+L_{\alpha,j}F+i[\partial_t,L_{\alpha,j}]\psi+[L_{\alpha,j},-\Delta_A]\psi.
\end{equation}
Moreover, a direct computation yields
\begin{equation}\label{eq:comm_Lj}
[L_{\alpha,j},-\Delta_A]=\mathcal{D}_A^{-\theta}\Big(ih_{\alpha,j}\Theta_j+2\partial_{A_j}h_{\alpha,j}'\partial_{A_j}-h_{\alpha,j}''\partial_{A_j}\Big)\mathcal{D}_A^{-\theta},
\end{equation}
where we set 
\begin{equation}\label{def_theta}
\Theta_j:=\sum_{k=1}^3(\partial_jA_k-\partial_kA_j)\partial_{A_k}+\partial_{A_k}(\partial_jA_k-\partial_kA_j).
\end{equation}
Combining \eqref{eq:Lj} and \eqref{eq:comm_Lj}, and using the self-adjointness of $\mathcal{D}_A^{-\theta}$, we obtain 
\begin{equation*}
\begin{split}
i\partial_t(L_{\alpha,j}\psi,\psi)&=-\big(\Delta_AL_{\alpha,j}\psi-L_{\alpha,j}F-i[\partial_t,L_{\alpha,j}]\psi\\
&\quad\qquad-[L_{\alpha,j},-\Delta_A]\psi,\psi\big)+i(L_{\alpha,j}\psi,\partial_t \psi)\\
&=-(L_{\alpha,j}\psi,i\partial_t\psi+\Delta_A\psi)+(L_{\alpha,j}F,\psi)+i([\partial_t,L_{\alpha,j}]\psi,\psi)\\
&\quad\qquad+(i\mathcal{D}_A^{-\theta}h_{\alpha,j}\Theta_j\mathcal{D}_A^{-\theta}\psi,\psi)+2(\mathcal{D}_A^{-\theta}\partial_{A_j}h_{\alpha,j}'\partial_{A_j}\mathcal{D}_A^{-\theta}\psi,\psi)\\
&\quad\qquad-(\mathcal{D}_A^{-\theta}h_{\alpha,j}''\partial_{A_j}\mathcal{D}_A^{-\theta}\psi,\psi)\\
&=(L_{\alpha,j}\psi,F)+(L_{\alpha,j} F,\psi)+i([\partial_t,L_{\alpha,j}]\psi,\psi)+(h_{\alpha,j}\Theta_j\mathcal{D}_A^{-\theta}\psi,i\mathcal{D}_A^{-\theta}\psi)\\
&\quad\qquad -2(h_{\alpha,j}'\partial_{A_j}\mathcal{D}_A^{-\theta}\psi,\partial_{A_j}\mathcal{D}_A^{-\theta}\psi)-(\mathcal{D}_A^{-\theta}h_{\alpha,j}''\partial_{A_j}\mathcal{D}_A^{-\theta}\psi,\psi).
\end{split}
\end{equation*}
Integrating the above identity over $[0,T]$, observing that $h_{\alpha,j}'\equiv 1$ on $Q_{\alpha}$, and taking the sup over $\alpha\in\Z^3$ we deduce the bound
\begin{equation}\label{sanv}
\begin{split}
2\|\chi_{\alpha}\partial_{A_j}&\mathcal{D}_A^{-\theta}\psi\|_{\ell_{\alpha}^{\infty}L_T^2L^2(Q_{\alpha})}^2\leq\|(L_{\alpha,j}\psi,\psi)\|_{L_{\alpha,T}^{\infty}}\\
&+\|(L_{\alpha,j}\psi,F)+(L_{\alpha,j} F,\psi)\|_{\ell_{\alpha}^{\infty}L_T^1}\\
&+\|([\partial_t,L_{\alpha,j}]\psi,\psi)\|_{\ell_{\alpha}^{\infty}L_T^1}+\|(\Theta_j\mathcal{D}_A^{-\theta}\psi,\mathcal{D}_A^{-\theta}\psi)\|_{L_T^1}\\
&+\|(\mathcal{D}_A^{-\theta}h_{\alpha,j}''\partial_{A_j}\mathcal{D}_A^{-\theta}\psi,\psi)\|_{\ell_{\alpha}^{\infty}L_T^1}:=\mbox{I+II+III+IV+V}.
\end{split}
\end{equation}
Owing to the definition of $L_{\alpha,j}$, Cauchy Schwartz in the space variables and the equivalence of norms \eqref{equicono} we obtain
\begin{equation}\label{eq:125}
\mbox{I+II+V}\lesssim_T \langle\|A\|_{L_T^{\infty}H^1}\rangle^n\big(\|\psi\|_{L_T^{\infty}L^2}^2+\|F\|_{L_T^2H^{1-2\theta}}^2\big).
\end{equation}

Next, let us estimate term III. We start by computing
\begin{equation}\label{tuaq}
\begin{split}
[\partial_t,L_{\alpha,j}]&=[\partial_t,\mathcal{D}_A^{-\theta}]h_{\alpha,j}\partial_{A_j}\mathcal{D}_A^{-\theta}\\
&\quad+i\mathcal{D}_A^{-\theta}h_{\alpha,j}(\partial_tA_j)\mathcal{D}_A^{-\theta}+\mathcal{D}_A^{-\theta}h_{\alpha,j}\partial_{A_j}[\partial_t,\mathcal{D}_A^{-\theta}].
\end{split}
\end{equation}
For the first term in the right hand side of \eqref{tuaq} we have the estimate
\begin{equation}\label{comm_31}
\begin{split}
\|[\partial_t,\mathcal{D}_A^{-\theta}]h_{\alpha,j}\partial_{A_j}\mathcal{D}_A^{-\theta}\psi\|_{\ell_{\alpha}^{\infty}L_T^{\infty}L^2}&\lesssim \langle\|A\|_{\mathcal{X}_T}\rangle^n\|\partial_{A_j}\mathcal{D}_A^{-\theta}\psi\|_{L_T^{\infty}H^{\theta-1}}\\
&\lesssim \langle\|A\|_{\mathcal{X}_T}\rangle^n\|\psi\|_{L_T^{\infty}L^2},
\end{split}
\end{equation}
where we used Lemma \ref{lela} with $s_1=0$, $s_2=\theta-1$, $s_3=\theta$ and the equivalence of norms \eqref{equicono}. The third term in the r.h.s. of \eqref{tuaq} is treated similarly:
 \begin{equation}\label{comm_33}
\begin{split}
\|\mathcal{D}_A^{-\theta}h_{\alpha,j}\partial_{A_j}[\partial_t,\mathcal{D}_A^{-\theta}]\psi\|_{\ell_{\alpha}^{\infty}L_T^{\infty}L^2}&\lesssim \langle\|A\|_{\mathcal{X}_T}\rangle^n\|[\partial_t,\mathcal{D}_A^{-\theta}]\psi\|_{L_T^{\infty}H^{1-\theta}}\\
&\lesssim \langle\|A\|_{\mathcal{X}_T}\rangle^n\|\psi\|_{L_T^{\infty}L^2},
\end{split}
\end{equation}
where we used the bound \eqref{equicono} and Lemma \ref{lela} with $s_1=1-\theta$, $s_2=0$, $s_3=\theta$. For the second term in the r.h.s. of \eqref{tuaq} we have
\begin{equation}\label{comm_32}
\begin{split}
\|\mathcal{D}_A^{-\theta}h_{\alpha,j}&(\partial_tA_j)\mathcal{D}_A^{-\theta}\psi\|_{\ell_{\alpha}^{\infty}L_T^{\infty}L^2}\lesssim \langle\|A\|_{L_T^{\infty}H^1}\rangle^m\|(\partial_tA_j)\mathcal{D}_A^{-\theta}\psi\|_{L_T^{\infty}L^{4/3}}\\
&\lesssim \langle\|A\|_{L_T^{\infty}H^1}\rangle^m\|\partial_t A_j\|_{L_T^{\infty}L^2}\|\mathcal{D}_A^{-\theta}\psi\|_{L_T^{\infty}H^{\theta}}\lesssim \langle\|A\|_{\mathcal{X}_T}\rangle^n\|\psi\|_{L_T^{\infty}L^2},
\end{split}
\end{equation}
as it follows from \eqref{equicono} and the Sobolev embedding $H^{\theta}\hookrightarrow L^4$, valid as $\theta>\frac34$. Combining identity \eqref{tuaq} with the bounds \eqref{comm_31}, \eqref{comm_33} and \eqref{comm_32} we deduce
\begin{equation}\label{IIIc}
\|[\partial_t,L_{\alpha,j}]\psi\|_{\ell_{\alpha}^{\infty}L_T^{\infty}L^2}\lesssim \langle\|A\|_{\mathcal{X}_T}\rangle^n \|\psi\|_{L_T^{\infty}L^2}.
\end{equation}
Using \eqref{IIIc} and Cauchy-Schwartz in the space variables we eventually obtain
\begin{equation}\label{eq:III}
III\lesssim_T \langle\|A\|_{\mathcal{X}_T}\rangle^n\|\psi\|_{L_T^{\infty}L^2}^2.
\end{equation}
We are left to estimate term IV. To this aim, let us set $p:=p(\sigma)=6/(5-2\sigma)$, $q:=q(\sigma)=3/(\sigma-1)$, and observe that
\begin{equation}\label{IV_prel}
\begin{split}
IV&\leq 2\|\chi_{\alpha}\mathcal{D}_A^{-\theta}\psi\sum_{k=1}^3(\partial_jA_k-\partial_kA_j)\partial_{A_k}\mathcal{D}_A^{-\theta}\psi\|_{\ell_{\alpha}^1L_T^1L^1}\\
&\leq \frac12\|\chi_{\alpha}\nabla_{A}\mathcal{D}_A^{-\theta}\psi\|_{\ell_{\alpha}^{\infty}L_T^2L_x^2}^2+2\|\chi_{\alpha}\operatorname{rot}A\|_{\ell_{\alpha}^2L_T^{\infty}L^p}\|\chi_{\alpha}\mathcal{D}_A^{-\theta}\psi\|_{\ell_{\alpha}^2L_T^2L^{q}}^2\\
&\leq \frac12\|\chi_{\alpha}\nabla_{A}\mathcal{D}_A^{-\theta}\psi\|_{\ell_{\alpha}^{\infty}L_T^2L_x^2}^2+2\|A\|_{\mathcal{X}_T}\|\chi_{\alpha}\mathcal{D}_A^{-\theta}\psi\|_{\ell_{\alpha}^2L_T^2W^{\frac32-\sigma,6}}^2,
\end{split}
\end{equation}
where we used the definition \eqref{def_theta} of $\Theta_j$, an integration by parts and the bound $\|f\|_{L^1}\leq\|\chi_{\alpha}f\|_{\ell_{\alpha}^1L^1}$ in the first step, H\"older inequality in $\alpha,t,x$ together with Young inequality in the second step, and Sobolev embedding in the last step. Owing to the assumptions on $\delta$ and $\sigma$, we have $\frac32-\sigma\in\left(\frac{\theta-1}{2},\frac{2\theta-1}{2}\right)$. Hence, applying the bound \eqref{eq:improved-kt} with $s=0$ and $m=\frac32-\sigma$ we get
\begin{equation}\label{IV_inter}
\|\chi_{\alpha}\mathcal{D}_A^{-\theta}\psi\|_{\ell_{\alpha}^2L_T^2W^{3/2-\sigma,6}}\lesssim \langle\|A\|_{\mathcal{X}_T}\rangle^n\big(\|\psi\|_{L_T^{\infty}L^2}+\|F\|_{L_T^2H^{2\delta-2\sigma+1}}\big),
\end{equation}
which combined with \eqref{IV_prel} yields
\begin{equation}\label{IV_def}
IV-\frac12\|\chi_{\alpha}\nabla_{A}\mathcal{D}_A^{-\theta}\psi\|_{\ell_{\alpha}^{\infty}L_T^2L_x^2}^2\lesssim_T \langle\|A\|_{\mathcal{X}_T}\rangle^n\big(\|\psi\|_{L_T^{\infty}L^2}+\|F\|_{L_T^2H^{2\delta-2\sigma+1}}\big).
\end{equation}
Combining estimates \eqref{sanv}, \eqref{eq:125}, \eqref{eq:III} and \eqref{IV_def}, and summing over $j$, we eventually obtain
\begin{equation}\label{eq:smooth_mass}
\|\chi_{\alpha}\nabla_A\mathcal{D}_A^{-\theta}\psi\|_{\ell_{\alpha}^{\infty}L_T^2L^2_x}\lesssim_T\langle\|A\|_{\mathcal{X}_T}\rangle^n\Big(\|\psi\|_{L_T^{\infty}L^2}+\|F\|_{L_T^2H^{-1+2\delta}}\Big).
\end{equation}

Let us consider now the generic case $s\in[0,2)$. Observe that $\mathcal{D}^s_A\psi$ satisfies the equation
$$i\partial_t(\mathcal{D}^s_A\psi)=-\Delta_A(\mathcal{D}^s_A\psi)+i[\partial_t,\mathcal{D}_A^s]\psi+\mathcal{D}^s_A F.$$
Using estimate \eqref{eq:smooth_mass} we obtain
\begin{equation}\label{eq:smooth_intermass}
\begin{split}
\|\chi_{\alpha}\nabla_A\mathcal{D}_A^{s-\theta}&\psi\|_{\ell_{\alpha}^{\infty}L_T^2L^2_x}\lesssim_T\langle\|A\|_{\mathcal{X}_T}\rangle^n\Big(\|\mathcal{D}^s_A\psi\|_{L_T^{\infty}L^2}+\\
&\|[\partial_t,\mathcal{D}_A^s]\psi\|_{L_T^2H^{-1+2\delta}}+\|\mathcal{D}^s_A F\|_{L_T^2H^{-1+2\delta}}\Big).
\end{split}
\end{equation}
For the commutator term we have the estimate
\begin{equation}\label{final_commu}
\begin{split}
\|[\partial_t,\mathcal{D}_A^s]\psi\|_{L_T^2H^{-1+2\delta}}&=\|\mathcal{D}_A^s[\partial_t,\mathcal{D}_A^{-s}]\mathcal{D}_A^s\psi\|_{L_T^2H^{-1+2\delta}}\\
&\lesssim_T \langle\|A\|_{\mathcal{X}_T}\rangle^n\|[\partial_t,\mathcal{D}_A^{-s}]\mathcal{D}_A^s\psi\|_{L_T^{\infty}H^{s-1+2\delta}}\\
&\lesssim \langle\|A\|_{\mathcal{X}_T}\rangle^n\|\mathcal{D}_A^s\psi\|_{L^2}\lesssim \langle\|A\|_{\mathcal{X}_T}\rangle^n\|\psi\|_{L_T^{\infty}H^s},
\end{split}
\end{equation}
where we used the identity $[X,Y]=X[Y,X^{-1}]X$ in the first step, the equivalence of norms \eqref{equicono} in the second and last steps, and Lemma \ref{lela} with $s_1=s-1+2\delta$, $s_2=0$ and $s_3=s$ in the third step. Combining \eqref{eq:smooth_intermass} and \eqref{final_commu}, and using the equivalence of norms \eqref{equicono} we deduce
\begin{equation}\label{eq:smooth_mass_s}
\|\chi_{\alpha}\nabla_A\mathcal{D}_A^{s-\theta}\psi\|_{\ell_{\alpha}^{\infty}L_T^2L^2_x}\lesssim_T\langle\|A\|_{\mathcal{X}_T}\rangle^n\Big(\|\psi\|_{L_T^{\infty}H^s}+\|F\|_{L_T^2H^{s-1+2\delta}}\Big).
\end{equation}
Next we observe that, in view of the bounds \eqref{equicono} and \eqref{eq:disin} we have
\begin{equation}\label{comparison}
\begin{split}
\|\chi_{\alpha}\psi\|_{\ell_{\alpha}^{\infty}L_T^2H^{s+\delta}}&\lesssim \|\chi_{\alpha}\nabla\psi\|_{\ell_{\alpha}^{\infty}L_T^2H^{s-\theta}}\\
&\lesssim \langle\|A\|_{L_T^{\infty}H^1}\rangle^n \|\chi_{\alpha}\nabla_A\mathcal{D}_A^{s-\theta}\psi\|_{\ell_{\alpha}^{\infty}L_T^2L^2_x} + \|A\psi\|_{L_T^2H^{s-\theta}}.
\end{split}
\end{equation}
Moreover, owing to the assumption $\delta<2-s$, we also have
\begin{equation}\label{eq:remainder}
\|A\psi\|_{L_T^2H^{s-\theta}}\lesssim_T \langle\|A\|_{L_T^{\infty}H^1}\rangle\|\psi\|_{L_T^{\infty}H^s},
\end{equation}
as it follows from the fractional Leibniz rule and Sobolev embedding. Combining \eqref{comparison} together with \eqref{eq:smooth_mass_s} and \eqref{eq:remainder} we eventually deduce the smoothing estimate \eqref{eq:local_smoothing} for $s\in(0,2)$. The proof is complete.
\end{proof}

\subsection{Local smoothing for the Maxwell-Schr\"odinger system}
We apply now the local smoothing estimate \eqref{eq:local_smoothing} to the case of the non-linear Maxwell-Schr\"odinger system. Preliminary, we need a refined version of the Strichartz estimate \ref{eq:kg_stri} for the wave equation, analogous to \cite[Lemma 2.2]{Wada2012}, which exploits the finite speed of propagation for hyperbolic equations.

\begin{lemma}\label{le:uno_smoothing}
	Let $T>0$, $\sigma\geq 1$, and let $(q_0,r_0)$ be a wave-admissible pair. Fix moreover $(A_0,A_1)\in\Sigma^{\sigma}$ and $F\in L_{T}^{q'_0}W^{\sigma-1+2/q_0,r'_0}$, and let $A$ be the solution to $\square A=F$ with initial data $A(0)=A_0$, $\partial_tA(0)=A_1$. Then for every wave-admissible pair $(q,r)$, we have the estimate
	\begin{equation}\label{eq:kg_stri_smooth}
		\max_{k=0,1}\|\chi_{\alpha}\partial_t^k A\|_{\ell_{\alpha}^2L_T^{q}W^{\sigma-k-2/q,r}}\lesssim\|(A_0,A_1)\|_{\Sigma^{\sigma}}+\|F\|_{L_T^{q'_0}W^{\sigma+2/q_0-1,r'_0}}.
	\end{equation}
\end{lemma}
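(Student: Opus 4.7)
The plan is to combine a partition-of-unity decomposition on the spatial side with the finite speed of propagation for the wave equation, and then apply the standard Strichartz estimate \eqref{eq:kg_stri} to each localized piece.

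I would first fix a smooth partition of unity $\{\eta_\beta\}_{\beta \in \Z^3}$ on $\R^3$ with $\eta_\beta(x) = \eta_0(x - \beta)$, $\operatorname{supp}\eta_0 \subset B(0, 1)$, $\sum_\beta \eta_\beta \equiv 1$, and uniformly finite overlap, and decompose $A = \sum_\beta A_\beta$, where each $A_\beta$ solves $\square A_\beta = \eta_\beta F$ with initial data $(A_\beta, \partial_t A_\beta)|_{t=0} = (\eta_\beta A_0, \eta_\beta A_1)$. By finite speed of propagation for the scalar wave equation (applied componentwise; the divergence-free condition does not enter here), both $\operatorname{supp} A_\beta(t, \cdot)$ and $\operatorname{supp}\partial_t A_\beta(t, \cdot)$ are contained in $B(\beta, C_0 + t)$ for $t \geq 0$. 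In particular, since $\operatorname{supp}\chi_\alpha \subset 2Q_\alpha$, for $t \in [0, T]$ and $\alpha \in \Z^3$ the function $\chi_\alpha \partial_t^k A_\beta(t, \cdot)$ vanishes identically unless $|\alpha - \beta| \leq N_T$ for some $N_T = O(1+T)$.

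Write $X_k := L^q_T W^{\sigma - k - 2/q, r}$ and $Y := L^{q_0'}_T W^{\sigma + 2/q_0 - 1, r_0'}$. By the triangle inequality and the boundedness of multiplication by the smooth cutoff $\chi_\alpha$ on $W^{s,r}$, I obtain $\|\chi_\alpha \partial_t^k A\|_{X_k} \lesssim \sum_{|\beta - \alpha| \leq N_T} \|\partial_t^k A_\beta\|_{X_k}$ for each $\alpha$. Taking the $\ell^2_\alpha$ norm and invoking Young's inequality for discrete convolutions on $\Z^3$ against the sequence $\mathbf{1}_{\{|\cdot| \leq N_T\}} \in \ell^1$ gives $\|\chi_\alpha \partial_t^k A\|_{\ell^2_\alpha X_k} \lesssim_T \|\partial_t^k A_\beta\|_{\ell^2_\beta X_k}$. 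Applying the standard Strichartz bound \eqref{eq:kg_stri} to each $A_\beta$ and summing in $\ell^2_\beta$ reduces the problem to
$$\|\partial_t^k A_\beta\|_{\ell^2_\beta X_k} \lesssim_T \|\eta_\beta (A_0, A_1)\|_{\ell^2_\beta \Sigma^\sigma} + \|\eta_\beta F\|_{\ell^2_\beta Y}.$$
The initial-data piece collapses to $\|(A_0, A_1)\|_{\Sigma^\sigma}$ by the analogue of \eqref{eq:disin} at $r = 2$, equivalently by the finite overlap of $\{\eta_\beta\}$.

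The only genuinely delicate step is the source-term estimate $\|\eta_\beta F\|_{\ell^2_\beta Y}$. Since $(q_0, r_0)$ is wave-admissible, $q_0', r_0' \in [1, 2]$. Minkowski's integral inequality, valid because $q_0' \leq 2$, permits the swap of $\ell^2_\beta$ with $L^{q_0'}_T$; the discrete embedding $\ell^{r_0'} \hookrightarrow \ell^2$ (valid because $r_0' \leq 2$) together with the analogue of \eqref{eq:disin} at $r = r_0'$ for the partition $\{\eta_\beta\}$ then produces
$$\|\eta_\beta F\|_{\ell^2_\beta Y} \leq \|\eta_\beta F\|_{L^{q_0'}_T \ell^2_\beta W^{\sigma + 2/q_0 - 1, r_0'}} \leq \|\eta_\beta F\|_{L^{q_0'}_T \ell^{r_0'}_\beta W^{\sigma + 2/q_0 - 1, r_0'}} \lesssim \|F\|_Y.$$
Chaining these bounds yields the desired estimate for both $k = 0, 1$. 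The conceptual heart of the argument is that finite speed of propagation decouples the $\ell^2_\alpha$ summation from any single wave evolution; the main technical subtlety is the Minkowski/embedding chain that transfers the $\ell^2_\beta$-summability of $\{\eta_\beta F\}$ into the unlocalized source norm $\|F\|_Y$.
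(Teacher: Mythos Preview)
Your proof is correct and rests on the same core ingredients as the paper's: finite speed of propagation for the wave equation, the standard Strichartz estimate \eqref{eq:kg_stri} on localized pieces, and the final chain $\ell^2 L^{q_0'}_T \leq L^{q_0'}_T \ell^2 \leq L^{q_0'}_T \ell^{r_0'}$ together with \eqref{eq:disin} to control the source term. The difference is in the direction of localization. You decompose the \emph{input}, writing $A=\sum_\beta A_\beta$ with $(\eta_\beta A_0,\eta_\beta A_1,\eta_\beta F)$ as data, so that each $A_\beta$ has compact spatial support for $t\in[0,T]$; the $\ell^2_\alpha$ sum over observation cubes then reduces, via Young's convolution inequality, to an $\ell^2_\beta$ sum over the pieces. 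The paper instead localizes the \emph{output}: for each $\alpha$ it builds a single auxiliary solution $\widetilde A_{\alpha,T}$ with data and source cut off by a function $\widetilde\chi_{\alpha,T}$ supported in the $(T{+}1)$-neighborhood of $\operatorname{supp}\chi_\alpha$; finite speed of propagation gives $A=\widetilde A_{\alpha,T}$ on $(0,T)\times\operatorname{supp}\chi_\alpha$, so Strichartz applied to $\widetilde A_{\alpha,T}$ bounds $\|\chi_\alpha\partial_t^k A\|$ directly, and the $\ell^2_\alpha$ sum follows from the $O(\langle T\rangle^3)$-overlap of the enlarged cutoffs. The two arguments are dual and equally efficient; yours is slightly more modular, while the paper's avoids the discrete Young step.
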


\begin{proof}
	We recall that, by finite speed of propagation (here the speed of light is normalized to $c=1$), the quantity $A(t,x)$ is determined by the values of $A_0,A_1$ and $F(t)$ in the ball $B_x(t)$ of center $x$ and radius $t$. For every fixed $\alpha\in\Z^3$, let $\widetilde{\chi}_{\alpha,T}:\R^3\to [0,1]$ be a smooth function such that
	$$
	\widetilde{\chi}_{\alpha,T}(x)=\begin{cases}
		1&\mbox{if }\operatorname{dist}(x,\operatorname{supp}\chi_{\alpha})\leq T\\
		0&\mbox{if }\operatorname{dist}(x,\operatorname{supp}\chi_{\alpha})\geq T+1,
	\end{cases}$$
	and consider the Cauchy problem
	\begin{equation}\label{eq:acut}
		\square\widetilde{A}_{\alpha,T}=\widetilde{\chi}_{\alpha,T}F,\quad (\widetilde{A}_{\alpha,T},\partial_t \widetilde{A}_{\alpha,T})(0)=\widetilde{\chi}_{\alpha,T}(A_0,A_1).
	\end{equation}
	In view of the observation above and the definition of the cut-off $\widetilde{\chi}_{\alpha,T}$, we have that $A(t,x)=\widetilde{A}_{\alpha,T}(t,x)$ for every $(t,x)\in (0,T)\times\operatorname{supp}\chi_{\alpha}$. In particular, applying the standard Strichartz estimates \eqref{eq:kg_stri} to the Cauchy problem \eqref{eq:acut} we deduce
	\begin{equation}\label{eq:blf}
		\max_{k=0,1}\|\chi_{\alpha}\partial_t^k A\|_{L_T^{q}W^{\sigma-k-2/q,r}}^2\lesssim \|\widetilde{\chi}_{\alpha,T}(A_0,A_1)\|_{\Sigma^\sigma}+\|\widetilde{\chi}_{\alpha,T}F\|_{L_T^{q'_0}W^{\sigma+2/q_0-1,r'_0}}.
	\end{equation}
	Observe moreover that, for any given $\alpha\in\Z^3$, the number of $\widetilde{\alpha}\in\Z^3$ such that $\operatorname{supp}\chi_{\widetilde{\alpha}}\cap \operatorname{supp}\widetilde{\chi}_{\alpha,T}\neq\emptyset$ is bounded by $C\langle T\rangle^3$, for some constant $C$ uniform in $\alpha$. Hence, summing the bound \eqref{eq:blf} over $\alpha\in\Z^3$ we obtain
	\begin{equation*}
		\begin{split}
			\max_{k=0,1}\|\chi_{\alpha}\partial_t^k A\|_{\ell_{\alpha}^2L_T^{q}W^{\sigma-k-2/q,r}}&\lesssim\|\widetilde{\chi}_{\alpha,T}(A_0,A_1)\|_{\ell_{\alpha}^2\Sigma^\sigma}+\|\widetilde{\chi}_{\alpha,T}F\|_{\ell_{\alpha}^2L_T^{q'_0}W^{\sigma+2/q_0-1,r'_0}}\\
			&\lesssim_T\|\chi_{\alpha}(A_0,A_1)\|_{\ell_{\alpha}^2\Sigma^\sigma}+\|\chi_{\alpha}F\|_{\ell_{\alpha}^2L_T^{q'_0}W^{\sigma+2/q_0-1,r'_0}}\\
			&\lesssim \|(A_0,A_1)\|_{\Sigma^\sigma}+\|F\|_{L_T^{q'_0}W^{\sigma+2/q_0-1,r'_0}},
		\end{split}
	\end{equation*}
	where in the last step we used Minkowski inequality to interchange the order of variables in the $\ell_{\alpha}^2L_T^{q'_0}$-norm, the embedding $\ell_{\alpha}^{r_0'}\hookrightarrow \ell_{\alpha}^2$, and the bound \eqref{eq:disin}. The proof is complete.
\end{proof}

We are now ready to prove the main result of this section.

\begin{proof}[Proof of Proposition \ref{pr:smoot_MS}]
Applying the refined Strichartz estimates \eqref{eq:kg_stri_smooth} to the equation $\square A=\mathbb{P}J$, we obtain the bound 
	\begin{equation}\label{eq:3.9AMS}
		\|\chi_{\alpha}A\|_{\ell_{\alpha}^2L_T^{q}W^{\sigma-2/q,r}}\lesssim\|(A_0,A_1)\|_{\Sigma^{\sigma}}+\|\mathbb{P}J\|_{L_T^{6/5}W^{\sigma-2/3,3/2}},
	\end{equation}
valid for every wave-admissible pair $(q,r)$. Moreover, using Lemma \ref{le:tre}, the fractional Leibniz rule \eqref{frac_leibniz}, and observing that $\sigma-2/3\leq 1/2$, we get
	\begin{equation}\label{dabc}
		\begin{split}
			\|\mathbb{P}J\|_{L_T^{6/5}W^{\sigma-2/3,3/2}}&\lesssim_{\langle T\rangle^n}\|\mathbb{P}(\bar{\psi}\nabla \psi)\|_{L_T^2W^{\sigma-2/3,3/2}} + \|A|\psi|^2\|_{L_T^{\infty}W^{\sigma-2/3,3/2}}\\
			&\lesssim \|\psi\|_{L_T^2W^{\sigma-2/3,6}}\|\nabla \psi\|_{L_T^{\infty}L^2}+\|A\|_{L_T^{\infty}W^{1/2,3}}\|\psi^2\|_{L_T^{\infty}L^3}\\
			&\quad+\|A\|_{L_T^{\infty}L^6}\|\psi\|_{L_T^{\infty}W^{1/2,3}}\|\psi\|_{L_T^{\infty}L^6}\\
			&\lesssim (\|\psi\|_{L_T^2W^{1/2,6}}+\|A\|_{L_T^{\infty}H^1})\langle\|\psi\|_{L_T^{\infty}H^1}^2\rangle.
		\end{split}
	\end{equation}
Combining \eqref{eq:3.9AMS}, \eqref{dabc} and the a priori estimate \eqref{eq:apriori_uno} for $\|\psi\|_{L_T^2W^{1/2,6}}$ we deduce that for every wave-adissible pair $(q,r)$ we have the bound
	\begin{equation}\label{uhgt}
		\|\chi_{\alpha}A\|_{\ell_{\alpha}^2L_T^{q}W^{\sigma-2/q,r}} \lesssim_T \langle\|(\psi,A)\|_{L_T^{\infty}(H^1\times H^1)}\rangle^n\langle\|(A_0,A_1)\|_{\Sigma^{\sigma}}\rangle^n.
	\end{equation}
In particular, in view of \eqref{uhgt} and the definition \eqref{def:xt} of the $\mathcal{X}_T$-norm, we get
	$$\|A\|_{\mathcal{X}_T}\lesssim_T \langle\|(\psi,A)\|_{L_T^{\infty}(H^1\times H^1)}\rangle^n\langle\|(A_0,A_1)\|_{\Sigma^{\sigma}}\rangle^n.$$
Using the inequality above, Proposition \ref{pr:local_smoothing} and the smoothing estimate \eqref{sm-du-st} for the inhomogeneous Schr\"odinger equation, we obtain the for every $\delta\in(0,\sigma-1)$
	\begin{equation}\label{eteuno}
		\begin{split}
			\|\chi_{\alpha}\psi\|_{\ell_{\alpha}^{\infty}L_T^{2}H^{1+\delta}} & \lesssim_T \langle\|(\psi,A)\|_{L_T^{\infty}(H^1\times H^1)}\rangle^n\langle\|(A_0,A_1)\|_{\Sigma^{\sigma}}\rangle^n\\
			&\quad\times\|\phi\psi\|_{L_T^2H^{2\delta}}+\||\psi|^{2(\gamma-1)}\psi\|_{L_T^2W^{1/2+\delta,6/5}}.
		\end{split}
	\end{equation}
 Since $2\delta<2(\sigma-1)<\frac13$, we deduce from \eqref{esti_conv} that
\begin{equation}\label{etedue}
\|\phi\psi\|_{L_T^2H^{2\delta}}\lesssim_T \|\psi\|_{L_T^{\infty}H^1}^3.
\end{equation}
 For the pure-power term, we first observe that we have the embedding 
$$H^1\cap W^{1/2,6}\hookrightarrow W^{1/2+(3-\gamma)/2,6/(7-2\gamma)\vee 2}.$$
Since $\delta<\frac{3-\gamma}{2}$, Lemma \ref{esti_pure}, the embedding above and the estimate \eqref{eq:apriori_uno} yield
\begin{equation}\label{etetre}
	\begin{split}
\||\psi|^{2(\gamma-1)}\psi\|_{L_T^2W^{1/2+\delta,6/5}}&\lesssim_T \|\psi\|^{2(\gamma-1)}_{L_T^{\infty}L^{6(\gamma-1)\wedge 6}}\|\psi\|_{L_T^2W^{1/2+(3-\gamma)/2, 6/(7-2\gamma)\vee 2}}\\
 & \lesssim_T \langle\|(\psi,A)\|_{L_T^{\infty}(H^1\times H^1)}\rangle^n\langle\|(A_0,A_1)\|_{\Sigma^{\sigma}}\rangle^n.
 \end{split}
\end{equation}
 
 Combining \eqref{eteuno},\eqref{etedue} and \eqref{etetre} we eventually deduce the local smoothing estimate \eqref{eq:losmoMS}.
\end{proof}

\section{Proof of the main results}\label{sec:fin}
This section is devoted to the proof of our main results, namely Theorem \ref{th:main} and Theorem \ref{th:stability}. Let us start with the existence of global, finite energy, weak solutions to the QMHD system.
\begin{proof}[Proof of Theorem \ref{th:main}]
In view of Proposition \ref{pr:exMS}, there exists a global, weak $M^{1,1}$-solution $(\psi,A)$ to the Maxwell-Schr\"odinger system \eqref{eq:MS}, with initial data $(\psi_0,A_0,A_1)$, satisfying the uniform energy bound $\sup_{t\in\R^+}\mathcal{E}(t)\leq\mathcal{E}(0)$, and such that for every $T>0$
\begin{equation}\label{eq:unip}
	\|(\psi,A,\partial_t A)\|_{L_T^{\infty}M^{1,1}}\lesssim_T \|(\psi_0,A_0,A_1)\|_{M^{1,1}}.
\end{equation}
Associated to $(\psi,A)$, we consider the hydrodynamic variables $(\rho,J,E,B)$ defined by \eqref{eq:mad} and \eqref{eq:em_f}. We are going to show that $(\rho,J,E,B)$ is a global, finite energy, weak solution to the Cauchy problem \eqref{eq:qmhd}-\eqref{eq:qmhd_id}, in the sense of Definition \ref{def:fews}.

\begin{itemize}
\item[(i)] Using the relation $\nabla\sqrt{\rho}=\RE(\bar{\varphi}\nabla\psi)$, with $\varphi\in P(\psi)$, we obtain
\begin{equation}\label{eq:ii}
	\|\sqrt{\rho}\|_{L_T^2H^1}\lesssim_T\|\psi\|_{L_T^{\infty}H^1}\lesssim_T 1.
\end{equation}
As discussed in Subsection 2.1, the relation $J=\sqrt{\rho}\Lambda$ is a direct consequence of the definition $\Lambda:=\IM(\bar{\varphi}\nabla_A\psi)$ of the hydrodynamic state $\Lambda$. Using in addition the equivalence of norms \eqref{equicono} we get
\begin{equation}\label{eq:Aii}
	\|\Lambda\|_{L^{2}_TL^2}\lesssim_T\|\psi\|_{L_T^{\infty}H_A^1}\lesssim \langle\|A\|_{L_T^{\infty}H^1}\rangle\|\psi\|_{L_T^{\infty}H^1}\lesssim_T 1.
\end{equation}
\item[(ii)] Using the expression $(E,B)=(-\nabla\phi-\partial_t A,\nabla\wedge A)$ for the electromagnetic field and the bound $\|\phi(\psi)\|_{H^1}\lesssim \|\psi\|^2_{H^1}$ we deduce
\begin{equation}\label{iEB}
	\begin{split}
\|(E,B)\|_{L_T^2(L^2\times L^2)}&\lesssim_T \|\nabla\phi+\partial_t A\|_{L_T^{\infty}L^2}+\|\nabla\wedge A\|_{L_T^{\infty}L^2}\\
&\lesssim \|\psi\|_{L_T^{\infty}H^1}^2+\|A\|_{L_T^{\infty}\Sigma^1}\lesssim_T 1.
\end{split}
\end{equation}
\item[(iii)] The condition $F_L\in L^1_{\mathrm{loc}}(\R_t^+\times\R_x^3)$ is guaranteed by Proposition \ref{pr:wdlf}.
\item[(iv)] The weak formulation \eqref{eq:continuity} of the continuity equation has been proved in Proposition \ref{pr:continuity}.
\item[(v)] The weak formulation \eqref{eq:momentum} of the momentum equation has been proved in Proposition \ref{pr:momentum}.
\item[(vi)] The distributional validity of the Maxwell equations \eqref{eq:Max} follows by a direct computation, owing to the distributional identity $\square A=\mathbb{P}J$.
\item[(vii)] We start by observing that
$$\nabla\wedge J=\IM(\nabla\bar{\psi}\wedge\nabla\psi)-\nabla|\psi|^2\wedge A-B|\psi|^2,$$
which yields
\begin{equation}\label{gic1}
	\nabla\wedge J+\rho B=\IM(\nabla\bar{\psi}\wedge\nabla\psi)-2\sqrt{\rho}\,\nabla\sqrt{\rho}\wedge A.
	\end{equation}
On the other hand, we have
\begin{equation}\label{gic2}
2\nabla\sqrt{\rho}\wedge\Lambda=2\nabla\sqrt{\rho}\wedge\IM(\bar{\varphi}\nabla\psi)-2\nabla\sqrt{\rho}\wedge(A\sqrt{\rho}).
\end{equation}
Moreover, we have the chain of identities
\begin{equation}\label{gic3}
	\begin{split}
\IM(\nabla\bar{\psi}\wedge\nabla\psi)&=2\IM(\bar{\varphi}\nabla\sqrt{\rho}\wedge\sqrt{\rho}\,\nabla\phi)=2\nabla\sqrt{\rho}\wedge\IM(\bar{\varphi}\nabla\psi).
	\end{split}
\end{equation}
Combining \eqref{gic1}, \eqref{gic2} and \eqref{gic3} we obtain the generalized irrotationality condition \eqref{eq:gen_irr}.
\end{itemize}
The proof is complete.
\end{proof} 

Next we consider the stability result. Here the gain of regularity provided by the local-smooting estimate \eqref{eq:losmoMS} will be crucial. Though it is not difficult to prove the weak $H^1$-continuity of the map $\psi\mapsto\sqrt{\rho}$, a similar result for $\Lambda$ in fact fails to be true. Fix indeed an open, finite ball $B\subseteq\R^3$, and consider e.g.~the sequences $A_n=0$, $\psi_n=n^{-1}e^{inx_1}\in H^1(B)$. It is clear that $\psi_n\to 0$ in $L^2(B)$, and moreover $\nabla\psi_n=ie^{inx_1}\mathbf{e}_1$ weak converges to $0$ in $L^2(B)$. However, for every $n\in\N$, 
$$\Lambda_n=\IM(\bar{\phi}_n\nabla_{A_n}\psi_n)=\IM(e^{-inx_1}\cdot ie^{inx_1}\mathbf{e}_1)=\mathbf{e}_1,$$
and then $\Lambda_n\to \mathbf{e}_1\neq 0$ in $L^2(B)$. Estimate \eqref{eq:losmoMS} will provide enough compactness in order to recover the weak stability of $\Lambda$, and to prove the strong stability of the Lorentz force as well.

We will also need a classical compactness result, the Aubin-Lions Lemma (see e.g.~\cite[Section 7.3]{Roubicek_NPDEbook}).
\begin{lemma}\label{le:aubin}
	Let $\mathcal{X},\mathcal{Y}$ and $\mathcal{Z}$ be Banach spaces such that $\mathcal{X}\hookrightarrow \mathcal{Y}$ compactly and $\mathcal{Y}\hookrightarrow \mathcal{Z}$. Let us fix moreover $T>0$, $p,q\in[1,\infty)$, and let $U\subseteq L_T^p\mathcal{X}\cap W_T^{1,q}\mathcal{Z}$ be a bounded set in $L_T^p\mathcal{X}$. Then $U$ is relatively compact in $L_T^p\mathcal{Y}$.
\end{lemma}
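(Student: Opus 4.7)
The plan is to follow the classical Aubin-Lions-Simon strategy, combining the compact embedding $\mathcal{X}\hookrightarrow\mathcal{Y}$ with the time regularity provided by the $W_T^{1,q}\mathcal{Z}$-bound on $U$ (which I interpret the statement as implicitly assuming). The key preliminary ingredient is the Ehrling-Nirenberg interpolation inequality: for every $\eps>0$ there exists $C_\eps>0$ such that
\begin{equation*}
\|u\|_\mathcal{Y}\le\eps\|u\|_\mathcal{X}+C_\eps\|u\|_\mathcal{Z},\qquad u\in\mathcal{X}.
\end{equation*}
This follows by contradiction: negating it yields a sequence $\{u_n\}\subseteq\mathcal{X}$ with $\|u_n\|_\mathcal{Y}=1$, $\|u_n\|_\mathcal{X}\le C$ and $\|u_n\|_\mathcal{Z}\to 0$. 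The compact embedding $\mathcal{X}\hookrightarrow\mathcal{Y}$ extracts a subsequence converging in $\mathcal{Y}$ to some $u$, while the continuous embedding $\mathcal{Y}\hookrightarrow\mathcal{Z}$ forces $u=0$, contradicting $\|u\|_\mathcal{Y}=1$.

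With Ehrling at hand, I would invoke the vector-valued Riesz-Fr\'echet-Kolmogorov criterion due to Simon: a bounded $U\subseteq L_T^p\mathcal{Y}$ is relatively compact provided (a) for all $0<t_1<t_2<T$ the set of time-averages $\bigl\{\int_{t_1}^{t_2}u(t)\,dt:u\in U\bigr\}$ is relatively compact in $\mathcal{Y}$, and (b) $\sup_{u\in U}\|\tau_h u-u\|_{L^p(0,T-h;\mathcal{Y})}\to 0$ as $h\to 0^+$, where $\tau_h$ denotes translation in time. Condition (a) is immediate from the $L_T^p\mathcal{X}$-bound: by H\"older's inequality the time-averages are uniformly bounded in $\mathcal{X}$, hence precompact in $\mathcal{Y}$ by the compact embedding.

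For condition (b), applying Ehrling pointwise in time and integrating yields
\begin{equation*}
\|\tau_h u-u\|_{L_T^p\mathcal{Y}}\le\eps\|\tau_h u-u\|_{L_T^p\mathcal{X}}+C_\eps\|\tau_h u-u\|_{L_T^p\mathcal{Z}}.
\end{equation*}
The first summand is bounded by $2\eps\sup_{u\in U}\|u\|_{L_T^p\mathcal{X}}$, which is arbitrarily small by the choice of $\eps$. For the second summand I write $u(t+h)-u(t)=\int_t^{t+h}\partial_s u(s)\,ds$ and use the $L_T^q\mathcal{Z}$-bound on $\partial_t u$ (from $U\subseteq W_T^{1,q}\mathcal{Z}$), together with H\"older's inequality when $q>1$ or equi-integrability of $\|\partial_s u\|_\mathcal{Z}$ when $q=1$, to get $\|\tau_h u-u\|_{L_T^p\mathcal{Z}}\to 0$ as $h\to 0^+$, uniformly over $U$. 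The only real technical point is performing the $\eps$-$C_\eps$ trade-off uniformly over $U$: one first fixes $\eps$ small enough to absorb the $L_T^p\mathcal{X}$ contribution, then chooses $h$ small (depending on $\eps$) to shrink the $L_T^p\mathcal{Z}$ contribution below the same tolerance.
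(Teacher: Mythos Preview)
The paper does not prove this lemma; it merely states it as a classical result and cites \cite[Section 7.3]{Roubicek_NPDEbook}. Your sketch follows the standard Aubin--Lions--Simon route (Ehrling's inequality plus Simon's Riesz--Fr\'echet--Kolmogorov criterion) and is essentially correct.

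One small imprecision worth flagging: in the case $q=1$, mere boundedness of $\{\partial_t u\}$ in $L_T^1\mathcal{Z}$ does \emph{not} imply equi-integrability of $\|\partial_s u\|_\mathcal{Z}$, so the remark you make there does not work as stated. The fix is elementary: from the uniform $L_T^p\mathcal{X}$-bound and the $W_T^{1,1}\mathcal{Z}$-bound one first deduces a uniform $L_T^\infty\mathcal{Z}$-bound on $U$ (pick $s_0$ with $\|u(s_0)\|_\mathcal{Z}$ controlled by the $L_T^p$-average, then use $\|u(t)-u(s_0)\|_\mathcal{Z}\le\|\partial_t u\|_{L_T^1\mathcal{Z}}$). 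Then estimate
\[
\|u(t+h)-u(t)\|_\mathcal{Z}^p \le \bigl(2\|u\|_{L_T^\infty\mathcal{Z}}\bigr)^{p-1}\int_t^{t+h}\|\partial_s u(s)\|_\mathcal{Z}\,ds,
\]
integrate in $t$, and apply Fubini to obtain $\|\tau_h u-u\|_{L_T^p\mathcal{Z}}^p\lesssim h$, uniformly over $U$. With this adjustment your argument goes through for all $q\in[1,\infty)$.
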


We are ready to prove the stability result.

\begin{proof}[Proof of Theorem \ref{th:stability}]
Observe preliminary that $\big(\psi_0^{(n)},A_0^{(n)},A_1^{(n)}\big)$ weak converges, up to a subsequence, to some initial data $(\psi_0,A_0,A_1)$ in $\Sigma^{1,\sigma}$. Let $\big(\psi^{(n)},A^{(n)}\big)$ be the global, weak $M^{1,1}$-solution to the non-linear Maxwell-Schr\"odinger \eqref{eq:MS} with initial data $\big(\psi_0^{(n)},A_0^{(n)},A_1^{(n)}\big)$, associated to  $(\rho^{(n)},J^{(n)},E^{(n)},B^{(n)})$ through formulas \eqref{eq:mad} and \eqref{eq:em_f}. In view of estimate \eqref{eq:unip}, for any $T>0$ we have the uniform bound
\begin{equation}\label{equb}
\big\|\big(\psi^{(n)},A^{(n)},\partial_t A^{(n)}\big)\big\|_{L_T^{\infty}M^{1,1}}\lesssim_T \big\|\big(\psi_0^{(n)},A_0^{(n)},A_1^{(n)}\big)\big\|_{M^{1,1}}\lesssim_T 1.
\end{equation}
As a consequence (see e.g.~the proof of \cite[Proposition 4.3]{ADM}), one can deduce the existence of a global, weak $M^{1,1}$-solution $(\psi,A)$ to the system \eqref{eq:MS} with initial data $(\psi_0,A_0,A_1)$, such that for every $T>0$ 
$$\big(\psi^{(n)},A^{(n)},\partial_t A^{(n)}\big)\to(\psi,A,\partial_tA)\quad\mbox{weak* in }L_T^{\infty}M^{1,1}.$$
Using the uniform bound \eqref{equb}, estimate \eqref{eq:apriori_uno}, and interpolating the smoothing estimate \eqref{eq:losmoMS} with the $L_T^{\infty}H^1$-bound, we deduce the following properties:
\begin{enumerate}
\item the sequences $\|(A^{(n)},\partial_tA^{(n)})\|_{L_T^{\infty}\Sigma^{\widetilde{\sigma}}}$, $\|\psi^{(n)}\|_{L_T^2W^{1/2,6}}$ are uniformly bounded, for some $\widetilde{\sigma}\in(1,\frac76)$;
	\item there exits $\delta>0$ such that the sequence $\|\chi_{\alpha}\psi^{(n)}\|_{\ell_{\alpha}^{\infty}L_T^4H^{1+\delta}}$ is uniformly bounded.
\end{enumerate}
Finally, let $(\rho,J,E,B)$ be the hydrodynamic variables associated to $(\psi,A)$ through formulas \eqref{eq:mad} and \eqref{eq:em_f}. In view of the proof of Theorem \ref{th:main}, the quadruple $(\rho,J,E,B)$ is a global, bounded energy solution to the QMHD system \eqref{eq:qmhd}. We are now able to prove the  stability of both the hydrodynamic variables and the Lorentz force.

\textbf{(i)~stability of the hydrodynamic variables}. Let us fix $T>0$ and an arbitrary open, bounded set $\Omega\subseteq\R^3$. Using the local smoothing property (2), the compact embedding $H^{1+\delta}(\Omega)\hookrightarrow H^1(\Omega)$, and the fact that $\partial_t\psi^{(n)}\in L_T^{\infty}H^{-1}$, the Aubin-Lions Lemma \ref{le:aubin} implies 
\begin{equation}\label{eq:ssal}
	\psi^{(n)}\to\psi\mbox{ in }L_T^4H^1(\Omega).
\end{equation}
The strong continuity result \eqref{eq:ssp} for the Madelung transform then guarantees that
\begin{equation}\label{key1}
	\sqrt{\rho^{(n)}} \to \sqrt{\rho}\quad\mbox{in }L_T^4H^1(\Omega),\qquad \Lambda^{(n)} \to \Lambda\quad\mbox{in }L_T^4L^2(\Omega).
\end{equation}
Moreover, since the sequence $\|\psi^{(n)}\|_{L_T^{\infty}H^1}$ is bounded, then also $\|\sqrt{\rho^{(n)}}\|_{L_T^{\infty}H^1}$ and $\|\Lambda^{(n)}\|_{L_T^{\infty}L^2}$ are bounded, in view of the relation
$$\|\nabla\psi^{(n)}\|_{L^2}=\|\nabla\sqrt{\rho^{(n)}}\|_{L^2}+\|\Lambda^{(n)}\|_{L^2}.$$
As a consequence, the limits in \eqref{key1} actually hold as weak* limits in $L_T^{\infty}H^1$ and $L_T^{\infty}L^2$, respectively.

Finally, the weak* convergence $(E^{(n)},B^{(n)})\rightharpoonup(E,B)$ in $L_T^{\infty}(L^2\times L^2)$ easily follows from formula \eqref{eq:em_f} for the electromagnetic field and the weak* convergence $|\psi^{(n)}|^2\rightharpoonup |\psi|^2$ in $L_T^{\infty}L^{3/2}\hookrightarrow L_T^{\infty}H^{-1}$, which in turn follows by the boundedness in $L_T^{\infty}L^{3/2}$ of $|\psi^{(n)}|^2$ and the strong convergence \eqref{eq:ssal}.

\textbf{(ii)~stability of the Lorentz force}. Let us fix $T>0$ and an arbitrary open, bounded set $\Omega\subseteq\R^3$. Using the Aubin-Lions Lemma \ref{le:aubin}, property (1), the compact embedding $\Sigma^{\widetilde{\sigma}}(\Omega)\hookrightarrow \Sigma^1(\Omega)$, and the fact that $(\partial_tA^{(n)},\partial_{tt}A^{(n)})\in L_T^{\infty}(L^2\times H^{-1})$, we obtain the strong convergence
\begin{equation}\label{eq:saab}(A^{(n)},\partial_t A^{(n)})\to (A,\partial_t A)\mbox{ in }L_T^4\Sigma^1(\Omega).
\end{equation}
Moreover, we deduce from \eqref{eq:ssal} that $\rho^{(n)}\to\rho$ and  $\nabla\phi^{(n)}\to\nabla\phi$ in $L_T^2L^2(\Omega)$. Combining everything, we get 
\begin{equation*}\label{conv_rho}
\rho^{(n)}E^{(n)}=-\rho^{(n)}(\nabla\phi^{(n)}+\partial_tA^{(n)})\to -\rho(\nabla\phi+\partial_tA)=\rho E\mbox{  in }L_T^1L^1(\Omega).
\end{equation*}

We are left to show the stability of the term $J\wedge B$. To this aim, let us fix $p\in (2,\frac{6}{5-2\widetilde{\sigma}})$, and let $q\in(2,\infty)$, $r\in(1,3)$ be such that $\frac1p+\frac1q=\frac12$ and $\frac1p+\frac1r=\frac56$. Using property (1), the Aubin-Lions Lemma \ref{le:aubin} yields $\psi^{(n)}\to\psi$ in $L_T^{2}L^{q}(\Omega)$ and $B^{(n)}\to B$ in $L_T^4L^{p}(\Omega)$, owing to the compact embedding $W^{1/2,6}(\Omega)\hookrightarrow L^{q}(\Omega)$ and $H^{\widetilde{\sigma}-1}\hookrightarrow L^p$ respectively. Moreover, \eqref{eq:saab} and \eqref{eq:ssal} yield
\begin{itemize}
	\item[-] $A^{(n)}\to A$ in $L^4L^6(\Omega)$, $\nabla\psi^{(n)}\to \nabla\psi$ in $L^4L^2(\Omega)$, $\rho^{(n)}\to\rho$ in $L_T^{2}L^{r}(\Omega)$.
	\end{itemize}
Combining everything we obtain
$$J^{(n)}=\IM\big(\overline{\psi^{(n)}}\nabla\psi^{(n)}\big)-\rho^{(n)}A^{(n)}\to \IM\big(\overline{\psi}\nabla\psi\big)-\rho A=J\mbox{ in } L_T^{4/3}L^{p'}(\Omega),$$
which together with the strong convergence $B^{(n)}\to B$ in $L_T^4L^{p}(\Omega)$ implies that $J^{(n)}\wedge B^{(n)}\to J\wedge B$ in $L_T^1L^1(\Omega)$. The proof is complete.
\end{proof}

\end{document}